\numberwithin{equation}{section}
\newtheorem{theorem}{Theorem}
\newtheorem{proposition}
{Proposition}
\newtheorem{lemma}{Lemma}[section]
\newtheorem{cor}{Corollary}
\theoremstyle{definition}
\newtheorem{remark}{Remark}[section]
\renewcommand{\epsilon}{\varepsilon}
\def\Id{\text{\rm Id}}
\def\N{\mathbb{N}}
\def\Z{\mathbb{Z}}
\def\R{\mathbb{R}}
\subjclass[2010]{Primary: 34D09, Second: 37H15}
\keywords{random dynamical system; tempered exponential dichotomy; measurable admissibility;
roughness; infinite-dimension.}
\begin{document}

\title[Description of tempered exponential dichotomies]
{Description of tempered exponential dichotomies by admissibility with no Lyapunov norms
}

\begin{abstract}
Tempered exponential dichotomy formulates the nonuniform hyperbolicity for random dynamical systems.
It was described by admissibility of a pair of function classes defined with Lyapunov norms,
For MET-systems (systems satisfying the assumptions of multiplicative ergodic theorem (abbreviated as MET)),
it can be described by admissibility of a pair without a Lyapunov norm.
However,
it is not known how to choose a suitable Lyapunov norms before a tempered exponential dichotomy is given,
and
there are examples
of random systems which are not MET-systems but have a tempered exponential dichotomy.
In this paper we give a description of tempered exponential dichotomy
for general random systems, which may not be MET-systems,
purely by measurable admissibility of three pairs of function classes with no Lyapunov norms.
Further, restricting to the MET-systems,
we obtain a simpler description of only one pair with no Lyapunov norms.
Finally, we use our results to prove the roughness of tempered exponential dichotomies
for parametric random systems
and give a H\"older continuous dependence of the associated projections on the parameter.
\end{abstract}

\author{Davor Dragi\v cevi\' c}
\address{Department of Mathematics, University of Rijeka, Croatia}
\email[Davor Dragi\v cevi\' c]{ddragicevic@math.uniri.hr}

\author{Weinian Zhang}
\address
{School of Mathematics,
    Sichuan University,
    Chengdu, Sichuan 610064, P.R.China }
\email[Weinian Zhang]{matzwn@126.com, wnzhang@scu.edu.cn}

\author{ Linfeng Zhou}
\thanks{Corresponding author: Linfeng Zhou (zlf63math@163.com, zlf63math@scu.edu.cn)}
\address
{School  of Mathematics,
    Sichuan University,
    Chengdu, Sichuan 610064, P.R.China}
\email[Linfeng Zhou]{zlf63math@163.com, zlf63math@scu.edu.cn}
\maketitle

\parskip 0.2cm


\section{Introduction}

Consider the homogeneous linear differential or difference equation
\begin{eqnarray}
\label{lnde}
x'(t)\!\!\!&=&\!\!\!A(t)x(t), \quad t\in  J,\ \ \  {\rm or}
\\
\label{lnde-dis}
x(n+1)\!\!\!&=&\!\!\!A(n)x(n), \quad n\in J,
\end{eqnarray}
on  a Banach space $X$, where $A(s)$ is a  linear operator on $X$ for each $s\in J$ and
$J$ is either
$\mathbb{T}$ ($\mathbb{R}$ or $\mathbb{Z}$), $\mathbb{T}_{+}$ or $\mathbb{T}_{-}$.
These equations remain attractive in research because they are linearization of
nonlinear systems along a specific orbit and some of its basic problems are not answered well.
One of those problems is exponential dichotomy,
which formulates the hyperbolicity of the equilibrium at the origin for Eq.(\ref{lnde}) or Eq.(\ref{lnde-dis}).
A significant question is: How to give existence of an exponential dichotomy?
In addition to the method of finding spectral gaps,
for example,
from the spectrum of the linear part of autonomous systems
(\cite{Henry-book-1981}),
the distribution of Sacker-Sell spectrum of nonautonomous systems (\cite{Sacker-Sell-1994})
and Lyapunov exponents of linear cocycles (\cite{Arnold,Lian-Lu}),
another method is input-output test with admissibility of function classes.
A pair of function classes $(\mathcal{F},\mathcal{E})$ is said to be {\it admissible}
for  Eq.(\ref{lnde}) (resp. Eq.(\ref{lnde-dis}))
if for each input function $f\in\mathcal{F}$,
the equation
\begin{eqnarray}\label{non-lnde}
x'(t)\!\!\!&=&\!\!\!A(t)x+f(t), \quad t\in  J,
\\
({\rm resp.} \ \ \
\label{non-lnde-dis}
x(n+1)\!\!\!&=&\!\!\!A(n)x(n)+f(n+1), \quad n\in J,)
\end{eqnarray}
has a solution in $\mathcal{E}$.
For convenience,
we simply call $\mathcal{F}$ and $\mathcal{E}$ an {\it input class} and an {\it output class} respectively.
The concept of admissibility is of special significance because it can be used to find solutions that belong to a  special class
(e.g., periodic solution, homoclinic solution or heteroclinic solution)
for nonlinear systems, particularly in connection with  invariant manifolds
(see~\cite{Arnold,Duan-Lu-Schmalfuss,Mohammed-Zhang-Zhao,Lian-Lu}) and
an analysis of bifurcations
 (see~\cite{Palmer,Hale-Lin,Gruendler,Zhu-Zhang}).
It is also useful in the study of the persistence of hyperbolic behaviour under small linear perturbations
(see e.g. \cite{Palmer2,BVDbook}).

The study of the relationship between dichotomy and admissibility
has a long history, which goes back to Perron's pioneering work \cite{Perro-1930}
for ODEs.
In 1954, Ma${\rm\breve{i}}$zel$'$ (\cite{Maizel-1954}) proved that
Eq.(\ref{lnde}) on $\mathbb{R}^{d}$, where $A\colon \mathbb{R}_+\to M_d$ (the space of all real $d\times d$ matrices)
is  a bounded  and continuous function,
has an exponential dichotomy on $\mathbb{R}_{+}$
if and only if
for each input function $f\in C_{b}(\mathbb{R}_{+})$  Eq.(\ref{non-lnde})
has a solution in $C_{b}(\mathbb{R}_{+})$, the class of
all bounded continuous functions defined on $\mathbb{R}_{+}$ and valued in $\mathbb{R}^d$.
A landmark work~\cite{Massera-Schaffer-book-1966} was made by
Massera and Sch\"affer,
who considered Eq.\eqref{lnde} with $J=\mathbb R_+$ on a Banach space
and
proposed an axiomatic approach for the construction of possible pairs of input and output classes with the property that the admissibility with respect to those pairs is equivalent to the notion of an exponential dichotomy.
In \cite{Daleckij-Krein-book-1974}, Daleckij and Krein
discussed Eq.\eqref{lnde} on a Banach space with $J=\mathbb R$ such that $\sup_{t\in \mathbb{R}}\int_{t}^{t+1}\|A(\tau)\|d\tau<+\infty$
and proved that
Eq.\eqref{lnde} has an exponential dichotomy on $\mathbb{R}$
if and only if the pair $(C_{b}(\mathbb{R},C_{b}(\mathbb{R}))$
is admissible with the uniqueness, i.e.,
the output solution $x$ given in the above definition of admissibility is unique.
For convenience, we call the admissibility with the uniqueness the {\it proper admissibility} simply.
Additional related results were obtained by Coppel~\cite{Coppel-book-1978} in the case that $A$ is not necessarily a bounded function. In addition, Coppel (\cite[Proposition 2.]{Coppel-book-1978}) dealt with the case that
Eq.\eqref{lnde} does not even have the so-called bounded growth property.
An appropriate admissibility conditions under which Eq.\eqref{lnde} has an exponential dichotomy on both $\mathbb{R}_{+}$ and $\mathbb R_{-}$ (but not necessarily on $\mathbb R$)
were given by Pliss~\cite{Pliss-1977} and Palmer~\cite{Palmer88}.
For related results that deal with the case of noninvertible dynamics, we refer to~\cite{Henry-book-1981, Lin86, Rodr-R95, ZWN95, Chicone-Latushkin-book-1999,BVDbook}. We emphasize that these results correspond to the case that
the operators $A(t)$ in Eq.\eqref{lnde}
are not necessarily bounded, which is typical in the context of parabolic equations.

This paper focuses on discrete-time systems.
In 1967,
Coffman and Sch\"affer (\cite{Coffman-Schaffer-1967}) considered Eq.(\ref{lnde-dis})
on a Banach space $X$ without the invertibility assumption for operators $A(n)$.
They proved that system (\ref{lnde-dis})
has an exponential dichotomy on $\mathbb{Z}_{+}$
if the pair $(\ell^{\infty}(\mathbb{Z}_{+}),\ell^{\infty}(\mathbb{Z}_{+}))$ is admissible,
where $\ell^{\infty}(\mathbb{Z}_{+})$ consists of all bounded sequences in $X$.
Henry (\cite{Henry-book-1981}) obtained the result on $\Z$ with the admissible condition that
 $(\ell^{\infty}(\mathbb{Z}),\ell^{\infty}(\mathbb{Z}))$ is properly admissible.
For more recent results, we refer to~\cite{Huy-Minh-2001,megan-sasu-sasu-2003-DCDS,sasu-sasu-06-01,C-Preda-Onofrei-2018}.
For nonautonomous systems, it is of interest to look for weaker concepts of exponential dichotomy
in which the rates of contraction (along the stable direction) and  of expansion (along the unstable direction)
are allowed to depend on time.
This yields a notion of {\it nonuniform exponential dichotomy} (\cite{Barreira-Valls-book-2008}),
in which the bound depends on the initial time.
Barreira, Dragi${\rm\check{c}}$evi\'c and Valls (\cite{Barreira-Dragicevic-Valls-2014-02})
proved that
Eq.(\ref{lnde-dis}) on $X$
has a nonuniform exponential dichotomy on $\mathbb{Z}$
with a bound being an exponential function
if there is a sequence of norms $(|\cdot|_{n})_{n\in \mathbb{Z}}$ of $X$ (called Lyapunov norms)
such that the pair $(\ell^{\infty}(\mathbb{Z}_{+},\sup_{n\in \mathbb{Z}}|\cdot|_{n}),\ell^{\infty}(\mathbb{Z}_{+},\sup_{n\in \mathbb{Z}}|\cdot|_{n}))$ is properly admissible,
where $\ell^{\infty}(\mathbb{Z}_{+},\sup_{n\in \mathbb{Z}}|\cdot|_{n})$ consists
of all sequences $(x(n))_{n\in \mathbb{Z}}$ in $X$ such that $\sup_{n\in \mathbb{Z}}|x(n)|_{n}<\infty$.
Related results (\cite{Barreira-Dragicevic-Valls-2014-01,ZhouLuZhang17JDE}) were also obtained for continuous-time systems with Lyapunov norms.
We notice that
it is difficult to construct Lyapunov norms before one already knows that the system has a nonuniform exponential dichotomy.
An effort with no Lyapunov norms was made to Eq.(\ref{lnde-dis}) on $\mathbb{R}^{d}$ in \cite{ZhouZhang16JFA},
where
under an additional assumption on the weighted-stable subspaces and the weighted-unstable subspaces
two admissible pairs of function classes are employed to describe nonuniform exponential dichotomies
on $\mathbb{Z}$, $\mathbb{Z}_{+}$ and $\mathbb{Z}_{-}$ separately
or
on both $\mathbb{Z}_{+}$ and $\mathbb{Z}_{-}$ simultaneously
with a bound being an exponential function.
Up to now,
it is open to describe nonuniform exponential dichotomies
by admissibility of function classes
with neither a Lyapunov norm nor the additional assumption of \cite{ZhouZhang16JFA}.

Such an investigation on description of dichotomy by admissibility
was also extended to
cocycles.
More precisely,
some results (see e.g. \cite{Chow-Leiva-1995,LSa,MSS,PPC,Sasu-Sasu-2016})
were obtained for uniform exponential dichotomies of skew-product flows,
each of which is a strongly continuous linear cocycle over a continuous flow.
For random dynamical systems,
each of which is a strongly measurable linear cocycle over a measurable flow,
another type of nonuniform exponential dichotomy, called {\it tempered exponential dichotomy},
was dealt with (see e.g. \cite{Gundlach-1995,Cong-book-1997,Arnold,Lian-Lu}),
where `tempered' means that the bound of dichotomy varies subexponentially along each orbit of the measurable flow
in the base space.
In 2016 Barreira, Dragicevic and Valls (\cite{Barreira-Dragicevic-Valls-2016}) proved that
tempered exponential dichotomy can be described by admissibility
of an appropriate pair of function classes defined with a Lyapunov norm,
which answers to the question about equivalent relationship raised in Remark 1 of \cite{ZhouLuZhang13JDE}.
Later,
Alhalawa and Dragi\v cevi\' c (\cite{Alhalawa-Dragicevic-2017}) gave
a description of tempered exponential dichotomies
by an admissibility with no Lyapunov norms
for MET-systems with a compact generator,
where MET-system is a short name
for random dynamical systems satisfying the assumptions of the Multiplicative Ergodic Theorem (abbreviated as MET)
given in \cite{Oseledets,Arnold,Lian-Lu,Gonzalez-Tokman-Quas-2014}.
However,
it is not known how to choose a suitable Lyapunov norms before a tempered exponential dichotomy is given,
and
an example given in \cite[pp.4044-4045]{ZhouLuZhang13JDE} (also seen in our Remark \ref{re-examp}) shows that
there exists random dynamical systems which are not MET-systems but have a tempered exponential dichotomy.

In this paper we purely use admissibility of function classes with no Lyapunov norms
to give a complete description of tempered exponential dichotomies
for general random systems on a Banach space $X$, which may not be MET-systems.
In section~\ref{P},  we recall the concept of tempered exponential dichotomy,
define several classes of weighted-bounded sequences, which
are constructed in terms of the original norm of $X$ but with no Lyapunov norms
and will play an important role in our results,
and introduce the concept of measurable admissibility.
In section~\ref{T1},
we prove that the existence of a tempered exponential dichotomy implies admissibility of {\it three pairs}
of function classes.
In section~\ref{T2} we conversely
use the three pairs
to describe the existence of a tempered exponential dichotomy.
More precisely, one of the three pairs
is employed to decompose the phase space into direct sum of the stable subspace and the unstable one,
and the other two
are applied to estimating the decay rate in the stable subspace and the growth rate in the unstable subspace separately.
In comparison with \cite{ZhouZhang16JFA},
which needs two admissible pairs plus an assumption on the weighted-stable subspaces
and the weighted-unstable subspaces,
our Theorem \ref{theorem-admi-texd}
purely uses three admissibility pairs but does not need the assumption
 even if we ignore `random'.
In section~\ref{T3},
restricting to MET-systems,
we obtain a simpler description of only one pair with no Lyapunov norms.
Using an induced system to estimate
the upper (resp. lower) bound of the Lyapunov exponents for the random system restricted on the stable (resp. unstable) subspace in Section~\ref{T3}
 is partially inspired by those developed in~\cite{Dragicevic-Sasu-Sasu} for uniform exponential dichotomy.
However, the induced system constructed in \cite{Dragicevic-Sasu-Sasu}
can not be used in our case because of the nonuniformity.
Section~\ref{T4} is devoted to application.
First, we apply our result obtained in section 4
to giving the roughness of tempered exponential dichotomies
for parametric random systems
and give a H\"older continuous dependence of the associated projections on the parameter.
Then, we apply our approach made in section~\ref{T2} to
giving a corresponding description of
dichotomy
for the deterministic difference equation (\ref{lnde-dis}).


\section{Preliminaries}\label{P}

In this section we recall the concept of tempered exponential dichotomy,
define several classes of weighted-bounded sequences, and introduce the concept of measurable admissibility.


\subsection{Tempered exponential dichotomy}
 \

Throughout this paper,
$\mathbb{Z}_{+}$ (resp. $\Z_{-}$)
denotes the set of nonnegative (resp. nonpositive) integers,
$\N$ denotes the set of positive integers,
$(\Omega, \mathcal{F}, \mathbb{P})$ denotes a measure space, $X=(X, |\cdot |)$ is a separable
Banach space,
and $\mathcal{L}(X)$ denotes the Banach space of all bounded linear
operators on $X$.
Furthermore, let $\theta \colon \Omega \to \Omega$ be an invertible measurable transformation that preserves $\mathbb P$.
This means that $\mathbb P(\theta^{-1}(E))=\mathbb P(E)$ for each $E\in \mathcal F$.
A map $\Phi \colon \mathbb{Z}_+\times \Omega \to \mathcal{L}(X)$ is said to be a  \emph{(strongly) measurable linear cocycle}
over $\theta$ (\cite{Arnold})
if
\begin{itemize}
\item $\Phi(0,\omega)=\Id$ for $\omega \in \Omega$, where $\Id$ denotes the identity operator on $X$;
\item $\Phi(n+m,\omega)=\Phi(m,\theta^n (\omega))\Phi(n,\omega)$ for $\omega \in \Omega$ and $n, m\in \mathbb{Z}_{+}$;
\item the map $\omega \mapsto \Phi(n,\omega)$ is strongly measurable for any fixed $n \in \mathbb{Z}_{+}$, i.e. $\omega \mapsto \Phi(n,\omega)x$ is measurable for each $x\in X$.
\end{itemize}
A \emph{ linear random dynamical system} (\cite{Arnold}) on $X$ over $\theta$ is a measurable map
$$
\phi: \mathbb{Z}_+ \times \Omega \times X \to X, \quad (n, \omega, x) \mapsto \phi(n, \omega, x),
$$
such that the map  $(n, \omega) \mapsto \phi(n,\omega,\cdot) \in \mathcal L(X)$
 forms a linear cocycle over $\theta$.
The measurability of $\phi$ implies that the induced linear cocycle is strongly measurable.
Conversely, it is clear that for a given measurable linear cocycle $\Phi$ over $\theta$, the map
$\mathbb{Z}_+ \times \Omega \times X \ni (n, \omega, x) \mapsto \Phi(n, \omega)x$
defines a linear random dynamical system on $X$ over $\theta$. Consequently,
from now on we will use ``linear random dynamical system" and  ``measurable linear cocycle" synonymously.
Let $\Phi$ be a measurable linear cocycle over $\theta$. Set
\[
A(\omega):=\Phi(1,\omega), \quad \omega \in \Omega.
\]
Then
$\omega \mapsto A(\omega)x$ is measurable for each fixed $x\in X$.
We say that $A$ is the \emph{generator} of the cocycle $\Phi$.
 A random variable
 $K\colon \Omega \to (0,+\infty)$ is said to be \emph{tempered}
 with respect to $\theta$ (\cite{Arnold})
  if
\begin{equation}\label{temp}
  \lim_{n\rightarrow
  \pm\infty}\frac{\ln(K(\theta^{n}\omega))}{n}=0, \quad \text{for $\mathbb P$-a.e. $\omega \in \Omega$.}
\end{equation}

We now recall the concept of tempered exponential dichotomy (see~\cite{Arnold,Lian-Lu}).
Let  $\Phi$ be a measurable linear cocycle over $\theta$. We say that $\Phi$ has a \emph{tempered exponential dichotomy}
if there exists a strongly measurable map $\Pi^{s}: \widetilde{\Omega}\rightarrow \mathcal{L}(X)$
defined on a $\theta$-invariant subset $\widetilde{\Omega}\subset \Omega$ of full measure such that:
\begin{itemize}
\item[(i)] $\Pi^s(\omega)$ is a projection on $X$ for $\omega \in \widetilde{\Omega}$;
\item[(ii)]  for all $\omega\in \widetilde{\Omega}$ and $n\in \mathbb{Z}_{+}$,
\begin{equation}\label{inv} \Pi^{s}(\theta^{n}\omega)\Phi(n,\omega)=\Phi(n,\omega)\Pi^{s}(\omega);\end{equation}
\item[(iii)]
 the restriction
 $\Phi(n,\omega)|_{\mathcal{R}(\Pi^{u}(\omega))}$
to the range $\mathcal{R}(\Pi^{u}(\omega))$ of $\Pi^{u}(\omega)$ is
an isomorphism from $\mathcal{R}(\Pi^{u}(\omega))$ onto
$\mathcal{R}(\Pi^{u}(\theta^{n}\omega))$
 for each $\omega\in \widetilde{\Omega}$ and $n\in \mathbb{Z}_{+}$,
 and $\Phi(n,\omega)\Pi^{u}(\omega)$ is strongly measurable on $\omega$
 for $n\in \mathbb{Z}_{-}$,
 where $\Pi^{u}(\omega):=\Id-\Pi^{s}(\omega)$ and
  \[\Phi(n,\omega):=(\Phi(\theta^{n}\omega, -n)|_{\mathcal{R}(\Pi^{u}(\theta^{n}\omega))})^{-1} \quad  \text{for $n\in \mathbb{Z}_{-}$;}\]
\item[(iv)]
 there is a $\theta$-invariant random variable $\alpha:\Omega\rightarrow (0,+\infty)$ (i.e., $\alpha(\theta\omega)=\alpha(\omega)$ for $\omega \in \Omega$) and
 a tempered (with respect to $\theta$) random variable
 $K:\Omega\rightarrow (0,+\infty)$
such that
 \begin{eqnarray}
 \label{ted1}
 &&\|\Phi(n,\omega)\Pi^{s}(\omega)\| \leq
 K(\omega)e^{-\alpha(\omega)n}, \quad
 \forall~ \omega\in \widetilde{\Omega},n\in \mathbb{Z}_{+},
\\
\label{ted2}
 &&\|\Phi(n,\omega)\Pi^{u}(\omega)\| \leq
 K(\omega)e^{\alpha(\omega)n}, \quad
 \  \forall~  \omega\in\widetilde{\Omega},n\in \mathbb{Z}_{-}.
\end{eqnarray}
\end{itemize}

For convenience, we call $\alpha \colon \Omega \to (0, +\infty)$ and $K\colon \Omega \to (0, +\infty)$
the exponent and the bound of the tempered exponential dichotomy respectively.


\subsection{Some classes of weight-bounded sequences}
 \

We introduce some classes of weighted-bounded sequences, which are two-sided sequences in $X$.
Let $\beta, K \colon \Omega \to (0, +\infty)$ be  random variables. For $\omega \in \Omega$, let
\[
\ell^\infty_{1/K(\omega), \beta(\omega)}(\Z):=\bigg \{f=(f(n))_{n\in \Z}\subset X: \sup_{n\in \Z} \big (K(\theta^n \omega)e^{-\beta(\omega)n}|f(n)|\big )<+\infty \bigg \}.
\]
Then, $\ell^\infty_{1/K(\omega), \beta(\omega)}(\Z)$ is a Banach space equipped with
the norm
\[
\| f\|_{\ell^\infty_{1/K(\omega), \beta(\omega)}(\Z)}:= \sup_{n\in \Z} \big (K(\theta^n \omega)e^{-\beta(\omega)n}|f(n)|\big ), \quad f\in \ell^\infty_{1/K(\omega), \beta(\omega)}(\Z).
\]
Similarly, let
\[
\ell^\infty_{|1/K(\omega), \beta(\omega)|}(\Z):=\bigg \{f=(f(n))_{n\in \Z}\subset X: \sup_{n\in \Z} \big (K(\theta^n \omega)e^{-\beta(\omega) |n|}|f(n)|\big )<+\infty \bigg \},
\]
which is a Banach space equipped with the norm
\[
\| f\|_{\ell^\infty_{|1/K(\omega), \beta(\omega)|}(\Z)}:= \sup_{n\in \Z} \big (K(\theta^n \omega)e^{-\beta(\omega) |n|}|f(n)|\big ), \quad f\in \ell^\infty_{|1/K(\omega), \beta(\omega)|}(\Z).
\]
In the particular case $K\equiv 1$, we use $\ell^\infty_{1, \beta (\omega)}(\Z)$ and $\ell^\infty_{|1, \beta (\omega)|}(\Z)$
to denote $\ell^\infty_{1/K(\omega), \beta(\omega)}(\Z)$
 and $\ell^\infty_{|1/K(\omega), \beta(\omega)|}(\Z)$ respectively.
Furthermore, for a fixed $\theta$-invariant measurable set $\widetilde \Omega \subset \Omega$,
we consider
\begin{eqnarray*}
\ell_{1/K,\beta}^{\infty}(\widetilde{\Omega}\times \mathbb{Z})
 :=\{\!\!\!\!\!&&\!\!\!\!\! f:\widetilde{\Omega}\times \mathbb{Z}\rightarrow X:
      f(\omega,\cdot)\in \ell^\infty_{1/K(\omega), \beta(\omega)}(\Z), \forall\ \omega  \in \widetilde{\Omega}
      \},
\\
\ell_{|1/K,\beta|}^{\infty}(\widetilde{\Omega}\times \mathbb{Z})
 :=\{\!\!\!\!\!&&\!\!\!\!\! f:\widetilde{\Omega}\times \mathbb{Z}\rightarrow X:
      f(\omega,\cdot)\in \ell^\infty_{|1/K(\omega), \beta(\omega)|}(\Z), \forall\ \omega\in \widetilde{\Omega}
      \}.
\end{eqnarray*}


\subsection{Measurable admissibility}
 \

Let $\Phi$ be a measurable linear cocycle over $\theta$
with the generator $A$
and
let $\widetilde \Omega \subset \Omega$ be a $\theta$-invariant measurable set. We say that the pair $(\ell_{1/K,\beta}^{\infty}(\widetilde{\Omega}\times \mathbb{Z}),\ell_{1,\beta}^{\infty}(\widetilde{\Omega}\times \mathbb{Z}))$ is \emph{properly admissible} for $\Phi$ if
for each $f\in \ell_{1/K,\beta}^{\infty}(\widetilde{\Omega}\times \mathbb{Z})$
there exists a unique $x\in \ell_{1,\beta}^{\infty}(\widetilde{\Omega}\times \mathbb{Z})$
such that
\begin{eqnarray}\label{madm}
x(\omega, n+1)=A(\theta^{n}\omega)x(\omega, n)+f(\omega, n+1), \quad \forall\ n\in \Z, \omega\in \widetilde{\Omega}.
\end{eqnarray}
By the definitions of $\ell_{1/K,\beta}^{\infty}(\widetilde{\Omega}\times \mathbb{Z})$ and $\ell_{1,\beta}^{\infty}(\widetilde{\Omega}\times \mathbb{Z})$,
the proper admissibility of $(\ell_{1/K,\beta}^{\infty}(\widetilde{\Omega}\times \mathbb{Z}),\ell_{1,\beta}^{\infty}(\widetilde{\Omega}\times \mathbb{Z}))$ for $\Phi$ is equivalent to saying that
for every $\omega \in \widetilde \Omega$
and every
sequence $f=(f(n))_{n\in \Z}\in \ell^\infty_{1/K(\omega), \beta(\omega)}(\Z)$
there exists a unique $x=(x(n))_{n\in \Z}\in \ell^\infty_{1, \beta (\omega)}(\Z)$ such that
\begin{eqnarray}
 x(n+1)=A(\theta^{n}\omega)x(n)+f(n+1),
\quad \forall\ n\in \Z.
\label{eqn-NH}
\end{eqnarray}
In addition, we say that the pair $(\ell_{1/K,\beta}^{\infty}(\widetilde{\Omega}\times \mathbb{Z}),\ell_{1,\beta}^{\infty}(\widetilde{\Omega}\times \mathbb{Z}))$ is \emph{measurably properly admissible} for $\Phi$ if
the pair $(\ell_{1/K,\beta}^{\infty}(\widetilde{\Omega}\times \mathbb{Z}),\ell_{1,\beta}^{\infty}(\widetilde{\Omega}\times \mathbb{Z}))$
is properly admissible and
 for each $f\in \ell_{1/K,\beta}^{\infty}(\widetilde{\Omega}\times \mathbb{Z})$
measurable on $\omega$ (i.e., $\omega \mapsto f(\omega, n)$ is measurable for each $n\in \Z$)
the unique $x\in \ell_{1,\beta}^{\infty}(\widetilde{\Omega}\times \mathbb{Z})$ given by the proper admissibility is also measurable on $\omega$.
In an analogous manner, one can introduce the notion of (measurable) proper admissibility of the pair  $(\ell_{|1/K,\beta|}^{\infty}(\widetilde{\Omega}\times \mathbb{Z}),\ell_{|1,\beta|}^{\infty}(\widetilde{\Omega}\times \mathbb{Z}))$.


\section{Tempered dichotomy implies measurably admissibility}\label{T1}

In this section, we show that the tempered exponential dichotomy implies measurable proper admissibility of several pairs.

\begin{theorem}
\label{theorem-texd-admi}
 Suppose that $\Phi$
 is a measurable linear cocycle over $\theta$ with the generator $A$
  and
 has a tempered exponential dichotomy on $\widetilde{\Omega}\subset \Omega$
 with the exponent $\alpha \colon \Omega \to (0, +\infty)$
 and bound $K\colon \Omega \to (0, +\infty)$.
 Then for any $\theta$-invariant random variable $\beta \colon \Omega \to \R$
with the property $\beta (\omega)\in (-\alpha (\omega), \alpha (\omega))$ for $\omega \in \Omega$,
the followings hold:
\\
{\bf (i)}
the pair $(\ell_{1/K,\beta}^{\infty}(\widetilde{\Omega}\times \mathbb{Z}),\ell_{1,\beta}^{\infty}(\widetilde{\Omega}\times \mathbb{Z}))$
 is measurably properly admissible
 for $\Phi$,
 and additionally
 there is a $\theta$-invariant positive random variable $\gamma \colon \Omega \to (0, +\infty)$ such that
 for $\omega \in \widetilde \Omega$ and $f\in \ell_{1/K(\omega),\beta(\omega)}^{\infty}(\mathbb{Z})$,
\begin{equation}\label{bound}
\|x\|_{\ell_{1,\beta(\omega)}^{\infty}(\mathbb{Z})}\leq \gamma(\omega)\|f\|_{\ell_{1/K(\omega),\beta(\omega)}^{\infty}(\mathbb{Z})},
\end{equation}
where
$x\in \ell_{1,\beta(\omega)}^{\infty}(\mathbb{Z})$ is the (unique) solution  of~\eqref{eqn-NH};
\\
{\bf (ii)}
 the pair $(\ell_{|1/K,\beta|}^{\infty}(\widetilde{\Omega}\times \mathbb{Z}),\ell_{|1,\beta|}^{\infty}(\widetilde{\Omega}\times \mathbb{Z}))$
 is measurably properly admissible for $\Phi$, and additionally
 there is a $\theta$-invariant positive random variable $\widetilde{\gamma} \colon \Omega \to (0, +\infty)$ such that for $\omega \in \widetilde \Omega$ and $f\in \ell_{|1/K(\omega),\beta(\omega)|}^{\infty}(\mathbb{Z})$,
\begin{equation}\label{bound-2}
\|x\|_{\ell_{|1,\beta(\omega)|}^{\infty}(\mathbb{Z})}
 \leq \widetilde{\gamma}(\omega)\|f\|_{\ell_{|1/K(\omega),\beta(\omega)|}^{\infty}(\mathbb{Z})},
\end{equation}
where $x\in \ell_{|1,\beta(\omega)|}^{\infty}(\mathbb{Z})$ is the (unique) solution of~\eqref{eqn-NH}.
\end{theorem}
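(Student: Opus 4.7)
The plan is to construct the unique bounded solution explicitly via a dichotomy-adapted discrete Green's formula, verify the norm bound by summing geometric series, and settle uniqueness and measurability separately. For $\omega\in\widetilde\Omega$ and $f\in\ell^\infty_{1/K(\omega),\beta(\omega)}(\Z)$, I would set
\[
x(n):=\sum_{k\le n}\Phi(n-k,\theta^k\omega)\Pi^s(\theta^k\omega)f(k)\,-\,\sum_{k>n}\Phi(n-k,\theta^k\omega)\Pi^u(\theta^k\omega)f(k),
\]
reading $\Phi(n-k,\theta^k\omega)\Pi^u(\theta^k\omega)$ for $k>n$ through the inverse supplied by item (iii) of the dichotomy definition. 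The cocycle law combined with $\Pi^s+\Pi^u=\Id$ shows that $x$ satisfies \eqref{eqn-NH}: every index with $k\ne n+1$ telescopes by the cocycle identity, while the two boundary terms at $k=n+1$ reassemble into $\Pi^s(\theta^{n+1}\omega)f(n+1)+\Pi^u(\theta^{n+1}\omega)f(n+1)=f(n+1)$.

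The norm estimate rests on \eqref{ted1}--\eqref{ted2} applied at base point $\theta^k\omega$ and on $\theta$-invariance of $\alpha$, which give
\[
\|\Phi(n-k,\theta^k\omega)\Pi^s(\theta^k\omega)\|\le K(\theta^k\omega)e^{-\alpha(\omega)(n-k)}\quad(k\le n),
\]
\[
\|\Phi(n-k,\theta^k\omega)\Pi^u(\theta^k\omega)\|\le K(\theta^k\omega)e^{\alpha(\omega)(n-k)}\quad(k>n).
\]
The crucial cancellation is that the factor $K(\theta^k\omega)$ in the kernel absorbs the $1/K(\theta^k\omega)$ hidden in the pointwise bound $|f(k)|\le\|f\|\,e^{\beta(\omega)k}/K(\theta^k\omega)$, so both sums reduce to geometric series with ratios $e^{-(\alpha(\omega)\pm\beta(\omega))}$. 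Both ratios lie in $(0,1)$ because $\beta(\omega)\in(-\alpha(\omega),\alpha(\omega))$, yielding \eqref{bound} with a constant $\gamma(\omega)$ built purely from $\alpha(\omega)$ and $\beta(\omega)$, hence $\theta$-invariant. Part (ii) uses the same formula: splitting the sums according to the four signs of $(n,k)$ and invoking $-\alpha(\omega)|n|\le\beta(\omega)|n|$ collapses every exponent into $e^{\beta(\omega)|n|}$, producing \eqref{bound-2} with a $\theta$-invariant $\widetilde\gamma$.

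For uniqueness, I would let $y$ be any solution of the homogeneous equation lying in $\ell^\infty_{1,\beta(\omega)}(\Z)$ and split $y^{s/u}(n):=\Pi^{s/u}(\theta^n\omega)y(n)$, which are themselves solutions by \eqref{inv}. Item (iii) gives $y^u(0)=\Phi(-n,\theta^n\omega)\Pi^u(\theta^n\omega)y(n)$ for $n\ge0$, hence
\[
|y^u(0)|\le K(\theta^n\omega)\|\Pi^u(\theta^n\omega)\|\,\|y\|\,e^{-(\alpha(\omega)-\beta(\omega))n}\longrightarrow 0\quad(n\to+\infty),
\]
because $\alpha>\beta$ and the projection factors are tempered (using $\|\Pi^s(\omega)\|\le K(\omega)$); this forces $y^u(0)=0$ and then $y^u\equiv0$ by the isomorphism in (iii). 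For the stable part, the bound $|y^s(n)|\le K(\theta^m\omega)e^{-\alpha(\omega)(n-m)}|y^s(m)|\le\|y\|\,K(\theta^m\omega)\|\Pi^s(\theta^m\omega)\|\,e^{-\alpha(\omega)n+(\alpha+\beta)(\omega)m}$ for $m\le n$ kills $y^s(n)$ as $m\to-\infty$, since $\alpha+\beta>0$ beats the subexponential growth of $K$. The $|\cdot|$-weighted analogue follows verbatim. Measurability is automatic once one notes that each summand in the Green's formula is measurable in $\omega$ (strong measurability of $\Pi^s$ for the stable side, hypothesis (iii) for the unstable side) and that the geometric bound guarantees pointwise convergence. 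The step I expect to demand the most care is the stable uniqueness, because $A$ may fail to be invertible on $\mathcal R(\Pi^s(\omega))$ so the dynamics cannot be run backward; the whole argument has to trade the tempered growth of $K(\theta^m\omega)$ against the genuine exponential gain $e^{(\alpha+\beta)(\omega)m}\to 0$, and this is precisely where the strict inequality $|\beta|<\alpha$ is indispensable.
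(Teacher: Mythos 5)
Your proposal reproduces the paper's proof in all essential respects: you build the solution via the same dichotomy-adapted Green's formula $x(n)=\sum_k G(n-k,\theta^k\omega)f(k)$, derive the norm estimate by the same geometric-series computation (with the $K(\theta^k\omega)$ in the dichotomy bound cancelling the $1/K$ in the input weight), prove uniqueness by splitting the homogeneous solution into $\Pi^s$- and $\Pi^u$-components and sending the auxiliary index to $\mp\infty$ against the tempered growth of $K$, and obtain measurability as a pointwise limit of finite partial sums. The only cosmetic differences are the extra (harmless, tempered) factors $\|\Pi^{s/u}(\theta^m\omega)\|$ in your uniqueness bounds and a somewhat loosely stated sign-splitting inequality in part (ii), where the precise tool is $-\beta(\omega)|n|+\beta(\omega)|k|\le \beta(\omega)|n-k|$ when $\beta(\omega)\ge 0$ and $\le -\beta(\omega)|n-k|$ when $\beta(\omega)<0$.
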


{\bf Proof.}
Without any loss of generality, we may assume that~\eqref{temp}
holds for each $\omega \in \widetilde{\Omega}$. Indeed, otherwise we can replace $\widetilde{\Omega}$ with $\widetilde{\Omega}\cap \big ( \bigcap_{n\in \Z} \theta^n(\Omega')\big )$, where $\Omega'$ is a full measure set such that~\eqref{temp} holds for each $\omega \in \Omega'$.

For $\omega \in \widetilde{\Omega}$ and $n\in \Z$, let
\begin{equation}
G(n,\omega):= \left\{\begin{array}{ll}
  \Phi(n,\omega)\Pi^{s}(\omega),
  & n\ge 0;
\\
 -\Phi(n,\omega)\Pi^{u}(\omega),
 & n<0,
\end{array}\right.
\label{GGG-Temp}
\end{equation}
where $\Pi^{s}(\omega)$ for $\omega \in \widetilde{\Omega}$ is the projection associated to the tempered exponential dichotomy for $\Phi$
and $\Pi^{u}(\omega):=\Id-\Pi^{s}(\omega)$.
For convenience, we also call $G(n,\omega)$ the Green function
corresponding to the tempered exponential dichotomy as the one corresponding to the uniform exponential dichotomy for deterministic systems
in \cite[p.167]{Daleckij-Krein-book-1974}.

By~\eqref{ted1} and~\eqref{ted2}, we have
\begin{equation}\label{green}
\|G(n,\omega)\|\leq K(\omega)e^{-\alpha(\omega)|n|}, \ \forall \ (\omega, n)\in  \widetilde{\Omega} \times \mathbb{Z}.
\end{equation}
For $f:=(f(n))_{n\in \mathbb{Z}} \in \ell_{1/K(\omega),\beta(\omega)}^{\infty}(\mathbb{Z})$, we have (using~\eqref{green}) that
\begin{equation}\label{de}
\begin{split}
& \bigg|\sum_{k=-\infty}^{+\infty} G(n-k,\theta^{k}\omega)f(k) \bigg|
 \le
 \sum_{k=-\infty}^{+\infty} e^{-\alpha(\omega)|n-k|}e^{\beta(\omega)k}
 \|f \|_{1/K(\omega),\beta(\omega)}\\
 &=
 e^{\beta(\omega)n}\sum_{k=-\infty}^{+\infty} e^{-\alpha(\omega)|n-k|}e^{-\beta(\omega)n}e^{\beta(\omega)k}
  \|f\|_{1/K(\omega),\beta(\omega)}\\
 &=
 e^{\beta(\omega)n} \bigg{\{}\sum_{k=-\infty}^{n} e^{-(\alpha(\omega)+\beta(\omega))(n-k)}
 +\sum_{k=n+1}^{+\infty} e^{-(\alpha(\omega)-\beta(\omega))(k-n)} \bigg{\}}
  \|f \|_{1/K(\omega),\beta(\omega)}\\
 &=
 e^{\beta(\omega)n}\bigg{\{}\frac{1}{1-e^{-(\alpha(\omega)+\beta(\omega))}}
 +\frac{e^{-(\alpha(\omega)-\beta(\omega))}}{1-e^{-(\alpha(\omega)-\beta(\omega))}} \bigg{\}}
  \|f \|_{1/K(\omega),\beta(\omega)},
 \end{split}
\end{equation}
for $n\in \Z$.
We conclude that $x=(x(n))_{n\in \Z}\in \ell_{1,\beta(\omega)}^{\infty}(\mathbb{Z})$,
where
\begin{eqnarray}
\label{admis-solu-x}
 x(n):=\sum_{k=-\infty}^{+\infty} G(n-k,\theta^{k}\omega)f(k), \quad n\in \Z.
\end{eqnarray}
We claim that $x$ is the solution of~\eqref{eqn-NH}.
Indeed,
for any $n\in \mathbb{Z}$ we have that
\[
\begin{split}
 A(\theta^{n}\omega)x(n)
 &=A(\theta^{n}\omega)\sum_{k=-\infty}^{+\infty} G(n-k,\theta^{k}\omega)f(k)
  \\
  &=
  A(\theta^{n}\omega)\sum_{k=-\infty}^{n}\Phi(n-k,\theta^{k}\omega)\Pi^{s}(\theta^{k}\omega)f(k)
  \\
  &\phantom{=}- A(\theta^{n}\omega)\sum_{k=n+1}^{\infty}\Phi(n-k,\theta^{k}\omega)\Pi^{u}(\theta^{k}\omega)f(k)
      \\
  &=
  \sum_{k=-\infty}^{n}\Phi(n+1-k,\theta^{k}\omega)\Pi^{s}(\theta^{k}\omega)f(k)
  \\
  &\phantom{=} -\Pi^{u}(\theta^{n+1}\omega)f(n+1)
  -\sum_{k=n+2}^{\infty}\Phi(n+1-k,\theta^{k}\omega)\Pi^{u}(\theta^{k}\omega)f(k)
       \\
  &=
  \sum_{k=-\infty}^{n+1}\Phi(n+1-k,\theta^{k}\omega)\Pi^{s}(\theta^{k}\omega)f(k)
  -\Pi^{s}(\theta^{n+1}\omega)f(n+1)
  \\
  &\phantom{=}-\Pi^{u}(\theta^{n+1}\omega)f(n+1)-\sum_{k=n+2}^{\infty}\Phi(n+1-k,\theta^{k}\omega)\Pi^{u}(\theta^{k}\omega)f(k)
  \\
  &=
  x(n+1)-f(n+1),
\end{split}
\]
which implies that
\begin{eqnarray*}
  x(n+1)=A(\theta^{n}\omega)x(n)+f(n+1), \quad \forall \ n\in \mathbb{Z}.
\end{eqnarray*}
Hence, $x$ is the solution of~\eqref{eqn-NH}.  Moreover, observe that it follows from~\eqref{de} that \eqref{bound} holds with
the $\theta$-invariant positive random variable
\[
\gamma (\omega):=\frac{1+e^{-(\alpha(\omega)-\beta(\omega))}}{1-e^{-(\alpha(\omega)-|\beta(\omega)|)}}.
\]

In order to show the uniqueness, it is  sufficient  to prove that for $\omega \in \widetilde{\Omega}$,  the only
solution $x=(x(n))_{n\in \Z}\subset X$ of
\begin{equation}\label{hom}
x(n+1)=A(\theta^n \omega)x(n), \quad n\in \Z
\end{equation}
in
 $\ell_{1,\beta(\omega)}^{\infty}(\mathbb{Z})$ is the trivial solution
 $x=0$.
 Let $x:=(x(n))_{n\in \mathbb{Z}}$ be a solution of~\eqref{hom} in $\ell_{1,\beta(\omega)}^{\infty}(\mathbb{Z})$.
For $n\in \Z$, let
\[
x^{s}(n):= \Pi^{s}(\theta^{n}\omega)x(n) \quad \text{and} \quad x^{u}(n):= \Pi^{u}(\theta^{n}\omega)x(n).
\]
Then, we have that
\begin{eqnarray*}
x^{s}(n)&=& \Pi^{s}(\theta^{n}\omega)\Phi(n-m,\theta^{m}\omega)x(m)
\\
  &=&\Phi(n-m,\theta^{m}\omega)\Pi^{s}(\theta^{m}\omega)x(m),
  \quad  \forall \ n\geq m,
\end{eqnarray*}
and similarly
\begin{eqnarray*}
x^{u}(n)
   &=& \Phi(n-m,\theta^{m}\omega)\Pi^{u}(\theta^{m}\omega)x(m), \quad
  \forall\ n\leq m.
\end{eqnarray*}
By~\eqref{ted1} and~\eqref{ted2}, we have that
\begin{eqnarray}
|x^{s}(n)|
 &\le& K(\theta^{m}\omega) e^{-\alpha(\omega)(n-m)}|x(m)|
  ~~\mbox{ for } m\le n,
\label{ss-+}
\\
|x^{u}(n)|
 &\le&
 K(\theta^{m}\omega) e^{-\alpha(\omega)(m-n)}|x(m)|
 ~\,\mbox{ for } m\ge n.
\label{uu--}
\end{eqnarray}
Fix $n\in \Z$.  It follows from (\ref{ss-+}) that
\begin{eqnarray}
 |x^{s}(n)|\leq
  K(\theta^{m}\omega) e^{-\alpha(\omega)n} e^{(\alpha(\omega)+\beta(\omega))m}\|x\|_{\ell_{1,\beta(\omega)}^{\infty}(\mathbb{Z})},
\label{ss-2}
\end{eqnarray}
for $m\le n$.
Since
 $\alpha(\omega)+\beta(\omega)>0$, it follows from~\eqref{temp} that
\[\lim_{m\rightarrow -\infty}K(\theta^{m}\omega) e^{(\alpha(\omega)+\beta(\omega))m}=0.\]
Consequently, by letting $m\to -\infty$ in~\eqref{ss-+} we conclude that $x^s(n)=0$.
Similarly, we obtain that $x^u(n)=0$ and thus $x(n)=x^s(n)+x^u(n)=0$. Since $n\in \Z$ was arbitrary, we conclude that $x=0$.
Therefore, the pair $(\ell_{1/K,\beta}^{\infty}(\widetilde{\Omega}\times \mathbb{Z}),\ell_{1,\beta}^{\infty}(\widetilde{\Omega}\times \mathbb{Z}))$ is properly admissible.

In order to show that this pair is also measurably properly admissible,
let $f\colon \widetilde \Omega \times \Z \to X$
be a measurable map such that $(f(\omega, n))_{n\in \Z}\in \ell_{1/K(\omega),\beta(\omega)}^{\infty}(\mathbb{Z})$ for $\omega \in \widetilde \Omega$.
From the  preceding discussion, we have that the unique solution of~\eqref{madm} is given by
\begin{eqnarray*}
 x(\omega, n):=\sum_{k=-\infty}^{+\infty} G(n-k,\theta^{k}\omega)f(\omega,k), \quad (\omega, n)\in \widetilde \Omega \times \Z.
\end{eqnarray*}
Hence, since $\omega\mapsto f(\omega, n)$ is measurable for any fixed $n\in \mathbb{Z}$,
the map  \[\omega\mapsto \sum_{k=-m}^{m} G(n-k,\theta^{k}\omega)f(\omega, k)\] is measurable  for any fixed $m\in \mathbb{Z}_{+}$. Indeed, this follows directly from the
 strong measurability  of  maps $\omega\mapsto \Phi(n,\omega)$,
 $\omega\mapsto \Pi^{s}(\omega)$ and $\omega\mapsto \Phi(n,\omega)\Pi^{u}(\omega)$.
This implies that for arbitrary $n\in \Z$,   $\omega\mapsto x(\omega, n)$ is measurable
since  it is the pointwise limit of measurable maps
$\omega\mapsto \sum_{k=-m}^{m} G(n-k,\theta^{k}\omega)f(\omega, k)$. We conclude that the pair $(\ell_{1/K,\beta}^{\infty}(\widetilde{\Omega}\times \mathbb{Z}),\ell_{1,\beta}^{\infty}(\widetilde{\Omega}\times \mathbb{Z}))$ is
measurably properly admissible.

We now  prove that the pair
$(\ell_{|1/K,\beta|}^{\infty}(\widetilde{\Omega}\times \mathbb{Z}),\ell_{|1,\beta|}^{\infty}(\widetilde{\Omega}\times \mathbb{Z}))$
 is measurably properly admissible. Similarly to~\eqref{de}, in the case when $\beta(\omega)\ge 0$, for $f:=(f(n))_{n\in \mathbb{Z}} \in \ell_{|1/K(\omega),\beta(\omega)|}^{\infty}(\mathbb{Z})$ we have that
\[
\begin{split}
& \bigg|\sum_{k=-\infty}^{+\infty} G(n-k,\theta^{k}\omega)f(k) \bigg|
 \le
 \sum_{k=-\infty}^{+\infty} e^{-\alpha(\omega)|n-k|}e^{\beta(\omega)|k|}
 \|f \|_{|1/K(\omega),\beta(\omega)|}\\
 &=
 e^{\beta(\omega)|n|}\sum_{k=-\infty}^{+\infty} e^{-\alpha(\omega)|n-k|}e^{-\beta(\omega)|n|}e^{\beta(\omega)|k|}
  \|f \|_{|1/K(\omega),\beta(\omega)|}\\
 &\leq
 e^{\beta(\omega)|n|} \bigg{\{}\sum_{k=-\infty}^{n} e^{-(\alpha(\omega)-\beta(\omega))(n-k)}
 +\sum_{k=n+1}^{+\infty} e^{-(\alpha(\omega)-\beta(\omega))(k-n)} \bigg{\}}
  \|f\|_{|1/K(\omega),\beta(\omega)|}\\
 &=
 e^{\beta(\omega)|n|}\bigg{\{}\frac{1}{1-e^{-(\alpha(\omega)-\beta(\omega))}}
 +\frac{e^{-(\alpha(\omega)-\beta(\omega))}}{1-e^{-(\alpha(\omega)-\beta(\omega))}} \bigg{\}}
  \|f\|_{|1/K(\omega),\beta(\omega)|},
 \end{split}
\]
for each $n\in \Z$, where we used that
 $-\beta(\omega)|n|+\beta(\omega)|k|\leq \beta(\omega)|n-k|$.
Analogously, if  $\beta(\omega)< 0$ we have that
\[
\begin{split}
& \bigg|\sum_{k=-\infty}^{+\infty} G(n-k,\theta^{k}\omega)f(k) \bigg|
 \le
 \sum_{k=-\infty}^{+\infty} e^{-\alpha(\omega)|n-k|}e^{\beta(\omega)|k|}
  \|f\|_{|1/K(\omega),\beta(\omega)|}\\
 &=
 e^{\beta(\omega)|n|}\sum_{k=-\infty}^{+\infty} e^{-\alpha(\omega)|n-k|}e^{-\beta(\omega)|n|}e^{\beta(\omega)|k|}
  \|f\|_{|1/K(\omega),\beta(\omega)|}\\
 &\leq
 e^{\beta(\omega)|n|} \bigg{\{}\sum_{k=-\infty}^{n} e^{-(\alpha(\omega)+\beta(\omega))(n-k)}
 +\sum_{k=n+1}^{+\infty} e^{-(\alpha(\omega)+\beta(\omega))(k-n)} \bigg{\}}
  \|f\|_{|1/K(\omega),\beta(\omega)|}\\
 &=
 e^{\beta(\omega)|n|}\bigg{\{}\frac{1}{1-e^{-(\alpha(\omega)+\beta(\omega))}}
 +\frac{e^{-(\alpha(\omega)+\beta(\omega))}}{1-e^{-(\alpha(\omega)+\beta(\omega))}} \bigg{\}}
  \|f\|_{|1/K(\omega),\beta(\omega)|},
 \end{split}
\]
for each $n\in \Z$, since $-\beta(\omega)|n|+\beta(\omega)|k|\leq -\beta(\omega)|n-k|$.
Hence, (\ref{bound-2}) holds with the $\theta$-invariant positive random variable
\[
\widetilde{\gamma}(\omega)
:=\begin{cases} \frac{1+e^{-(\alpha(\omega)-\beta(\omega))}}{1-e^{-(\alpha(\omega)-\beta(\omega))}}
&  \text{if $\beta(\omega)\geq  0$}\\
\frac{1+e^{-(\alpha(\omega)+\beta(\omega))}}{1-e^{-(\alpha(\omega)+\beta(\omega))}}
  & \text{if $\beta(\omega)<  0$.}
\end{cases}
\]
For the uniqueness and measurability,  one can argue in the same manner as
we did above for the  pair $(\ell_{1/K,\beta}^{\infty}(\widetilde{\Omega}\times \mathbb{Z}),\ell_{1,\beta}^{\infty}(\widetilde{\Omega}\times \mathbb{Z}))$.
This completes the proof of the theorem.
\hfill$\square$

In next section,
 we will conversely
use the three pairs in Theorem~\ref{theorem-texd-admi}
to prove the existence of tempered exponential dichotomy
for measurable cocycles which may not satisfy the assumptions of MET.


\section{Converse without MET}\label{T2}

The main result on the converse reads:

\begin{theorem}
\label{theorem-admi-texd}
 Suppose that $\Phi$
 is a measurable linear cocycle over $\theta$
with the generator $A$. Furthermore,
assume that there exist a $\theta$-invariant measurable subset $\widetilde{\Omega}\subset \Omega$ of full measure,
 a $\theta$-invariant random variable $\beta \colon \Omega \to (0, +\infty)$
 and a tempered (with respect to $\theta$) random variable $K\colon \Omega \to (0, +\infty)$ such that
\begin{itemize}
\item[(S1)] for $\lambda=\pm \beta$,
the pair $(\ell_{1/K,\lambda}^{\infty}(\widetilde{\Omega}\times \mathbb{Z}),\ell_{1,\lambda}^{\infty}(\widetilde{\Omega}\times \mathbb{Z}))$
is measurably properly admissible,
and additionally there is a $\theta$-invariant random variable $\gamma_{\lambda}
\colon \Omega \to (0, +\infty)$ such that for $\omega \in \widetilde \Omega$ and $f=(f(n))_{n\in \Z}\in \ell_{1/K(\omega),\lambda(\omega)}^{\infty}(\mathbb{Z})$,
\begin{equation}\label{io}
\|x\|_{\ell_{1,\lambda(\omega)}^{\infty}(\mathbb{Z})}\leq \gamma_{\lambda}(\omega)\|f\|_{\ell_{1/K(\omega),\lambda(\omega)}^{\infty}(\mathbb{Z})},
\end{equation}
where $x=(x(n))_{n\in \Z}$ is the unique solution of~\eqref{eqn-NH} in
 $\ell^\infty_{1, \lambda(\omega)}(\Z)$;

\item[(S2)] the pair  $(\ell_{|1/K,\beta|}^{\infty}(\widetilde{\Omega}\times \mathbb{Z}),\ell_{|1,\beta|}^{\infty}(\widetilde{\Omega}\times \mathbb{Z}))$ is properly admissible.
\end{itemize}
Then,  $\Phi$ has  a tempered exponential dichotomy.
\end{theorem}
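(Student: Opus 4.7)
The plan is to use the three admissibility hypotheses to construct a consistent stable--unstable splitting via impulse inputs and to read off the required bounds. Fix $\omega\in\widetilde{\Omega}$ and $y\in X$, and feed the impulse $f(\omega,k):=\delta_{k,0}\,y$, which lies in all three weighted input spaces because $K(\omega)$ is finite. By (S1) with $\lambda=-\beta$ there is a unique solution $x^{-}\in\ell^{\infty}_{1,-\beta(\omega)}(\mathbb{Z})$ of the admissibility equation; by (S1) with $\lambda=+\beta$ there is a unique solution $x^{+}\in\ell^{\infty}_{1,+\beta(\omega)}(\mathbb{Z})$. Both lie in the larger space $\ell^{\infty}_{|1,\beta(\omega)|}(\mathbb{Z})$, so the uniqueness in (S2) forces $x^{-}=x^{+}=:x_{\omega,y}$. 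I then define
\[
\Pi^{s}(\omega)y:=x_{\omega,y}(0), \qquad \Pi^{u}(\omega):=\Id-\Pi^{s}(\omega).
\]
Linearity of $\Pi^{s}(\omega)$ in $y$ is immediate from uniqueness, the bound \eqref{io} with $\lambda=-\beta$ gives $|\Pi^{s}(\omega)y|\le\gamma_{-\beta}(\omega)K(\omega)|y|$, and the measurable admissibility in (S1) applied to the (constant-in-$\omega$) impulse input yields strong measurability of $\omega\mapsto\Pi^{s}(\omega)y$.

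Next I would verify the algebraic structure and the backward invertibility of $\Phi$ on $\mathcal{R}(\Pi^{u})$. Idempotency $(\Pi^{s})^{2}=\Pi^{s}$ and cocycle invariance $\Pi^{s}(\theta^{n}\omega)\Phi(n,\omega)=\Phi(n,\omega)\Pi^{s}(\omega)$ for $n\ge 0$ both follow by the standard uniqueness trick: one feeds a pre-composed or time-shifted impulse, observes that two natural candidate solutions both satisfy the admissibility equation in the right class, and invokes unique solvability in (S1) (or in (S2) after comparing via the larger class) to identify them. For the invertibility required in part (iii) of the dichotomy, note that $x_{\omega,y}$ satisfies the homogeneous equation for $k\le -1$ while the jump at $k=0$ reads $A(\theta^{-1}\omega)x_{\omega,y}(-1)=-\Pi^{u}(\omega)y$; iterating gives $\Phi(m,\theta^{-m}\omega)x_{\omega,y}(-m)=-\Pi^{u}(\omega)y$ for every $m\ge 1$. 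A second application of the uniqueness trick (comparing the sequence $k\mapsto\Pi^{s}(\theta^{k}\omega)x_{\omega,y}(k)$ with the zero extension of $\Phi(k,\omega)\Pi^{s}(\omega)y$) forces $x_{\omega,y}(-m)\in\mathcal{R}(\Pi^{u}(\theta^{-m}\omega))$ and yields injectivity of $\Phi(m,\theta^{-m}\omega)|_{\mathcal{R}(\Pi^{u}(\theta^{-m}\omega))}$. This supplies exactly the backward extension $\Phi(n,\omega)\Pi^{u}(\omega)y:=-x_{\omega,y}(n)$ for $n\le -1$, measurable in $\omega$ by the measurable admissibility.

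The exponential bounds then fall out quickly. For $n\ge 1$ the impulse vanishes, so $x_{\omega,y}(n)=\Phi(n,\omega)\Pi^{s}(\omega)y$, and the $\ell^{\infty}_{1,-\beta(\omega)}$ norm bound \eqref{io} rearranges to
\[
\|\Phi(n,\omega)\Pi^{s}(\omega)\|\le\gamma_{-\beta}(\omega)K(\omega)e^{-\beta(\omega)n},
\]
which is \eqref{ted1} with $\alpha:=\beta$ and new bound $\widehat{K}:=(\gamma_{-\beta}+\gamma_{\beta})K$. Since $K$ is tempered and $\gamma_{\pm\beta}$ are $\theta$-invariant, $\widehat{K}$ remains tempered. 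Using the identification $\Phi(n,\omega)\Pi^{u}(\omega)y=-x_{\omega,y}(n)$ for $n\le -1$ from the previous step together with the $\ell^{\infty}_{1,+\beta(\omega)}$ bound yields
\[
\|\Phi(n,\omega)\Pi^{u}(\omega)\|\le\gamma_{\beta}(\omega)K(\omega)e^{\beta(\omega)n}, \qquad n\le 0,
\]
which is \eqref{ted2}.

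The main obstacle is reconciling the three admissibility pairs into a single, consistent splitting. Each of (S1) with $\lambda=-\beta$ and with $\lambda=+\beta$ a priori supplies its own candidate Green kernel and projection with no obvious reason to coincide, and without a common projection the backward orbits and the unstable subspace cannot even be defined. The role of (S2) is precisely to provide the uniqueness umbrella in the larger class $\ell^{\infty}_{|1,\beta|}$ that forces the two candidates to agree. A secondary difficulty is extracting the injectivity of $\Phi$ on $\mathcal{R}(\Pi^{u}(\theta^{-m}\omega))$ in the infinite-dimensional setting; this is not available a priori but must be produced from the backward orbits $(x_{\omega,y}(-m))_{m\ge 0}$ together with the invariance of $\Pi^{s}$ via additional applications of the uniqueness trick.
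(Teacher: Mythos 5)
Your proposal follows essentially the same strategy as the paper's proof: feed impulse inputs to the admissibility operator, use the uniqueness in (S2) as the umbrella that forces the $\lambda=\beta$ and $\lambda=-\beta$ Green kernels to coincide (the paper's Lemma~\ref{L2}), define the projection as the value of the impulse response at time $0$ (the paper's $P_\omega^s(0)=G_\omega(0,0)$), read off the exponential bounds from the admissibility estimate \eqref{io}, and obtain invariance, idempotency, and backward invertibility by repeated appeals to unique solvability of the homogeneous equation. The only differences are organizational (the paper packages the impulse-response operator as $L_{\lambda,\omega}^{-1}$ and works with a two-parameter Green kernel $G_{\lambda,\omega}(n,k)$, proving closedness of $L_{\lambda,\omega}$ and a sequence of lemmas), so the proposal is correct and matches the paper's approach.
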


Note that the theorem needs three admissible pairs
$(\ell_{1/K,\beta}^{\infty}(\widetilde{\Omega}\times \mathbb{Z}),\ell_{1,\beta}^{\infty}(\widetilde{\Omega}\times \mathbb{Z}))$,
$(\ell_{1/K,-\beta}^{\infty}(\widetilde{\Omega}\times \mathbb{Z}),\ell_{1,-\beta}^{\infty}(\widetilde{\Omega}\times \mathbb{Z}))$
and
$(\ell_{|1/K,\beta|}^{\infty}(\widetilde{\Omega}\times \mathbb{Z}),\ell_{|1,\beta|}^{\infty}(\widetilde{\Omega}\times \mathbb{Z}))$,
as mentioned in the Introduction.

{\bf Proof of Theorem \ref{theorem-admi-texd}.}
The proof will be divided into several steps.
For any fixed $\omega\in \widetilde{\Omega}$ and $\lambda=\pm \beta$,
let
$L_{\lambda,\omega} \colon \mathcal D(L_{\lambda, \omega})\subset \ell_{1,\lambda(\omega)}^{\infty}(\mathbb{Z}) \to \ell_{1/K(\omega),\lambda(\omega)}^{\infty}(\mathbb{Z})$ be the linear operator
defined by
\begin{eqnarray*}
 (L_{\lambda,\omega}x)(n):=x(n)-A(\theta^{n-1}\omega)x(n-1), \quad  \ n\in \mathbb{Z},
\end{eqnarray*}
for each $x\in \mathcal D(L_{\lambda, \omega})$,
where $D(L_{\lambda, \omega})$ denotes the domain of $L_{\lambda,\omega}$,
that is,
$\mathcal D(L_{\lambda, \omega}):=\{x\in \ell_{1,\lambda(\omega)}^{\infty}(\mathbb{Z}): L_{\lambda,\omega}x \in \ell_{1/K(\omega),\lambda(\omega)}^{\infty}(\mathbb{Z})\}
$.

\begin{lemma}\label{CO}
$L_{\lambda,\omega}$ is a closed linear operator.
\end{lemma}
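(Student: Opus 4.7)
The plan is to verify closedness directly from the definition as a graph-closure condition. I will take a sequence $(x_m)_{m\in\N}$ in $\mathcal{D}(L_{\lambda,\omega})$ such that $x_m \to x$ in $\ell^\infty_{1,\lambda(\omega)}(\Z)$ and $L_{\lambda,\omega}x_m \to y$ in $\ell^\infty_{1/K(\omega),\lambda(\omega)}(\Z)$, and show that $x$ lies in $\mathcal{D}(L_{\lambda,\omega})$ with $L_{\lambda,\omega}x = y$.

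The first step is to observe that for any fixed $n\in\Z$, both weights $e^{-\lambda(\omega)n}$ and $K(\theta^n\omega)e^{-\lambda(\omega)n}$ are strictly positive constants, so convergence in the respective weighted supremum norms forces the coordinate-wise convergences $x_m(n)\to x(n)$ and $(L_{\lambda,\omega}x_m)(n)\to y(n)$ in $X$. The second step is to use that each fiber operator $A(\theta^{n-1}\omega)$ lies in $\mathcal{L}(X)$ and is therefore continuous, so $A(\theta^{n-1}\omega)x_m(n-1) \to A(\theta^{n-1}\omega)x(n-1)$. Passing to the limit in the identity $(L_{\lambda,\omega}x_m)(n) = x_m(n) - A(\theta^{n-1}\omega)x_m(n-1)$ will then yield $y(n) = x(n) - A(\theta^{n-1}\omega)x(n-1)$ for every $n\in\Z$. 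Since $y\in\ell^\infty_{1/K(\omega),\lambda(\omega)}(\Z)$ by hypothesis and coincides pointwise with the sequence $(L_{\lambda,\omega}x)(n)$, the membership $x\in\mathcal{D}(L_{\lambda,\omega})$ and the identity $L_{\lambda,\omega}x=y$ follow at once.

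I do not anticipate any genuine obstacle here: the argument invokes only the positivity of the two weights at each fixed index and the bounded-operator structure of the cocycle generator $A$. The interest of isolating this lemma lies downstream of its statement, where closedness is the prerequisite for applying the closed graph (or open mapping) theorem to $L_{\lambda,\omega}$, so that the admissibility encoded in (S1) together with the estimate \eqref{io} can be upgraded to a bounded two-sided inverse of $L_{\lambda,\omega}$ on the Banach space $\ell^\infty_{1/K(\omega),\lambda(\omega)}(\Z)$---the natural starting point for constructing the stable and unstable projections needed in the proof of Theorem \ref{theorem-admi-texd}.
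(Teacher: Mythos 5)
Your proposal is correct and follows essentially the same argument as the paper: take a sequence in the domain with both $x_m$ and $L_{\lambda,\omega}x_m$ convergent, extract pointwise limits, pass to the limit in the fiber identity using boundedness of $A(\theta^{n-1}\omega)$, and conclude. The only difference is that you spell out explicitly why norm convergence implies coordinate-wise convergence (positivity of the weights at each fixed index), a detail the paper leaves implicit.
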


\begin{proof}[{\bf Proof}]
Let $(x_k)_{k\in \N}$ be a sequence in $\mathcal D(L_{\lambda, \omega})$ such that sequences $(x_k)_{k\in \N}$ and $(L_{\lambda, \omega}x_k)_{k\in \N}$ converge in $\ell_{1,\lambda(\omega)}^{\infty}(\mathbb{Z})$ and $\ell_{1/K(\omega),\lambda(\omega)}^{\infty}(\mathbb{Z})$ respectively. Let
\[
x:=\lim_{k\to \infty} x_k \quad \text{and} \quad y:=\lim_{k\to \infty} L_{\lambda, \omega}x_k.
\]
Then, for any $n\in \Z$ we have
\[
\begin{split}
x(n)-A(\theta^{n-1}\omega)x(n-1) &=\lim_{k\to \infty}(x_k(n)-A(\theta^{n-1}\omega)x_k(n-1)) \\
&=\lim_{k\to \infty}(L_{\lambda, \omega}x_k)(n)\\
&=y(n).
\end{split}
\]
Hence, $x\in \mathcal D(L_{\lambda, \omega})$ and $L_{\lambda, \omega}x=y$. The proof is completed.
\end{proof}

Observe that the assumptions of the theorem imply that $L_{\lambda, \omega}$ is invertible.
 By Lemma~\ref{CO}, $L_{\lambda, \omega}^{-1}\colon \ell_{1/K(\omega),\lambda(\omega)}^{\infty}(\mathbb{Z}) \to \ell_{1,\lambda(\omega)}^{\infty}(\mathbb{Z})$
is closed and thus bounded.

For any $n, k\in \Z$ and $\omega \in \widetilde \Omega$, we define a linear operator $G_{\lambda, \omega}(n, k)\colon X\to X$ in the following way. For $z\in X$, let  $\tilde{f}_{z,k}:=(\tilde{f}_{z,k}(m))_{m\in \mathbb{Z}}$ be the sequence
given by
\begin{equation}
\tilde{f}_{z,k}(m):= \left\{\begin{array}{ll}
  z,
  & m=k,
\\
 0,
 & m\neq k.
\end{array}\right.
\label{def-td-f-zk}
\end{equation}
We now let
\[
G_{\lambda, \omega}(n, k) z:=(L_{\lambda, \omega}^{-1} \tilde{f}_{z,k})(n).
\]
Observe that $G_{\lambda, \omega}(n,k)$ is a bounded linear operator. Indeed, using~\eqref{io}, for any $z\in X$ we have
\[
\begin{split}
e^{-\lambda (\omega)n}|G_{\lambda, \omega}(n, k) z| &=e^{-\lambda (\omega)n}|(L_{\lambda, \omega}^{-1} \tilde{f}_{z,k})(n)|  \\
&\le \lVert L_{\lambda, \omega}^{-1} \tilde{f}_{z,k}\rVert_{\ell_{1,\lambda(\omega)}^{\infty}(\mathbb{Z})}\\
&\le \lVert  L_{\lambda, \omega}^{-1}\rVert \cdot \lVert \tilde{f}_{z,k}\rVert_{\ell_{1/K(\omega),\lambda(\omega)}^{\infty}(\mathbb{Z})}\\
&\le \gamma_\lambda(\omega)K(\theta^k (\omega))e^{-\lambda (\omega)k}|z|,
\end{split}
\]
and thus
\begin{equation}\label{G}
\lVert G_{\lambda, \omega}(n, k)\rVert \le \gamma_\lambda(\omega)K(\theta^k (\omega))e^{-\lambda (\omega)k}e^{\lambda (\omega)n}.
\end{equation}

\begin{lemma}\label{SM}
For each $(n,k)\in \Z\times \Z$, the map $\omega \mapsto G_{\lambda, \omega}(n,k)$ is strongly measurable.
\end{lemma}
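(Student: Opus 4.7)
The plan is to extract strong measurability of $\omega \mapsto G_{\lambda,\omega}(n,k)$ directly from the measurable proper admissibility hypothesis (S1), since $G_{\lambda,\omega}(n,k)z$ is, by construction, the value at time $n$ of the unique admissible solution corresponding to an input that is itself measurable (in fact constant) in $\omega$.

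More concretely, I would fix $z \in X$ and $k \in \Z$, and define an input $F_{z,k}\colon \widetilde{\Omega}\times \Z \to X$ by $F_{z,k}(\omega,m) := \tilde{f}_{z,k}(m)$, with $\tilde{f}_{z,k}$ as in \eqref{def-td-f-zk}. Since $F_{z,k}$ does not depend on $\omega$, it is trivially measurable in $\omega$, and for every $\omega \in \widetilde{\Omega}$ the slice $F_{z,k}(\omega,\cdot)$ is a finitely supported sequence and hence lies in $\ell^\infty_{1/K(\omega),\lambda(\omega)}(\Z)$; thus $F_{z,k} \in \ell^\infty_{1/K,\lambda}(\widetilde{\Omega}\times \Z)$. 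By hypothesis (S1), there exists a unique $X_{z,k} \in \ell^\infty_{1,\lambda}(\widetilde{\Omega}\times \Z)$ solving $X_{z,k}(\omega,n+1)=A(\theta^n\omega)X_{z,k}(\omega,n)+F_{z,k}(\omega,n+1)$ for all $\omega,n$, and moreover $\omega \mapsto X_{z,k}(\omega,n)$ is measurable for each fixed $n$.

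Next I would identify $X_{z,k}(\omega,\cdot)$ with $L_{\lambda,\omega}^{-1}\tilde{f}_{z,k}$. Rewriting the solution equation gives $(L_{\lambda,\omega}X_{z,k}(\omega,\cdot))(n) = \tilde{f}_{z,k}(n)$ for every $n \in \Z$, so $X_{z,k}(\omega,\cdot) \in \mathcal D(L_{\lambda,\omega})$ (since the image is in $\ell^\infty_{1/K(\omega),\lambda(\omega)}(\Z)$), and $L_{\lambda,\omega}X_{z,k}(\omega,\cdot) = \tilde{f}_{z,k}$. By the invertibility of $L_{\lambda,\omega}$, which follows from the proper admissibility portion of (S1), we get $X_{z,k}(\omega,n) = (L_{\lambda,\omega}^{-1}\tilde{f}_{z,k})(n) = G_{\lambda,\omega}(n,k)z$ for every $\omega \in \widetilde{\Omega}$. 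Therefore $\omega \mapsto G_{\lambda,\omega}(n,k)z$ is measurable for each $z \in X$, which is exactly the strong measurability claimed.

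The argument is essentially bookkeeping: the real content sits in hypothesis (S1). The only step requiring care is verifying that $X_{z,k}(\omega,\cdot)$ genuinely lies in the domain of $L_{\lambda,\omega}$ so that $L_{\lambda,\omega}^{-1}$ may be applied, but this is immediate because its image under $L_{\lambda,\omega}$ is the finitely supported sequence $\tilde{f}_{z,k}$. No additional analytic input is needed, so I do not anticipate a significant obstacle beyond making the identification between the abstract admissibility solution and the operator-theoretic definition of $G_{\lambda,\omega}(n,k)$ explicit.
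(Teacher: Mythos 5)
Your proof is correct and follows essentially the same route as the paper: both take the $\omega$-independent (hence trivially measurable) input $F_{z,k}(\omega,\cdot)=\tilde f_{z,k}$, invoke measurable proper admissibility to get measurability of the output $\omega\mapsto X_{z,k}(\omega,n)$, and then identify $X_{z,k}(\omega,n)$ with $G_{\lambda,\omega}(n,k)z$. The only difference is that you make the domain-of-$L_{\lambda,\omega}$ bookkeeping explicit, which the paper leaves implicit.
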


\begin{proof}[{\bf Proof}]
Fix $(n,k)\in \Z\times \Z$.
For any fixed $z\in X$, define $f\colon \widetilde \Omega \times \Z \to X$ as
\[
f(\omega, m)=\begin{cases}
z & \text{for $m=k$;}\\
0 & \text{for $m\neq k$,}
\end{cases}
\]
for $(\omega, m)\in \widetilde \Omega \times \Z$. As we have observed, $(f(\omega, m))_{m\in \Z}\in \ell_{1/K(\omega),\lambda(\omega)}^{\infty}(\mathbb{Z})$ for $\omega \in \widetilde \Omega$.
Since the pair $(\ell_{1/K,\lambda}^{\infty}(\widetilde{\Omega}\times \mathbb{Z}),\ell_{1,\lambda}^{\infty}(\widetilde{\Omega}\times \mathbb{Z}))$ is measurably properly admissible, there exists a unique measurable map $x\colon \widetilde{\Omega} \times \Z \to X$ such that
 $(x(\omega, m))_{m\in \Z}\in \ell^\infty_{1, \lambda (\omega)}(\Z)$ for $\omega \in \widetilde \Omega$ and
\[
x(\omega, m+1)=A(\theta^{m}\omega)x(\omega, m)+f(\omega, m+1), \quad \text{for $m\in \Z$.}
\]
It follows that
\[
G_{\lambda, \omega}(n,k)z=x(\omega, n).
\]
Since $\omega \mapsto x(\omega, n)$ is measurable, we conclude that $\omega \mapsto G_{\lambda, \omega}(n,k)z$ is also measurable and the proof of the lemma is completed.
\end{proof}

\begin{lemma}\label{L2}
For any $(n,k)\in \Z\times \Z$,
$G_{\beta, \omega}(n,k)=G_{-\beta, \omega}(n,k)$.
\end{lemma}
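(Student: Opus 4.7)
The plan is to exploit the uniqueness built into assumption (S2). For a fixed $z\in X$ and $k\in\Z$, I would write $x^{\pm}(n):=G_{\pm\beta,\omega}(n,k)z=(L_{\pm\beta,\omega}^{-1}\tilde f_{z,k})(n)$ for $n\in\Z$. By the definition of $L_{\pm\beta,\omega}$ both $x^{+}$ and $x^{-}$ satisfy the same inhomogeneous difference equation
\[
x(n+1)=A(\theta^{n}\omega)x(n)+\tilde f_{z,k}(n+1),\qquad n\in\Z,
\]
with forcing $\tilde f_{z,k}$ as in~\eqref{def-td-f-zk}. Consequently, the difference $y:=x^{+}-x^{-}$ is a genuine solution of the homogeneous equation $y(n+1)=A(\theta^{n}\omega)y(n)$ on $\Z$.

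Next I would check the membership of $y$ in the two-sided weighted space. Since $x^{+}\in\ell^{\infty}_{1,\beta(\omega)}(\Z)$ and $x^{-}\in\ell^{\infty}_{1,-\beta(\omega)}(\Z)$, there is a constant $C=C(\omega,z,k)$ with $|x^{\pm}(n)|\le C e^{\pm\beta(\omega)n}\le C e^{\beta(\omega)|n|}$ for all $n\in\Z$. Hence $|y(n)|\le 2C e^{\beta(\omega)|n|}$, so $y\in\ell^{\infty}_{|1,\beta(\omega)|}(\Z)$.

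Finally, I would invoke (S2). The proper admissibility of the pair $(\ell^{\infty}_{|1/K,\beta|}(\widetilde\Omega\times\Z),\ell^{\infty}_{|1,\beta|}(\widetilde\Omega\times\Z))$ means, in particular, that for the zero input $f\equiv 0$ (which is trivially in $\ell^{\infty}_{|1/K(\omega),\beta(\omega)|}(\Z)$) the unique element of $\ell^{\infty}_{|1,\beta(\omega)|}(\Z)$ solving~\eqref{eqn-NH} is the zero sequence. Applied to $y$, this forces $y\equiv 0$, i.e.\ $x^{+}=x^{-}$, and therefore $G_{\beta,\omega}(n,k)z=G_{-\beta,\omega}(n,k)z$. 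Since $z\in X$ and $(n,k)\in\Z\times\Z$ were arbitrary, the lemma follows.

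The argument is short and the only delicate point is to remember that the role of (S2) here is uniqueness rather than existence: the pair $(\ell^{\infty}_{|1/K,\beta|},\ell^{\infty}_{|1,\beta|})$ is needed precisely because neither of the one-sided spaces $\ell^{\infty}_{1,\pm\beta(\omega)}(\Z)$ individually contains $y$, whereas the symmetric space $\ell^{\infty}_{|1,\beta(\omega)|}(\Z)$ does. This is the conceptual reason three admissibility pairs were postulated in the theorem, and I do not anticipate any further obstacle.
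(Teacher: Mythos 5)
Your argument is correct and is essentially the same as the paper's: you form the difference $y=L_{\beta,\omega}^{-1}\tilde f_{z,k}-L_{-\beta,\omega}^{-1}\tilde f_{z,k}$, observe it solves the homogeneous equation, use the inclusions $\ell^{\infty}_{1,\pm\beta(\omega)}(\Z)\subset\ell^{\infty}_{|1,\beta(\omega)|}(\Z)$ to place $y$ in the two-sided weighted space, and invoke the uniqueness part of (S2) to conclude $y=0$. Nothing further is needed.
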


\begin{proof}[{\bf Proof}]
For any fixed $z\in X$, let $\tilde{f}_{z,k}:=(\tilde{f}_{z,k}(m))_{m\in \mathbb{Z}}$ be the sequence given by~\eqref{def-td-f-zk}. Let $x:=(x(m))_{m\in \Z}\subset X$ be given by
\[
x(m):=(L_{\beta, \omega}^{-1}\tilde{f}_{z,k}-L_{-\beta, \omega}^{-1}\tilde{f}_{z,k})(m), \quad m\in \Z.
\]
Then,
\begin{equation}\label{lde}
x(m)-A(\theta^{m-1}\omega)x(m-1)=0, \quad \forall m\in \Z.
\end{equation}
Moreover, since $\ell_{1,\beta(\omega)}^{\infty}(\Z)\subset \ell_{|1, \beta(\omega)|}^\infty(\Z)$
and $\ell_{1, -\beta(\omega)}^{\infty}(\Z)\subset \ell_{|1, \beta(\omega)|}^\infty(\Z)$, we have that
$x\in \ell_{|1, \beta(\omega)|}^\infty(\Z)$.
From our assumption that the pair $(\ell_{|1/K,\beta|}^{\infty}(\widetilde{\Omega}\times \mathbb{Z}),\ell_{|1,\beta|}^{\infty}(\widetilde{\Omega}\times \mathbb{Z}))$ is properly admissible,
we conclude from~\eqref{lde} that $x=0$.
Consequently, we have that $x(n)=0$ and thus
\[
G_{\beta, \omega}(n,k)z=L_{\beta, \omega}^{-1}\tilde{f}_{z,k}(n)=L_{-\beta, \omega}^{-1}\tilde{f}_{z,k}(n)=G_{-\beta, \omega}(n,k)z.
\]
\end{proof}

For $\omega \in \widetilde{\Omega}$ and $n\ge m$, let
\[
U_{\omega}(n,m):=\begin{cases}
A(\theta^{n-1}\omega)\cdot\cdot\cdot A(\theta^{m}\omega) & \text{for $n>m$;}\\
\Id & \text{for $n=m$.}
\end{cases}
\]
Observe that $U_{\omega}(n,m)=\Phi(n-m,\theta^{m}\omega)$. In addition, for $\omega \in \widetilde{\Omega}$ and $m\in \Z$, let
\[
\mathcal S_\omega(m):=\bigg \{v\in X: \sup_{n\ge m}|U_\omega(n,m)v| <+\infty \bigg \}.
\]
Finally, let $\mathcal U_\omega(m)$ consist of all $v\in X$ for which there exists a sequence $(x(n))_{n\le m}$ such that $x(m)=v$, $\sup_{n\le m}|x(n)|<+\infty$ and
\[x(n)=A(\theta^{n-1}\omega)x(n-1), \quad  \text{for $n\le m$.}\] Clearly, $\mathcal S_\omega(m)$ and $\mathcal U_\omega(m)$ are subspaces of $X$. We will write $G_\omega(n, k)$ instead of $G_{\beta, \omega}(n,k)=G_{-\beta, \omega}(n,k)$. Furthermore, let
\[
P_{\omega}^{s}(m):=G_{\omega}(m,m), \quad m\in \Z.
\]

\begin{lemma}
\label{lm-SU-subspace}
For any $\omega \in \widetilde{\Omega}$ and $m\in \Z$, $P_{\omega}^{s}(m)$ is a projection on $X$ such that
$\mathcal{R}(P_{\omega}^{s}(m))=\mathcal{S}_{\omega}(m)$ and
$\mathcal{N}(P_{\omega}^{s}(m))=\mathcal{U}_{\omega}(m)$.
\end{lemma}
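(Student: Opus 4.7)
The plan is to work directly from the definition $P_\omega^s(m)z = G_\omega(m,m)z = (L_{\beta,\omega}^{-1}\tilde f_{z,m})(m) = (L_{-\beta,\omega}^{-1}\tilde f_{z,m})(m)$ (the last equality by Lemma~\ref{L2}), exploiting the fact that for every $z\in X$ the sequence $y(n):=G_\omega(n,m)z$ simultaneously lies in $\ell^\infty_{1,\beta(\omega)}(\Z)$ and in $\ell^\infty_{1,-\beta(\omega)}(\Z)$, hence satisfies $|y(n)|\le C e^{-\beta(\omega)|n|}$, and solves the homogeneous recursion $y(n)=A(\theta^{n-1}\omega)y(n-1)$ for every $n\ne m$.

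\textbf{Range identification.} First I would show $\mathcal R(P_\omega^s(m))\subset \mathcal S_\omega(m)$: for $n\ge m$ the sequence $y(n)=G_\omega(n,m)z$ equals $U_\omega(n,m)P_\omega^s(m)z$, and the membership of $y$ in $\ell^\infty_{1,-\beta(\omega)}(\Z)$ (giving $|y(n)|\le Ce^{-\beta(\omega)n}$) makes $\{U_\omega(n,m)P_\omega^s(m)z\}_{n\ge m}$ bounded. For the converse inclusion, given $v\in \mathcal S_\omega(m)$, I would build the candidate
\[
y(n):=\begin{cases} U_\omega(n,m)v & n\ge m,\\ 0 & n<m,\end{cases}
\]
verify by a direct check at $n=m$ and $n\ne m$ that $y(n)-A(\theta^{n-1}\omega)y(n-1)=\tilde f_{v,m}(n)$, and note that $y\in \ell^\infty_{1,\beta(\omega)}(\Z)$ because $y$ is bounded and $\beta(\omega)>0$ kills $e^{-\beta(\omega)n}$ on $\{n\ge m\}$. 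Uniqueness of the $\ell^\infty_{1,\beta(\omega)}$-solution forces $y(n)=G_\omega(n,m)v$, so in particular $P_\omega^s(m)v=y(m)=v$. This simultaneously establishes $\mathcal S_\omega(m)\subset \mathcal R(P_\omega^s(m))$ and the idempotence $(P_\omega^s(m))^2=P_\omega^s(m)$, since $P_\omega^s(m)z\in\mathcal S_\omega(m)$ and $P_\omega^s(m)$ fixes $\mathcal S_\omega(m)$ pointwise.

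\textbf{Kernel identification.} For $\mathcal N(P_\omega^s(m))\subset \mathcal U_\omega(m)$, take $z$ with $G_\omega(m,m)z=0$, set $x(n):=G_\omega(n,m)z$, and define $\tilde x(n):=x(n)$ for $n\le m-1$ and $\tilde x(m):=A(\theta^{m-1}\omega)x(m-1)=x(m)-z=-z$. The homogeneous identity $x(n)=A(\theta^{n-1}\omega)x(n-1)$ for $n\le m-1$ makes $\tilde x$ satisfy the homogeneous recursion on $\{n\le m\}$, while the two-sided decay $|x(n)|\le Ce^{-\beta(\omega)|n|}$ gives boundedness; thus $-z\in\mathcal U_\omega(m)$, hence $z\in\mathcal U_\omega(m)$. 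For the reverse inclusion, pick $z\in\mathcal U_\omega(m)$ with associated bounded backward orbit $x(\cdot)$, and this time set
\[
y(n):=\begin{cases} -x(n) & n\le m-1,\\ 0 & n\ge m.\end{cases}
\]
A case-by-case check at $n<m$, $n=m$, $n>m$ confirms $y(n)-A(\theta^{n-1}\omega)y(n-1)=\tilde f_{z,m}(n)$, and $y\in \ell^\infty_{1,-\beta(\omega)}(\Z)$ since $y$ is bounded and supported on $\{n\le m-1\}$ where $e^{\beta(\omega)n}$ decays. Uniqueness in $\ell^\infty_{1,-\beta(\omega)}$ (which is legal by Lemma~\ref{L2}, the reason we need both sign choices of $\lambda$) yields $y(n)=G_\omega(n,m)z$ and in particular $P_\omega^s(m)z=y(m)=0$.

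\textbf{Anticipated obstacle.} The subtlety is that the natural truncated extensions $y$ used in the converse inclusions are \emph{not} in general in both weighted spaces: the extension on $\{n\ge m\}$ only sits in $\ell^\infty_{1,\beta(\omega)}$ and the extension on $\{n\le m-1\}$ only sits in $\ell^\infty_{1,-\beta(\omega)}$. This is precisely why hypothesis (S1) is imposed for both $\lambda=\beta$ and $\lambda=-\beta$, and why Lemma~\ref{L2}, which identifies $G_{\beta,\omega}$ with $G_{-\beta,\omega}$, is indispensable: without it one could not conclude that the same operator $G_\omega(n,m)$ represents both truncated solutions and therefore agrees with $P_\omega^s(m)$ at $n=m$.
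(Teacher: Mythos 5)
Your proof is correct, and for three of the four inclusions it follows the paper's own argument closely: the two truncated candidate sequences for the range, and the ``shift and negate'' device to put a kernel vector into $\mathcal U_\omega(m)$. Where you genuinely diverge is the inclusion $\mathcal U_\omega(m)\subset\mathcal N(P_\omega^s(m))$. You construct the explicit sequence $y$ supported on $\{n\le m-1\}$ from the bounded backward orbit, verify that it solves the nonhomogeneous recursion with input $\tilde f_{z,m}$, check that it sits in $\ell^\infty_{1,-\beta(\omega)}(\Z)$, and invoke the uniqueness in (S1) (together with Lemma~\ref{L2}) to read off $P_\omega^s(m)z=y(m)=0$. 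The paper instead argues indirectly: it first reduces the inclusion to showing $\mathcal S_\omega(m)\cap\mathcal U_\omega(m)=\{0\}$ (using that $P_\omega^s(m)$ is already known to be a projection with range $\mathcal S_\omega(m)$ and kernel contained in $\mathcal U_\omega(m)$, so $X=\mathcal S_\omega(m)\oplus\mathcal N(P_\omega^s(m))$), and then glues a bounded two-sided homogeneous orbit and kills it by a \emph{second} direct appeal to the hypothesis (S2) on the $\ell^\infty_{|1,\beta|}$ pair. Your route is slightly more economical and more symmetric with the $\mathcal S_\omega(m)\subset\mathcal R$ direction: the use of (S2) is confined entirely to Lemma~\ref{L2}, where it identifies $G_{\beta,\omega}$ with $G_{-\beta,\omega}$, and thereafter only the two admissibility pairs with $\lambda=\pm\beta$ are needed. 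Your ``anticipated obstacle'' paragraph correctly pinpoints why both sign choices of $\lambda$ and Lemma~\ref{L2} are essential: each truncated candidate lives in only one of the two one-sided weighted spaces, and it is Lemma~\ref{L2} that lets the single operator $G_\omega$ (hence $P_\omega^s(m)$) speak for both.
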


\begin{proof}[{\bf Proof}]
For any fixed $v\in \mathcal S_\omega(m)$, define $x=(x(n))_{n\in \Z}\subset X$ by
\[
x(n) =\begin{cases}
U_\omega(n,m)v & \text{for $n\ge m$;}\\
0 & \text{for $n<m$.}
\end{cases}
\]
Note that \[ x(n)=A(\theta^{n-1}\omega)x(n-1)+\tilde f_{v,m}(n),\quad  \text{for $n\in \Z$.}\] In addition,
\[
\sup_{n\in \Z}(|x(n)|e^{-\beta(\omega)n})=\sup_{n\ge m}(|x(n)|e^{-\beta(\omega)n})\le e^{\beta (\omega)|m|}\sup_{n\ge m}|x(n)|<+\infty.
\]
Hence, $x\in \ell^\infty_{1, \beta(\omega)}(\Z)$. We conclude that $L_{\beta, \omega}^{-1}\tilde f_{v, m}=x$
and therefore
$P_\omega^{s}(m)v=v$.
Consequently, $\mathcal S_\omega(m) \subset \mathcal{R}(P_{\omega}^{s}(m))$.

Now, choose an arbitrary $v\in X$ and let $y:=L_{-\beta, \omega}^{-1}\tilde f_{v, m}$. Then, $y(n)=U_\omega(n, m)y(m)$ for $n\ge m$. Moreover,
\[
\sup_{n\ge m}|y(n)|\le e^{\beta(\omega)|m|}\sup_{n\ge m}(|y(n)| e^{\beta (\omega)n})\le  e^{\beta(\omega)|m|} \|y\|_{\ell_{1, -\beta(\omega)}^\infty(\Z)}<+\infty.
\]
Thus, $P_\omega^s(m) v=y(m)\in \mathcal S_\omega(m)$. We conclude that $P_{\omega}^{s}(m)$ is a projection onto $S_\omega(m)$.

In addition, if we assume $P_\omega^s(m)v=0$, then $y(m)=0$ and consequently,
\[
-A(\theta^{m-1}\omega)y(m-1)=v \quad \text{and} \quad y(n)=A(\theta^{n-1}\omega)y(n-1), \ \forall n\le m-1.
\]
Since $y\in \ell_{1, \beta (\omega)}^\infty(\Z)$, we have that $v\in \mathcal U_\omega(m)$. Thus, $\mathcal N(P_\omega^s(m))\subset \mathcal U_\omega(m)$. In order to prove that $\mathcal U_\omega(m) \subset \mathcal N(P_\omega^s(m))$, it is sufficient to show that
$\mathcal{S}_{\omega}(m)\cap \mathcal{U}_{\omega}(m)=\emptyset$.
For any fixed $v\in \mathcal{S}_{\omega}(m)\cap \mathcal{U}_{\omega}(m)$, let $(x(n))_{n\le m}\subset X$ satisfy that
$x(m)=v$, $x(n)=A(\theta^{n-1}\omega)x(n-1)$ for $n\le m$ and $\sup_{n\le m}|x(n)|<+\infty$. We define $z=(z(n))_{n\in \Z}\subset X$ as
\[
z(n)=\begin{cases}
U_\omega(n,m)v & \text{for $n\ge m$;}\\
x(n) & \text{for $n<m$.}
\end{cases}
\]
Then, $\sup_{n\in \Z}|z(n)|<+\infty$ and thus $z\in \ell_{|1, \beta(\omega)|}^\infty(\Z)$. Moreover,
\[
z(n)-A(\theta^{n-1}\omega)z(n-1)=0, \quad \forall n\in \Z.
\]
Hence, $z=0$ and thus $z(m)=v=0$. The proof is completed.
\end{proof}

\begin{lemma}\label{inv2}
For $(\omega, m)\in \widetilde{\Omega}\times \Z$,
\[
A(\theta^{m}\omega)\mathcal S_\omega(m)\subset \mathcal S_\omega (m+1) \quad \text{and} \quad A(\theta^{m}\omega)\mathcal U_\omega(m)\subset \mathcal U_\omega (m+1).
\]
\end{lemma}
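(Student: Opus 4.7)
The plan is to verify both inclusions directly from the definitions of $\mathcal{S}_\omega(m)$ and $\mathcal{U}_\omega(m)$, using nothing more than the cocycle identity for $U_\omega$.

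For the stable inclusion, I would take an arbitrary $v\in \mathcal{S}_\omega(m)$ and set $w:=A(\theta^m\omega)v$. The key observation is that
\[
U_\omega(n,m+1)A(\theta^m\omega)=U_\omega(n,m),\qquad \text{for } n\ge m+1,
\]
which is immediate from the definition of $U_\omega$ as a telescoping product (applying $A(\theta^m\omega)$ first just supplies the missing factor). Consequently,
\[
\sup_{n\ge m+1}|U_\omega(n,m+1)w|=\sup_{n\ge m+1}|U_\omega(n,m)v|\le \sup_{n\ge m}|U_\omega(n,m)v|<+\infty,
\]
so $w\in \mathcal{S}_\omega(m+1)$, as required.

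For the unstable inclusion, I would take $v\in \mathcal{U}_\omega(m)$ together with an associated bounded backward solution $(x(n))_{n\le m}$ satisfying $x(m)=v$ and the recursion $x(n)=A(\theta^{n-1}\omega)x(n-1)$ for $n\le m$. The natural move is to extend this sequence to $n\le m+1$ by declaring
\[
\tilde{x}(n):=x(n) \text{ for } n\le m,\qquad \tilde{x}(m+1):=A(\theta^m\omega)v.
\]
The recursion $\tilde{x}(n)=A(\theta^{n-1}\omega)\tilde{x}(n-1)$ then holds for all $n\le m+1$: for $n\le m$ this is the hypothesis on $x$, and at $n=m+1$ it holds by the very definition of $\tilde{x}(m+1)$. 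Boundedness is preserved since only one new term was added, hence $\sup_{n\le m+1}|\tilde{x}(n)|<+\infty$. This witnesses $A(\theta^m\omega)v\in \mathcal{U}_\omega(m+1)$.

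There is no real obstacle here; the statement is essentially a consistency check reflecting the time-shift structure baked into the definitions of $\mathcal{S}_\omega$ and $\mathcal{U}_\omega$. The only points that require any attention are the cocycle identity $U_\omega(n,m+1)A(\theta^m\omega)=U_\omega(n,m)$ on the stable side and, on the unstable side, the fact that prepending a single term to a bounded sequence keeps it bounded while the required one-step recursion at the new endpoint is automatic.
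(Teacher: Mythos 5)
Your proof is correct and matches the paper's argument essentially verbatim: the same cocycle identity $U_\omega(n,m+1)A(\theta^m\omega)=U_\omega(n,m)$ handles the stable inclusion, and the same one-term extension of the bounded backward solution handles the unstable one.
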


\begin{proof}[{\bf Proof}]
For any fixed $v^{s}\in \mathcal S_\omega(m)$, $\sup_{n\ge m}|U_\omega(n,m)v^{s}|<+\infty$. Since
$U_\omega(n, m+1)A(\theta^m \omega)v^{s}=U_\omega(n,m)v^{s}$, we have that \[\sup_{n\ge m+1}|U_\omega(n, m+1)A(\theta^m \omega)v^{s} |<+\infty.\]
Consequently, $A(\theta^m \omega)v^{s}\in \mathcal S(m+1)$.

For any fixed $v^{u}\in \mathcal U(m)$, there exists a sequence $(x(n))_{n\le m}\subset X$ such that $x(m)=v^{u}$, $x(n)=A(\theta^{n-1}\omega)x(n-1)$ for $n\le m$ and $\sup_{n\le m}|x(n)|<+\infty$.
Define a sequence $(x'(n))_{n\le m+1}\subset X$ as
\[
x'(n)=\begin{cases}
x(n) & \text{for $n\le m$;}\\
A(\theta^m \omega)v^{u} & \text{for $n=m+1$.}
\end{cases}
\]
Then, $x'(m+1)=A(\theta^m \omega)v^{u}$, $x'(n)=A(\theta^{n-1}\omega)x'(n-1)$ for $n\le m+1$ and $\sup_{n\le m+1}|x'(n)|<+\infty$. Hence, $A(\theta^m \omega)v^{u} \in \mathcal U_\omega(m+1)$.
\end{proof}

\begin{lemma}\label{I}
$
A(\theta^{m}\omega)\rvert_{\mathcal U_\omega(m)}\colon \mathcal U_\omega(m)\to \mathcal U_\omega (m+1)
$
is invertible
for $(\omega, m)\in \widetilde{\Omega}\times \Z$.
\end{lemma}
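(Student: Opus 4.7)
The plan is to prove both injectivity and surjectivity of $A(\theta^m\omega)|_{\mathcal U_\omega(m)}$ directly from the definition of $\mathcal U_\omega(\cdot)$, invoking only the admissibility of the pair in (S2) at the single moment where we need to rule out a nonzero kernel. Well-definedness of the map into $\mathcal U_\omega(m+1)$ is already granted by Lemma~\ref{inv2}.

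For \textbf{surjectivity}, I would take any $v'\in\mathcal U_\omega(m+1)$ and unpack its defining data: there is a sequence $(y(n))_{n\le m+1}\subset X$ with $y(m+1)=v'$, $y(n)=A(\theta^{n-1}\omega)y(n-1)$ for $n\le m+1$, and $\sup_{n\le m+1}|y(n)|<+\infty$. Setting $v:=y(m)$, the recursion at $n=m+1$ gives $A(\theta^m\omega)v=v'$, and the restricted sequence $(y(n))_{n\le m}$ exhibits $v\in\mathcal U_\omega(m)$ by its defining properties (still bounded, still satisfying the homogeneous recursion, and passing through $v$ at time $m$).

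For \textbf{injectivity}, I would assume $v\in\mathcal U_\omega(m)$ satisfies $A(\theta^m\omega)v=0$, pick a sequence $(x(n))_{n\le m}$ witnessing $v\in\mathcal U_\omega(m)$ (so $x(m)=v$, $x(n)=A(\theta^{n-1}\omega)x(n-1)$ for $n\le m$, and $\sup_{n\le m}|x(n)|<+\infty$), and extend it to a two-sided sequence $z=(z(n))_{n\in\Z}$ by
\[
z(n):=\begin{cases} x(n) & n\le m,\\ 0 & n>m.\end{cases}
\]
The crucial verification is that $z$ satisfies the homogeneous equation $z(n)=A(\theta^{n-1}\omega)z(n-1)$ for every $n\in\Z$: for $n\le m$ this is immediate from the definition of $x$, for $n\ge m+2$ both sides are zero, and at the critical index $n=m+1$ the equation reduces to $0=A(\theta^m\omega)v$, which holds by hypothesis. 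Since $z$ is bounded on $\Z$, clearly $z\in \ell^\infty_{|1,\beta(\omega)|}(\Z)$.

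At this point I would invoke assumption (S2): the proper admissibility of the pair $(\ell_{|1/K,\beta|}^{\infty}(\widetilde{\Omega}\times \Z),\ell_{|1,\beta|}^{\infty}(\widetilde{\Omega}\times \Z))$ forces the unique element of $\ell^\infty_{|1,\beta(\omega)|}(\Z)$ solving~\eqref{eqn-NH} with $f\equiv 0$ to be the trivial sequence (since $0$ is such a solution), hence $z\equiv 0$ and in particular $v=z(m)=0$. The hardest part, such as it is, is simply recognizing that the extension $z$ satisfies the homogeneous equation at the gluing index $n=m+1$, which is exactly where the assumption $A(\theta^m\omega)v=0$ is consumed; everything else is bookkeeping about the defining sequences of $\mathcal U_\omega(m)$ and $\mathcal U_\omega(m+1)$.
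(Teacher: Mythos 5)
Your proof is correct and follows essentially the same route as the paper: surjectivity by truncating a witnessing sequence for an element of $\mathcal U_\omega(m+1)$, and injectivity by extending a witnessing sequence by zeros, observing that the assumption $A(\theta^m\omega)v=0$ makes the homogeneous recursion hold at the gluing index $n=m+1$, and then invoking the uniqueness part of (S2).
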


\begin{proof}[{\bf Proof}]
For any fixed $w\in \mathcal U_\omega (m+1)$, there exists a sequence $(x(n))_{n\le m+1}\subset X$ such that $x(m+1)=w$, $x(n)=A(\theta^{n-1}\omega)x(n-1)$ for $n\le m+1$ and $\sup_{n\le m+1}|x(n)|<+\infty$.
Hence, $x(m)\in \mathcal U_\omega(m)$ and $A(\theta^{m}\omega)x(m)=w$. Therefore, $A(\theta^{m}\omega)\rvert_{\mathcal U_\omega(m)}\colon \mathcal U_\omega(m)\to \mathcal U_\omega (m+1)$ is surjective.

Assume now that $A(\theta^{m}\omega)v=0$ for $v\in  \mathcal U_\omega(m)$.
Since $v\in \mathcal U_\omega(m)$, there exists a sequence  $(x(n))_{n\le m}\subset X$ such that $x(m)=v$, $x(n)=A(\theta^{n-1}\omega)x(n-1)$ for $n\le m$ and
$\sup_{n\le m}|x(n)|<+\infty$. We define $z=(z(n))_{n\in \Z}\subset X$ as
\[
z(n)=\begin{cases}
x(n) & \text{for $n\le m$;}\\
0 & \text{for $n>m$.}
\end{cases}
\]
Then, $z\in \ell_{|1, \beta(\omega)|}^\infty (\Z)$ and $z(n)=A(\theta^{n-1}\omega)z(n-1)$ for $n\in \Z$. Hence, $z=0$ and thus $v=z(m)=0$. We conclude that  $A(\theta^{m}\omega)\rvert_{\mathcal U_\omega(m)}\colon \mathcal U_\omega(m)\to \mathcal U_\omega (m+1)$ is injective and the
proof of the lemma is completed.
\end{proof}

\begin{lemma}
\label{lm-Green-funct}
For $\omega \in \widetilde{\Omega}$,
\[
G_{\omega}(n,m) =\begin{cases}
U_\omega(n,m)P_\omega^s(m) & \text{for $n\ge m$;}
\\
-U_{\omega}(n,m)(\Id-P_{\omega}^{s}(m)) & \text{for $n<m$.}
\end{cases}
\]
\end{lemma}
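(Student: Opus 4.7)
I will fix $\omega\in\widetilde\Omega$, $k\in\Z$, and $z\in X$, and analyze the sequence $x:=L_{\beta,\omega}^{-1}\tilde f_{z,k}$, which by Lemma~\ref{L2} equals $L_{-\beta,\omega}^{-1}\tilde f_{z,k}$, so it lies in both $\ell^\infty_{1,\beta(\omega)}(\Z)$ and $\ell^\infty_{1,-\beta(\omega)}(\Z)$. By definition, $G_\omega(n,k)z=x(n)$, and $x$ satisfies $x(n)-A(\theta^{n-1}\omega)x(n-1)=\tilde f_{z,k}(n)$ for every $n\in\Z$. Also recall $x(k)=G_\omega(k,k)z=P_\omega^s(k)z$.

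\textbf{Case $n\ge m$.} Because $\tilde f_{z,k}(n)=0$ for $n>k$, the forward recursion immediately gives $x(n)=U_\omega(n,k)x(k)=U_\omega(n,k)P_\omega^s(k)z$ for $n\ge k$, which is exactly the asserted formula.

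\textbf{Case $n<m$.} Set $w:=(\Id-P_\omega^s(k))z$. By Lemma~\ref{lm-SU-subspace}, $w\in\mathcal U_\omega(k)$, so there exists a sequence $(u(j))_{j\le k}\subset X$ with $u(k)=w$, $u(j)=A(\theta^{j-1}\omega)u(j-1)$ for $j\le k$, and $\sup_{j\le k}|u(j)|<+\infty$ (such a sequence is uniquely determined by Lemma~\ref{I}, and the quantity $U_\omega(n,k)w$ for $n<k$ is by definition $u(n)$). The evaluation at $n=k$ of the equation for $x$ yields $A(\theta^{k-1}\omega)x(k-1)=x(k)-z=P_\omega^s(k)z-z=-w$. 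Now define $v\colon\Z\to X$ by $v(j):=x(j)+u(j)$ for $j\le k-1$ and $v(j):=0$ for $j\ge k$. A direct check using the three cases $j\le k-1$, $j=k$, $j\ge k+1$ shows that $v$ satisfies the homogeneous equation $v(j)=A(\theta^{j-1}\omega)v(j-1)$ on all of $\Z$; the only non-routine case is $j=k$, where one uses $A(\theta^{k-1}\omega)x(k-1)=-w$ together with $A(\theta^{k-1}\omega)u(k-1)=u(k)=w$. Moreover $v\in\ell^\infty_{|1,\beta(\omega)|}(\Z)$: boundedness of $u$ on $\{j\le k\}$ handles the $u$-contribution, while the inclusion $\ell^\infty_{1,\beta(\omega)}(\Z)\subset\ell^\infty_{|1,\beta(\omega)|}(\Z)$ handles the $x$-contribution.

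\textbf{Conclusion and main obstacle.} Applying the uniqueness statement in hypothesis (S2) to $v$ (which is a bounded homogeneous solution in the two-sided weighted class) gives $v\equiv 0$, hence $x(j)=-u(j)$ for $j\le k-1$, which is precisely $G_\omega(n,k)z=-U_\omega(n,k)(\Id-P_\omega^s(k))z$ for $n<k$. The main technical point is the bookkeeping at the junction $j=k$: one has to correctly split $z=P_\omega^s(k)z+(\Id-P_\omega^s(k))z$, verify that prepending the unstable bounded backward orbit $-u$ to the zero sequence on $[k,\infty)$ yields a homogeneous solution on all of $\Z$, and then invoke uniqueness in the \emph{two-sided} class $\ell^\infty_{|1,\beta(\omega)|}$ rather than the one-sided classes $\ell^\infty_{1,\pm\beta(\omega)}$, which is precisely why hypothesis (S2) is needed in addition to (S1).
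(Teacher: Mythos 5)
Your proof is correct and follows essentially the same route as the paper: fix $z$ and $k$, set $x:=L^{-1}_{\pm\beta,\omega}\tilde f_{z,k}$, and read the Green function off the recursion $x(n)-A(\theta^{n-1}\omega)x(n-1)=\tilde f_{z,k}(n)$, splitting at the index $k$. The forward case $n\ge k$ is identical. For $n<k$, the paper's one-line justification implicitly compares $x(j)$ with the unique bounded backward orbit of $-(\Id-P_\omega^s(k))z$, which can be closed by noting that $(x(j))_{j\le k-1}$ is itself such a bounded backward orbit (so $x(k-1)\in\mathcal U_\omega(k-1)$) and then applying the injectivity in Lemma~\ref{I} step by step; you instead glue $x+u$ against the zero sequence on $[k,\infty)$ to produce a homogeneous solution in $\ell^\infty_{|1,\beta(\omega)|}(\Z)$ and invoke the two-sided uniqueness in (S2). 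Both arguments are sound; your variant reaches for slightly heavier machinery than strictly necessary, but the bookkeeping at the junction $j=k$ is handled correctly, and it is a clean, self-contained way to close the backward case.
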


\begin{proof}[{\bf Proof}]
For any fixed $v\in X$, let  $y:=L_{-\beta, \omega}^{-1}\tilde f_{v, m}$. Then, $G_\omega(n,m)v=y(n)$ for $n\in \Z$. In particular, $P_{\omega}^{s}(m)v=G_\omega(m,m)v=y(m)$.
Since
$
y(m)-v=A(\theta^{m-1}\omega)y(m-1)$ and  $y(n)=A(\theta^{n-1}\omega)y(n-1)$ for $n\neq m$,
the desired claim follows.
\end{proof}

\begin{lemma}\label{EST}
There exists
a $\theta$-invariant random variable $\rho_{\beta} \colon \Omega \to (0, +\infty)$ such that
$\|G_{\omega}(n,m)\|\leq \rho_{\beta}(\omega)K(\theta^{m}\omega)e^{-\beta(\omega)|n-m|}$
for $\omega \in \widetilde{\Omega}$ and  $n,m \in \mathbb{Z}$.
\end{lemma}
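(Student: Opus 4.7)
The plan is to obtain the bound by combining the two estimates available from hypothesis (S1), using the identification established in Lemma~\ref{L2}.

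First I would recall that by inequality~\eqref{G} applied with $\lambda=\beta$ and $\lambda=-\beta$, for every $\omega\in\widetilde\Omega$ and all $n,m\in\mathbb Z$ we have
\begin{equation*}
\|G_{\beta,\omega}(n,m)\|\le \gamma_{\beta}(\omega)K(\theta^{m}\omega)e^{\beta(\omega)(n-m)},\qquad
\|G_{-\beta,\omega}(n,m)\|\le \gamma_{-\beta}(\omega)K(\theta^{m}\omega)e^{-\beta(\omega)(n-m)}.
\end{equation*}
By Lemma~\ref{L2}, both operators coincide with $G_\omega(n,m)$, so each of these inequalities is valid for $\|G_\omega(n,m)\|$.

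Next I would split into the two cases determined by the sign of $n-m$. When $n\ge m$ we have $|n-m|=n-m$, so the second estimate (with $\lambda=-\beta$) directly gives $\|G_\omega(n,m)\|\le \gamma_{-\beta}(\omega)K(\theta^{m}\omega)e^{-\beta(\omega)|n-m|}$. When $n<m$ we have $|n-m|=m-n$, hence $e^{\beta(\omega)(n-m)}=e^{-\beta(\omega)|n-m|}$, and the first estimate (with $\lambda=\beta$) yields $\|G_\omega(n,m)\|\le \gamma_{\beta}(\omega)K(\theta^{m}\omega)e^{-\beta(\omega)|n-m|}$.

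Finally I would set $\rho_\beta(\omega):=\max\{\gamma_\beta(\omega),\gamma_{-\beta}(\omega)\}$. Since both $\gamma_\beta$ and $\gamma_{-\beta}$ are $\theta$-invariant by hypothesis (S1), so is $\rho_\beta$, and the above two cases together give the required uniform bound. There is no genuine obstacle here: the proof is essentially bookkeeping of the two one-sided Green-function estimates, with Lemma~\ref{L2} supplying the crucial point that the same operator $G_\omega(n,m)$ inherits both bounds.
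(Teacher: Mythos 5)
Your proof is correct and follows essentially the same route as the paper: apply the bound~\eqref{G} for $\lambda=\pm\beta$, invoke Lemma~\ref{L2} to transfer both estimates to the common operator $G_\omega(n,m)$, split by the sign of $n-m$, and take $\rho_\beta=\max\{\gamma_\beta,\gamma_{-\beta}\}$. No discrepancies.
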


\begin{proof}[{\bf Proof}]
By (\ref{G}), the definition of $G_{\omega}$ and Lemma \ref{L2}, we get that
\begin{eqnarray*}
\lVert G_{\omega}(n, m)\rVert
= \lVert G_{-\beta, \omega}(n, m)\rVert
&\leq & \gamma_{-\beta}(\omega)K(\theta^m (\omega))e^{\beta (\omega)m}e^{-\beta (\omega)n}
\\
&=&\gamma_{-\beta}(\omega)K(\theta^m (\omega))e^{-\beta (\omega)(n-m)},
\ \forall \ n\geq m,
\end{eqnarray*}
and
\begin{eqnarray*}
\lVert G_{\omega}(n, m)\rVert
= \lVert G_{\beta, \omega}(n, m)\rVert
&\leq & \gamma_{\beta}(\omega)K(\theta^m (\omega))e^{-\beta (\omega)m}e^{\beta (\omega)n}
\\
&=&\gamma_{\beta}(\omega)K(\theta^m (\omega))e^{-\beta (\omega)(m-n)},
\ \forall \ n< m.
\end{eqnarray*}
Let \[\rho_{\beta}(\omega):=\max\{\gamma_{\beta}(\omega), \gamma_{-\beta}(\omega)\}.\]
Then, $\rho_{\beta}$ is  a positive $\theta$-invariant random variable.
It follows that
\[
\|G_{\omega}(n,m)\|\leq \rho_{\beta}(\omega)K(\theta^{m}\omega)e^{-\beta(\omega)|n-m|},
\]
for all $m, n\in \Z$. This completes the proof of the lemma.
\end{proof}

Let
\[
\Pi^s (\omega):=P_\omega^s(0), \quad \omega \in \widetilde{\Omega}.
\]
We claim that $\Phi$ has a tempered exponential dichotomy on $\widetilde{\Omega}$
with the projection $\Pi^s (\omega)$.
In fact,
by Lemma~\ref{SM}, we obtain that $\omega \mapsto \Pi^s(\omega)$ is strongly measurable
because $P_{\omega}^{s}(0)=G_{\omega}(0,0)$.
By the definitions of $S_\omega(m)$ and $\mathcal U_\omega(n)$,
one can check that
\[
\mathcal S_\omega(n)=\mathcal S_{\theta^n \omega}(0) \quad \text{and} \quad \mathcal U_\omega(n)=\mathcal U_{\theta^n \omega}(0)
\]
for $\omega \in \widetilde{\Omega}$ and $n\ge 0$.
This together with Lemma \ref{lm-SU-subspace} implies that $P_\omega^s(n)=P_{\theta^{n}\omega}^s(0)$. It follows that
\begin{eqnarray}
\label{Pis-Ps-1}
\Pi^s (\theta^{n}\omega)
=P_\omega^s(n).
\end{eqnarray}
By Lemmas \ref{lm-SU-subspace} and \ref{inv2} and the cocycle property of $\Phi$, we have
\begin{eqnarray*}
U_{\omega}(n,0)P_\omega^s(0)=P_\omega^s(n)U_{\omega}(n,0),\ \forall \ n\in \mathbb{Z}_{+}.
\end{eqnarray*}
This together with (\ref{Pis-Ps-1}) implies that ~\eqref{inv} holds because $U_{\omega}(n,0)=\Phi(n,\omega)$ for $n\in \mathbb{Z}_{+}$.
Furthermore,
let $\Pi^u (\omega):=\Id-\Pi^s (\omega)$.
By Lemma~\ref{I} and the cocycle property of $\Phi$, we get that
$\Phi(n,\omega)|_{\mathcal R(\Pi^u(\omega))}:\mathcal R(\Pi^u(\omega))\rightarrow \mathcal R(\Pi^u(\theta^{n}\omega))$
is an isomorphism.
By Lemmas \ref{SM} and \ref{lm-Green-funct},
$\Phi(n,\omega)\Pi^{u}(\omega)$ is strongly measurable on $\omega$
 for $n\in \mathbb{Z}_{-}$.
The property (iv) in the definition of the tempered exponential dichotomy follows readily from Lemmas \ref{lm-Green-funct} and \ref{EST}.
This is proves the claim and therefore Theorem \ref{theorem-admi-texd} is true.
\hfill$\square$

\begin{remark}
\label{re-assum-gamma}
Note that the linear operator $L_{\lambda, \omega}^{-1}\colon \ell_{1/K(\omega),\lambda(\omega)}^{\infty}(\mathbb{Z}) \to \ell_{1,\lambda(\omega)}^{\infty}(\mathbb{Z})$, discussed just after Lemma \ref{CO}, is bounded.
So, in the assumption (S1) of Theorem \ref{theorem-admi-texd},
the existence of a positive variable $\gamma_{\lambda}$ which satisfies
(\ref{io})
is not an additional assumption.
\end{remark}

\begin{remark}
\label{re-examp}
Let $\Omega=[0,1)$,  $\mathcal{B}(\Omega)$ be the Borel $\sigma$-algebra, and $\mathbb{P}$
be the Lebesgue measure.
Then $(\Omega,\mathcal{B}(\Omega),\mathbb{P})$ is a probability space.
For a fixed irrational number $q\in \Omega$, consider
$$
\theta: \Omega\rightarrow \Omega, ~~~\omega\mapsto \omega+q~ ({\rm
mod} ~1).
$$
Let
$\Omega=\bigcup_{i=1}^{+\infty}\Omega_{i}$, where
$\Omega_{i}:=[1-\frac{1}{i},1-\frac{1}{i+1})$.
Let $A:\Omega \rightarrow \mathcal{L}(\mathbb{R}^{3})$ be defined piecewise by the $3\times 3$ matrix
\begin{eqnarray*}
 A(\omega):={\rm diag}\{e^{-(i+1)^{2}+i^{2}}, 1/2, e^{(i+1)^{2}-i^{2}}\},
\ \forall~\omega\in\Omega_{i},~i\in \mathbb{Z}_{1}^{+}.
\end{eqnarray*}
It follows that $A$ is
$(\mathcal{B}(\Omega), \mathcal{L}(\mathbb{R}^{3}))$-measurable and further
$A$ generates a linear random dynamical system on the phase space $X=\mathbb{R}^{3}$.
It is shown in \cite[pp.4044-4045]{ZhouLuZhang13JDE}
that this random dynamical system
has a tempered exponential dichotomy with the projection
$\Pi^{s}(\omega):={\rm diag}\{1,1,0\}$,
exponent $\alpha(\omega)=\ln 2$ and bound $K(\omega)\equiv 1$,
but does not satisfy the assumptions of  MET.
\end{remark}

Theorem \ref{theorem-admi-texd} shows that
admissibility of the three pairs of input and output classes obtained in Theorem \ref{theorem-texd-admi}
implies existence of a tempered exponential dichotomy for general random dynamical systems without the assumptions of MET.
In the following section 5, for MET-systems,
we will give a simper admissibility condition with no Lyapunov norms in the sense
that only one special pair of input and output classes is needed.


\section{Converse via MET}\label{T3}

Throughout this section, we continue to assume that $(\Omega, \mathcal F, \mathbb P, \theta)$ is an invertible $\mathbb P$-preserving dynamical system. In addition, we assume that
\begin{itemize}
\item $\Omega$ is a compact metric space and $\mathcal F$ is the associated Borel $\sigma$-algebra;
\item $\mathbb P$ is ergodic, i.e. $\mathbb P(E)\in \{0, 1\}$ for each $E\in \mathcal F$ such that $\theta^{-1}(E)=E$.
\end{itemize}
In order to recall the version of MET established in~\cite{Gonzalez-Tokman-Quas-2014},
we introduce several concepts.
For a fixed $A\in \mathcal{L}(X)$, the constant
\begin{eqnarray*}
  \kappa_{ic}(A):=\inf\{r: A(B_{X}(0,1))~{\rm has~ a~ finite~ cover~ by~ balls~ of~ radius~ r}\}
\end{eqnarray*}
is called the {\it index of compactness} of $A$, where $B_{X}(0,1)$ denotes the unit ball in $X$ centered at $0$.
Let $\Phi$ be a measurable linear cocycle over $\theta$ with the generator $A$ such that
\begin{equation}\label{intmet}
\int_M \ln^+ \lVert A(\omega)\rVert\, d\mathbb P(\omega)<\infty.
\end{equation}
It follows essentially from the subadditive ergodic theorem (see Lemmas 2.4 and 2.5 of \cite{Gonzalez-Tokman-Quas-2014})
that there exist $-\infty \le \kappa^*\le \lambda^*<\infty$ such that
\begin{eqnarray*}
\kappa^*=\lim_{n\rightarrow \infty}\frac{\ln \kappa_{ic}(\Phi(n,\omega))}{n}
 ~~~{\rm and}~~~
\lambda^*=\lim_{n\rightarrow \infty}\frac{\ln \|\Phi(n,\omega)\|}{n}
\end{eqnarray*}
for $\mathbb P$-a.e. $\omega \in \Omega$,
which are called
the {\it index of compactness}
and
the {\it maximal Lyapunov exponent}
of $\Phi$ (with respect to $\mathbb P$) respectively.
If $\kappa^{*}<\lambda^{*}$, we say that
 $\Phi$ is  {\it quasi-compact}. Observe that when $\Phi$ is compact, i.e. when $A(\omega)$ is a compact operator for each $\omega \in \Omega$, we have that $\kappa^*=-\infty$ since $\kappa_{ic}(\Phi(n, \omega))=0$ for each $n\in \N$ and $\omega \in \Omega$. Consequently, a compact cocycle is quasi-compact if and only if $\lambda^*\in \mathbb R$.

Then, we are ready to state the version of MET established in~\cite{Gonzalez-Tokman-Quas-2014}.

{\bf Multiplicative Ergodic Theorem (\cite{Gonzalez-Tokman-Quas-2014})}.
{\it Suppose that $\Phi$ is a measurable quasi-compact linear cocycle over $\theta$ whose generator $A$ satisfies (\ref{intmet}).
Then one of the following alternatives holds:
\\
{\bf (M-i)} there is a finite sequence of numbers
$
\lambda_1> \lambda_2 >\ldots >\lambda_k> \kappa^{*}
$
and a decomposition
$
X=V(\omega)\oplus \bigoplus_{i=1}^{k}E_i(\omega)
$
for $\mathbb P$-a.e. $\omega \in \Omega$
such that
\[
A(\omega)E_i(\omega)=E_i(\theta(\omega)), \forall i=1, \ldots, k, \   \ A(\omega)V(\omega)\subset V(\theta(\omega))
\]
and
$
\lim_{n\to \infty}\frac 1 n \ln | \Phi(n,\omega)v|=\lambda_i
$
for $v\in E_i(\omega)\setminus \{0\}$ and $i\in \{1, \ldots, k\}$. Moreover, $\lim_{n\to \infty}\frac 1 n \ln | \Phi(n,\omega)v| \le \kappa^*$ for $\mathbb P$-a.e. $\omega \in \Omega$ and $v\in V(\omega)$.
Finally, $E_i(\omega)$ is a finite-dimensional subspace of $X$ for $i\in \{1, \ldots, k\}$;
\\
{\bf (M-ii)} there is an infinite sequence of numbers
$
\lambda_1 >\lambda_2 >\ldots >\lambda_k >\ldots
$
satisfying $\lim_{k\to \infty}\lambda_k=\kappa^{*}$ and
 a decomposition
$
X=V(\omega)\oplus \bigoplus_{i=1}^{\infty}E_i(\omega)
$
for $\mathbb P$-a.e. $\omega \in \Omega$
such that
\[
A(\omega)E_i(\omega)=E_i(\theta(\omega)), \forall i\in \N, \  A(\omega)V(\omega)\subset V(\theta(\omega))
\]
and
$
\lim_{n\to \infty}\frac 1 n \ln | \Phi(n,\omega)v| =\lambda_i
$
for $v\in E_i(\omega)\setminus \{0\}$ and $i\in \N$. Moreover, $\lim_{n\to \infty}\frac 1 n \ln | \Phi(n,\omega)v| \le \kappa^*$ for $\mathbb P$-a.e. $\omega \in \Omega$ and $v\in V(\omega)$.
Finally, $E_i(\omega)$ is a finite-dimensional subspace of $X$ for $i\in \N$.
}

\begin{remark}
The numbers $\lambda_i$ in the above statement
are called \emph{Lyapunov exponents} of the cocycle $\Phi$ with respect to $\mathbb P$.
The above is version of MET for infinite-dimensional random dynamical systems.
MET was established earlier in (\cite{Oseledets,Arnold}) for the finite-dimensional case.
Before \cite{Gonzalez-Tokman-Quas-2014}, MET for the infinite-dimensional case
was given in~\cite{Lian-Lu} with an additional assumption that the cocycle is injective.
\end{remark}

In the context of MET,
if all the Lyapunov exponents of $\Phi$ with respect to $\mathbb P$ are nonzero,
then in both alternatives there is a unique $k^{*}\in \mathbb{Z}_{+}$ ($k^{*}\leq k$ in the Alternative  {\bf (M-i)})
such that
$\lambda_{i}>0$ for all $i\leq k^{*}$ and $\lambda_{i}<0$ for all $i> k^{*}$
and
\[X=\widetilde{V}(\omega)\oplus \bigoplus_{i=1}^{k^{*}}E_i(\omega),\]
where \[\widetilde{V}(\omega)=
\begin{cases}
V(\omega)\oplus \bigoplus_{i=k^{*}+1}^{k}E_i(\omega)
 & \text{if Alternative {\bf (M-i)} holds;}\\
V(\omega)\oplus \bigoplus_{i=k^{*}+1}^{\infty}E_i(\omega) & \text{if Alternative {\bf (M-ii)}  holds.}
 \end{cases}
\]

\begin{proposition}(\cite[Proposition 3.2.]{Backes-Dragicevic-2019})
\label{TED}
Suppose that $\Phi$ is a measurable quasi-compact linear cocycle over $\theta$
whose generator $A$ satisfies~\eqref{intmet}.
Moreover, assume that all Lyapunov exponents of $\Phi$ with respect to $\mathbb P$ are nonzero.
Then, there exist $\alpha>0$,
 a tempered (with respect to $\theta$)
 random variable $K\colon \Omega \to (0, +\infty)$ and a $\theta$-invariant  $\widetilde{\Omega}\subset \Omega$ measurable set  of full measure such that
 \begin{align*}
 \|\Phi(n,\omega)\Pi^{s}(\omega)\|& \leq
 K(\omega)e^{-\alpha n},
 & \forall~ \omega\in \widetilde{\Omega},n\in \mathbb{Z}_{+},
\\
 \|\Phi(n,\omega)\Pi^{u}(\omega)\|& \leq
 K(\omega)e^{\alpha n},
 & \forall~ \omega\in\widetilde{\Omega},n\in \mathbb{Z}_{-},
\end{align*}
where
$\Phi(n,\omega):=(\Phi(\theta^{n}\omega, -n)|_{\mathcal{R}(\Pi^{u}(\theta^{n}\omega))})^{-1}$
for $\omega \in \widetilde{\Omega}$ and $n\in \mathbb{Z}_{-}$,
$\Pi^u(\omega):=\Id-\Pi^s(\omega)$, and  $\Pi^{s}(\omega)$ is the projection on $X$ such that
  \[\mathcal{R}(\Pi^{s}(\omega))=\widetilde{V}(\omega) \quad \text{and} \quad
  \mathcal{N}(\Pi^{s}(\omega))=\bigoplus_{i=1}^{k^{*}}E_i(\omega).\]
\end{proposition}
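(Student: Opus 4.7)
The plan is to obtain the projection from the Oseledets splitting produced by MET and then convert the asymptotic exponential estimates furnished by MET into the uniform exponential bound with a tempered constant that the definition of tempered exponential dichotomy requires.

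First I would apply MET to $\Phi$ to obtain the invariant decomposition $X=V(\omega)\oplus\bigoplus_i E_i(\omega)$ on a $\theta$-invariant full-measure set. Since by assumption no Lyapunov exponent equals zero and $\kappa^{*}<0$ or $\lambda_k>0$ etc. is handled by the definition of $k^{*}$, the exponents split cleanly into $\lambda_1>\cdots>\lambda_{k^{*}}>0$ and everything else $<0$. I would then define the projection $\Pi^{s}(\omega)$ with range $\widetilde V(\omega)$ and kernel $\bigoplus_{i=1}^{k^{*}}E_i(\omega)$ exactly as stated; measurability of $\omega\mapsto\Pi^{s}(\omega)$ is part of the conclusion of MET, and the invariance identity $\Pi^{s}(\theta\omega)A(\omega)=A(\omega)\Pi^{s}(\omega)$ together with the invertibility of $A(\omega)\rvert_{\mathcal R(\Pi^{u}(\omega))}$ are built into the MET statement (the unstable subspaces $E_i(\omega)$ are finite-dimensional and mapped isomorphically onto $E_i(\theta\omega)$ by $A(\omega)$).

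Next I would fix $\alpha>0$ with $\alpha<\min\{\lambda_{k^{*}},-\max(\lambda_{k^{*}+1},\kappa^{*})\}$ (interpreting $\lambda_{k^{*}+1}=-\infty$ when no such exponent exists). On the finite-dimensional unstable subspace, the restriction $A(\omega)\rvert_{\mathcal R(\Pi^{u}(\omega))}$ is invertible and satisfies $\lim_n \tfrac{1}{n}\ln|\Phi(-n,\omega)v|=-\lambda_i$ for $v\in E_i(\omega)\setminus\{0\}$, so the operator norm estimate $\|\Phi(n,\omega)\Pi^{u}(\omega)\|\le C_{u}(\omega)e^{\alpha n}$ for $n\le 0$ is immediate with $C_{u}(\omega):=\sup_{n\le 0}e^{-\alpha n}\|\Phi(n,\omega)\Pi^{u}(\omega)\|<\infty$. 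On the stable subspace, the subadditive ergodic theorem applied to $\ln\|\Phi(n,\omega)\rvert_{\widetilde V(\omega)}\|$ gives $\limsup_n \tfrac{1}{n}\ln\|\Phi(n,\omega)\rvert_{\widetilde V(\omega)}\|\le \max(\lambda_{k^{*}+1},\kappa^{*})<-\alpha$, which yields $\|\Phi(n,\omega)\Pi^{s}(\omega)\|\le C_{s}(\omega)e^{-\alpha n}$ with $C_{s}(\omega):=\sup_{n\ge 0}e^{\alpha n}\|\Phi(n,\omega)\Pi^{s}(\omega)\|<\infty$. Setting $K(\omega):=\max(C_{s}(\omega),C_{u}(\omega))$ gives both inequalities.

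The main obstacle, and the only delicate point, is showing that the random variable $K$ obtained above is tempered with respect to $\theta$. For this I would use the cocycle identity together with the invariance of $\Pi^{s}$: for any $m\in\Z$,
\begin{equation*}
 \|\Phi(n,\theta^{m}\omega)\Pi^{s}(\theta^{m}\omega)\Phi(m,\omega)x\|=\|\Phi(n+m,\omega)\Pi^{s}(\omega)x\|,
\end{equation*}
which allows $C_{s}(\theta^{m}\omega)$ to be bounded in terms of $C_{s}(\omega)$ times factors involving $\|\Phi(\pm m,\omega)\rvert_{\widetilde V(\omega)}\|$ (and analogously for $C_{u}$ using the finite-dimensional unstable piece and its inverse). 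By MET the latter grow at most exponentially with rate $\max(\lambda_{k^{*}+1},\kappa^{*})$ or $\lambda_{k^{*}}$ along the orbit, and combined with the factor $e^{\mp\alpha m}$ one obtains subexponential growth, i.e. $\lim_{m\to\pm\infty}\tfrac{1}{m}\ln K(\theta^{m}\omega)=0$ almost surely. This verifies condition (iv) of the tempered exponential dichotomy with the stated exponent and bound, completing the proof.
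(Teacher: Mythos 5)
This proposition is cited from \cite{Backes-Dragicevic-2019} (Proposition~3.2 there); the present paper does not give its own proof, so there is no internal argument to compare against. Your high-level plan (define $\Pi^{s}$ from the Oseledets splitting, fix $\alpha$ inside the spectral gap, take the sup over the orbit to define $K$, then verify temperedness) is the natural one and matches the route of the cited reference.

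However, the step you yourself flag as the "main obstacle" — temperedness of $K$ — contains a genuine gap on the stable side. You propose to control $C_{s}(\theta^{m}\omega)$ using $C_{s}(\omega)$ and "factors involving $\|\Phi(\pm m,\omega)\rvert_{\widetilde V(\omega)}\|$". But the cocycle is \emph{not} invertible on $\widetilde V(\omega)$: for $m>0$ the operator $\Phi(-m,\omega)\rvert_{\widetilde V(\omega)}$ is undefined, and for $m>0$ the identity
$\Phi(n,\theta^{m}\omega)\Pi^{s}(\theta^{m}\omega)\Phi(m,\omega)=\Phi(n+m,\omega)\Pi^{s}(\omega)$
cannot be solved for $\Phi(n,\theta^{m}\omega)\Pi^{s}(\theta^{m}\omega)$ because $\Phi(m,\omega)\rvert_{\widetilde V(\omega)}$ need not be injective (let alone have a bounded inverse). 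The cocycle identity only yields the one-sided inequality $K_{s}(\omega)\le\max\bigl(\|\Pi^{s}(\omega)\|,\,e^{\alpha}\|A(\omega)\Pi^{s}(\omega)\|\,K_{s}(\theta\omega)\bigr)$, which goes in the wrong direction and does not by itself rule out exponential growth of $K_{s}(\theta^{m}\omega)$ as $m\to+\infty$. The actual proof requires an additional ingredient that your sketch omits: either a tempered bound on the projection norms $\|\Pi^{s}(\omega)\|$ (the Oseledets "angle estimate"), together with Arnold's tempering-kernel lemma to upgrade an a.e.-finite subexponential bound to a genuinely tempered one, or a Kingman-type subadditive ergodic theorem with tempered error terms. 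Without one of these the passage from "$K_{s}(\omega)<\infty$ a.e." to "$K_{s}$ is tempered" does not follow. A secondary (smaller) gap: you assert that the subadditive ergodic theorem applied to $\ln\|\Phi(n,\omega)\rvert_{\widetilde V(\omega)}\|$ gives a limit $\le\max(\lambda_{k^{*}+1},\kappa^{*})$; the subadditive theorem only gives existence of the limit, and identifying it with $\max(\lambda_{k^{*}+1},\kappa^{*})$ requires applying MET to the restricted cocycle $\Phi\rvert_{\widetilde V}$ and matching its Oseledets data with that of $\Phi$ — a true but nontrivial step that should be spelled out.
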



Now we are ready to give our second  converse of Theorem \ref{theorem-texd-admi}. When $\beta \equiv 0$, we  will write $\ell_{1/K(\omega)}^\infty(\Z)$ instead of $\ell_{1/K(\omega), \beta (\omega)}^\infty(\Z)$ and
$\ell_1^\infty(\Z)$ instead of $\ell_{1, \beta(\omega)}^\infty(\Z)$.  We adapt the same convention for $l^\infty_{1/K, \beta}(\widetilde{\Omega}\times \Z)$ and $l^\infty_{1, \beta}(\widetilde{\Omega}\times \Z)$.

\begin{theorem}\label{t2}
Suppose that $\Phi$ is a measurable quasi-compact  linear cocycle
over $\theta$ whose generator $A$ satisfies~\eqref{intmet}.
Furthermore, assume that there exist a $\theta$-invariant  Borel set $\widetilde{\Omega}\subset \Omega$ of full measure and
 random variables
 $\zeta,\eta: \Omega \to (0, \infty)$ such that
 the pair $(\ell_{1/\zeta}^{\infty}(\widetilde{\Omega}\times \mathbb{Z}),\ell_{1}^{\infty}(\widetilde{\Omega} \times \mathbb{Z}))$
 is measurably properly admissible for $\Phi$ and  that for any fixed $\omega\in \widetilde{\Omega}$
and $f\in \ell_{1/\zeta(\omega)}^{\infty}(\mathbb{Z})$
\begin{eqnarray}
\label{adm3}
\|x\|_{\ell_{1}^{\infty}(\mathbb{Z})}\leq \eta(\omega)\|f\|_{\ell_{1/\zeta(\omega)}^{\infty}(\mathbb{Z})},
\end{eqnarray}
 where $x\in \ell_{1}^{\infty}(\mathbb{Z})$ is the unique solution of the equation
\begin{equation}\label{adm}
x(n+1)=A(\theta^{n}\omega)x(n)+f(n+1).
\end{equation}
 Then, $\Phi$ has a tempered exponential dichotomy.
 \end{theorem}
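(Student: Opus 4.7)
The plan is to apply MET to obtain the Oseledets decomposition of $\Phi$, to use the admissibility hypothesis~\eqref{adm3} to show that every Lyapunov exponent of $\Phi$ is nonzero and that $\kappa^* < 0$, and then to invoke Proposition~\ref{TED} to conclude.

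First I would apply MET to $\Phi$, using quasi-compactness together with the integrability condition~\eqref{intmet}, to obtain on a $\theta$-invariant full-measure subset of $\widetilde{\Omega}$ the Oseledets decomposition $X = V(\omega)\oplus \bigoplus_{i} E_i(\omega)$ with Lyapunov exponents $\lambda_1 > \lambda_2 > \ldots > \kappa^*$ and a residual subspace $V(\omega)$ on which the cocycle has asymptotic growth rate at most $\kappa^*$. The remaining task is to rule out $\lambda_i = 0$ for every $i$ and to rule out $\kappa^* \geq 0$; once this is done, Proposition~\ref{TED} directly produces the tempered exponential dichotomy with projection onto $\widetilde{V}(\omega) = V(\omega)\oplus \bigoplus_{\lambda_i<0}E_i(\omega)$ along $\bigoplus_{\lambda_i>0}E_i(\omega)$.

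The heart of the proof is to adapt the induced-system technique of~\cite{Dragicevic-Sasu-Sasu} from the uniform to the nonuniform (tempered) setting. Suppose for contradiction that some $\lambda_i$ equals zero, so that $E_i(\omega)$ is a finite-dimensional $A$-invariant subspace on which the restriction of $\Phi$ is an isomorphism whose iterates grow only subexponentially in both directions. Using finite-dimensionality, I would make a measurable selection of a unit vector $v(\omega) \in E_i(\omega)$ and build a measurable input $f \colon \widetilde{\Omega}\times \Z \to X$ whose values lie along the induced orbit $\Phi(n,\omega)v(\omega)$, rescaled so that $f(\omega,\cdot) \in \ell^{\infty}_{1/\zeta(\omega)}(\Z)$. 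The unique $\ell^{\infty}_1$-solution of~\eqref{adm} for this input is then explicitly computable in terms of the scalar orbit and the weight $\zeta$, and a careful asymptotic estimate shows that its $\ell^{\infty}_1(\Z)$-norm must strictly exceed $\eta(\omega)\|f\|_{\ell^{\infty}_{1/\zeta(\omega)}(\Z)}$, contradicting~\eqref{adm3}. A parallel argument, applied to a measurably selected vector in $V(\omega)$ having near-zero growth, rules out $\kappa^* \geq 0$; in both cases one is effectively forcing the neutral/noncontracting invariant direction to admit bounded solutions of~\eqref{adm} for inputs it cannot handle.

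The main obstacle is the explicit construction of the offending input $f$. In the uniform setting of~\cite{Dragicevic-Sasu-Sasu}, one exploits two-sided uniform bounds on cocycle iterates along a neutral invariant subspace to produce bounded inputs with unbounded outputs; here only tempered (subexponential) bounds on $|\Phi(n,\omega)v(\omega)|$ are available, so $f$ must be tuned simultaneously to the nonuniform cocycle behaviour along $E_i(\omega)$ or $V(\omega)$ and to the weight $\zeta(\theta^n\omega)$, all while remaining measurable in $\omega$. Once this construction is in place, the contradictions force every $\lambda_i$ to be nonzero and $\kappa^*$ to be strictly negative, and Proposition~\ref{TED} then delivers the tempered exponential dichotomy.
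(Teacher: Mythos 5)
Your high-level plan matches the paper's endgame: show that every Lyapunov exponent of $\Phi$ is nonzero and then invoke Proposition~\ref{TED}. But the crucial middle step, which you yourself flag as the main obstacle, is left unresolved, and the direct construction you sketch would in fact fail. From~\eqref{adm3} and a closed-graph argument one obtains only a Green-function bound of the form $\lVert G_\omega(n,k)\rVert\le\eta(\omega)\,\zeta(\theta^k\omega)$, whose right-hand side varies with $k$ in an entirely uncontrolled way: $\zeta$ is an arbitrary positive random variable (not assumed tempered or integrable) and $\eta(\omega)$ is just a fixed $\omega$-dependent constant. If along the putative zero-exponent direction $\zeta(\theta^k\omega)$ grows faster than the subexponential fluctuations of $\lvert\Phi(k,\omega)v(\omega)\rvert$, then every input you can build from that orbit while staying in $\ell^\infty_{1/\zeta(\omega)}(\Z)$ is forced to decay so rapidly that its output has no reason to violate~\eqref{adm3}. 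Moreover, when $\lambda_i=0$ the restriction of $\Phi$ to $E_i(\omega)$ has no dichotomy, so the ``explicitly computable'' formula you invoke for the unique bounded solution is not available; and the parallel argument you propose inside $V(\omega)$ faces both this problem and a measurable-selection problem in an infinite-dimensional subspace where MET gives only an upper bound $\limsup\le\kappa^*$ rather than an exact exponent.

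What the paper does, and what your sketch omits, is precisely the machinery that neutralizes this nonuniformity. First, it runs the proof of Theorem~\ref{theorem-admi-texd} with $\beta\equiv0$ to extract from the admissibility alone a splitting $X=S(\omega)\oplus U(\omega)$ with a projection bound $\lVert\Pi(\omega)\rVert\le\eta(\omega)\zeta(\omega)$. Second, Luzin's theorem produces a compact set $F$ of positive measure on which $\zeta$, $\eta$, $\zeta\circ\theta$ and $\lVert A(\cdot)\rVert$ are uniformly bounded, Poincar\'e recurrence defines the first-return (induced) cocycle over $F$, and on that induced system the bounds become genuinely uniform; the classical trick of feeding in the truncated orbit, whose output at time $n_0$ picks up a factor $n_0+1$, then forces exponential contraction on $S$ and exponential expansion on $U$ in induced time. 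Finally, Kac's lemma together with Birkhoff's ergodic theorem converts these uniform induced-time rates into strictly negative (resp.\ positive) Lyapunov exponents on $S(\omega)$ (resp.\ $U(\omega)$) in original time, and since $X=S(\omega)\oplus U(\omega)$ this rules out any zero exponent. Without this Luzin/recurrence/ergodic-averaging bridge from a pointwise, $\omega$- and $k$-dependent bound to a uniform one, and without the admissibility-based splitting on which the orbit-doubling argument operates, the contradiction you describe cannot be reached.
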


\begin{remark}
Before providing the proof, we would like to observe that, in comparison with Theorem~\ref{theorem-admi-texd},
we require only one admissibility condition.
Furthermore,
we do not require $\zeta$ to be tempered or  $\eta$ to be  $\theta$-invariant.
\end{remark}

{\bf Proof of Theorem~\ref{t2}.}
By arguing as in the proof of Theorem~\ref{theorem-admi-texd} (with $\beta \equiv 0$), for each $\omega \in \widetilde{\Omega}$ we can construct a splitting
\begin{equation}
\label{X=S+U-dirsum}
X=S(\omega)\oplus U(\omega)
\end{equation}
such that
\begin{equation}
\label{est-pro-Ps-2}
\lVert \Pi(\omega)\rVert \le \eta(\omega)\zeta(\omega),
\end{equation}
where $\Pi(\omega)\colon X\to S(\omega)$ is the projection associated to~\eqref{X=S+U-dirsum}. In addition,
 $A(\omega)\rvert_{U(\omega)} \colon U(\omega) \to U(\theta\omega)$ is an isomorphism. We recall from the proof of Theorem~\ref{theorem-admi-texd} that
\[
S(\omega)=\bigg \{ v\in X:\sup_{n\ge 0} | \Phi(n, \omega)v|<+\infty \bigg \}.
\]
Moreover, $U(\omega)$ consists of all $v\in X$ with the property that there exists a sequence $(x(n))_{n\le 0}\subset X$ such that $x(0)=v$, $x(n)=A(\theta^{n-1}\omega)x(n-1)$ for $n\le 0$ and $\sup_{n\le 0}|x(n)|<+\infty$.
Finally, recall that~\eqref{est-pro-Ps-2} follows from~\eqref{G}.

By Luzin's theorem (see Lemma {\bf A1} in the Appendix or ~\cite[Theorem 7.10, p.217]{Folland}),
there exists a compact $F\subset \widetilde{\Omega}$ with $\mathbb P (F)>0$
such that $\zeta, \eta$, $\zeta\circ \theta$ and $\omega \mapsto \lVert A(\omega)\rVert$ are continuous on $F$.  Hence, there exists $L>0$ such that
\begin{equation}\label{b}
\sup_{\omega \in F} \max \{ \zeta(\omega), \eta(\omega)\} \le L
\end{equation}
and
\begin{equation}\label{bnew}
\sup_{\omega \in F} \max \{\zeta(\theta \omega), \lVert A(\omega)\rVert \}\le L.
\end{equation}
Applying Poincare's recurrence theorem (see~Lemma {\bf A2} or \cite[Theorem 4.1.19, p.142]{Katok-Hasselblatt-1995})
to both $\theta$ and $\theta^{-1}$,
we get
$\mathbb P(F')=\mathbb P(F)$, where
\[
F'=\{ \omega \in F: \text{$\theta^n \omega \in F$ for infinitely many $n>0$ and $n<0$} \}.
\]
Define $\bar{\theta} \colon F' \to F'$ as
\[
\bar{\theta} (\omega)=\theta^{\tau_1(\omega)} (\omega) \quad \text{for $\omega \in F'$,}
\]
where
\[
\tau_1(\omega)=\min \{n\ge 1: \theta^n \omega\in F'\}.
\]
Furthermore, for $\omega \in F'$ let
\[
\bar A(\omega):=\Phi(\tau_1(\omega),\omega) \quad \text{and} \quad B(\omega)=\bar A(\omega)\Pi(\omega).
\]
Finally, for $\omega \in F'$ and $n\in \Z_{+}$, let
\[
\bar {\Phi}(n,\omega):=\begin{cases}
\Phi(\tau_n (\omega),\omega) & \text{if $n\ge 1$;}\\
\Id &\text{if $n=0$,}
\end{cases}
\]
where
\[
\tau_n (\omega)=\sum_{j=0}^{n-1}\tau_1 (\bar{\theta}^j(\omega)).
\]
Note that
\[
\bar{\theta}^n (\omega)=\begin{cases}
\theta^{\tau_n (\omega)}(\omega) & \text{for $n\ge 1$;}\\
\omega & \text{for $n=0$,}
\end{cases}
\]
and
\[
\bar{\Phi}(n+m,\omega)=\bar{\Phi}(\bar{\theta}^n (\omega), m)\bar{\Phi}(n,\omega), \quad \text{for $\omega \in F'$ and $m, n\in \Z_{+}$.}
\]

\begin{lemma}\label{LL}
There exists a constant $h_{1}>0$ such that
for arbitrarily given $\omega \in F'$ and $(s(n))_{n\ge 0} \subset X$ satisfying $\sup_{n\ge 0}| s(n)|<\infty$,
there exists $(\tilde x(n))_{n\ge 0}\subset X$ with $\tilde x(0)=s(0)$
such that
\begin{eqnarray}
&&\tilde x(n+1)=B(\bar{\theta}^n (\omega))\tilde x(n)+s(n+1), \quad \text{for $n\in \Z_{+}$},
\label{adm4}
\\
&&\sup_{n\ge 0}| \tilde x(n)| \le h_{1}\sup_{n\ge 0}| s(n)|.
\label{Ne}
\end{eqnarray}
\end{lemma}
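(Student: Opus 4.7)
My plan is to reduce Lemma~\ref{LL} to a single invocation of the admissibility hypothesis~\eqref{adm3} applied to a forcing supported on the return orbit $\{\tau_n(\omega)\}_{n\ge 0}$. First I would define $\tilde x(n)$ unambiguously by the recursion with $\tilde x(0):=s(0)$, decompose it along the pointwise splitting $X=S(\bar\theta^n\omega)\oplus U(\bar\theta^n\omega)$ as $\tilde x(n)=\tilde x^s(n)+\tilde x^u(n)$, and exploit the following: $B(\bar\theta^n\omega)=\bar A(\bar\theta^n\omega)\Pi(\bar\theta^n\omega)$ has image in $S(\bar\theta^{n+1}\omega)$ by Lemma~\ref{inv2}, so applying $\Id-\Pi(\bar\theta^{n+1}\omega)$ to the recursion yields $\tilde x^u(n)=(\Id-\Pi(\bar\theta^n\omega))s(n)$ for all $n\ge 0$, while applying $\Pi(\bar\theta^{n+1}\omega)$ gives
\[
\tilde x^s(n+1)=\bar A(\bar\theta^n\omega)\tilde x^s(n)+s^s(n+1),\qquad \tilde x^s(0)=s^s(0),
\]
with $s^s(m):=\Pi(\bar\theta^m\omega)s(m)$. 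Using $\|\Pi(\omega)\|\le\eta(\omega)\zeta(\omega)\le L^2$ on $F'$ from~\eqref{est-pro-Ps-2} and~\eqref{b}, the unstable piece is immediately bounded by $|\tilde x^u(n)|\le (1+L^2)\sup_{m\ge 0}|s(m)|$, and the stable recursion is solved uniquely by $\tilde x^s(n)=\sum_{m=0}^{n}\Phi(\tau_n-\tau_m,\bar\theta^m\omega)s^s(m)$.

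The core step is to bound this closed form using the admissibility hypothesis. Define the forcing
\[
f(k):=\begin{cases} s^s(m) & \text{if } k=\tau_m(\omega),\ m\ge 0,\\ 0 & \text{otherwise,}\end{cases}
\]
and its finite-support truncations $f_N(k):=f(k)\mathbf 1_{\{k\le \tau_{N-1}(\omega)\}}$. Since $\theta^{\tau_m}\omega=\bar\theta^m\omega\in F'$ for every $m\ge 0$, both $\zeta(\theta^k\omega)$ at the support of $f$ and $|s^s(m)|$ are controlled uniformly by $L$ and $L^2\sup_m|s(m)|$ respectively, so $\|f_N\|_{\ell^\infty_{1/\zeta(\omega)}(\mathbb Z)}\le \|f\|_{\ell^\infty_{1/\zeta(\omega)}(\mathbb Z)}\le L^3\sup_m|s(m)|$. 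Hypothesis~\eqref{adm3} then produces unique bounded solutions $x_N,x^*\in \ell^\infty_1(\mathbb Z)$ of the forced equations associated with $f_N$ and $f$, both with norms at most $L^4\sup_m|s(m)|$.

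The final step identifies $x^*(\tau_n)$ with $\tilde x^s(n)$ through a truncation-limit. Because $f_N$ has finite support, linearity together with the Green-function identity from Lemma~\ref{lm-Green-funct} gives $x_N(n)=\sum_{m=0}^{N-1}G_\omega(n,\tau_m)s^s(m)$, and since $s^s(m)\in S(\bar\theta^m\omega)$ the unstable Green-function contributions vanish, reducing this to $x_N(n)=\sum_{0\le m<N,\,\tau_m\le n}\Phi(n-\tau_m,\bar\theta^m\omega)s^s(m)$. For fixed $n$ this sum stabilizes once $N>M(n):=\max\{m\ge 0:\tau_m\le n\}$, producing the pointwise limit $y(n):=\sum_{m\ge 0,\,\tau_m\le n}\Phi(n-\tau_m,\bar\theta^m\omega)s^s(m)$ that vanishes for $n<0$ and satisfies $|y(n)|\le \sup_N\|x_N\|\le L^4\sup_m|s(m)|$. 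Direct verification shows $y$ solves the forced equation with $f$, so admissibility uniqueness forces $y=x^*$; evaluating at $\tau_k$ yields $x^*(\tau_k)=\tilde x^s(k)$ and hence $|\tilde x^s(k)|\le L^4\sup_m|s(m)|$. Combined with the unstable estimate, the lemma follows with any $h_1\ge L^4+L^2+1$. The only delicate ingredient is this limiting argument, which is required because $f_N$ does not converge to $f$ in the $\ell^\infty_{1/\zeta(\omega)}(\mathbb Z)$-norm; everything else is bookkeeping in the Luzin constant~$L$.
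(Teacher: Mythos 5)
Your argument is correct, and it arrives at the same $\tilde x$ and the same constant $h_1 = 1 + L^2 + L^4$ as the paper, but it takes a genuinely different route to the key bound on the stable component. The paper works \emph{forward}: it forces the full problem with $f(\tau_k)=s(k)$, invokes admissibility once to get the unique bounded solution $x$, reads off $x(0)-s(0)\in U(\omega)$ directly from the fact that $(x(n))_{n\le -1}$ is a bounded homogeneous trajectory terminating (after one step) at $x(0)-s(0)$, and then defines $z(n)=x(\tau_n)$, $z'(n)=\Pi(\bar\theta^n\omega)z(n)$, verifying the induced recursion by direct computation; the bound on $z'$ falls out of \eqref{adm3} and \eqref{b} without any Green-function bookkeeping. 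You instead work \emph{backward}: you fix $\tilde x$ by the recursion, split it pointwise into $\tilde x^s+\tilde x^u$, derive a closed form for $\tilde x^s$, then match that closed form to the admissibility solution of a forcing built from $s^s(m)=\Pi(\bar\theta^m\omega)s(m)$ via Lemma~\ref{lm-Green-funct} and a truncation-limit argument. Your approach needs the Green-function representation and the observation that $(\Id-P_\omega^s(\tau_m))s^s(m)=0$, whereas the paper's does not, so the paper's is lighter on machinery; on the other hand, your decomposition makes explicit \emph{why} $\tilde x^u(n)=(\Id-\Pi(\bar\theta^n\omega))s(n)$ and isolates the stable recursion cleanly. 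One small remark: once you have the uniform bound $\lVert x_N\rVert_{\ell_1^\infty(\Z)}\le L^4\sup_m|s(m)|$ and the pointwise convergence $x_N(n)\to y(n)$, you already get $|y(n)|\le L^4\sup_m|s(m)|$ for every $n$; identifying $y$ with $x^*$ by uniqueness is tidy but not actually needed for the estimate.
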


\begin{proof}[{\bf Proof}]
For any fixed $\omega \in F'$ and $(s(n))_{n\ge 0} \subset X$ with $\sup_{n\ge 0}|s(n)|<\infty$,
we define $f=(f(n))_{n\in \Z}\subset X$ as
\[
f(n)=\begin{cases}
s(0) & \text{for $n=0$;}\\
s(k) & \text{if $n=\tau_k(\omega)$;}\\
0 & \text{otherwise.}
\end{cases}
\]
By~\eqref{b}, we have that
\begin{equation}\label{322}
\lVert  f\rVert_{\ell_{1/\zeta(\omega)}^{\infty}(\mathbb{Z})}=\sup_{n\ge 0}\big{(}\zeta(\bar{\theta}^n (\omega))| s(n)|)\le L\sup_{n\ge 0}| s(n)|.
\end{equation}
Hence, $f\in \ell_{1/\zeta(\omega)}^{\infty}(\mathbb{Z})$. Thus, there exists a unique $x=(x(n))_{n\in \Z}\in \ell_{1}^{\infty}(\mathbb{Z})$ such that~\eqref{adm} holds.  In particular, we have that
\[
x(0)-A(\theta^{-1}\omega)x(-1)=s(0)
\]
and
\[
x(n)=A(\theta^{n-1}\omega)x(n-1), \quad  \text{for $n\le -1$.}
\]
Hence, $x(0)-s(0)\in U(\omega)$ and therefore
\begin{equation}\label{1816}
\Pi(\omega)x(0)=\Pi(\omega)s(0).
\end{equation}
We now define $(z(n))_{n\ge 0}\subset X$ as
\[
z(n)=\begin{cases}
x(0) & \text{for $n=0$;} \\
x(\tau_n(\omega)) & \text{for $n>0$.}
\end{cases}
\]
It follows from~\eqref{adm} that
\begin{equation}\label{308}
z(n+1)-\bar A(\bar \theta^n (\omega))z(n)=s(n+1), \quad \text{for $n\in \Z_{+}$.}
\end{equation}
Let
\[
z'(n):=\Pi(\bar \theta^n (\omega))z(n), \quad n\in \Z_{+}.
\]
By~\eqref{308}, we have that
\begin{equation}\label{bc}
z'(n+1)-\bar A(\bar \theta^n (\omega)) z'(n)=\Pi (\bar \theta^{n+1}(\omega))s(n+1), \quad \text{for $n\in \Z_{+}$.}
\end{equation}
Furthermore (see~\eqref{1816}),
\begin{equation}\label{z0}
z'(0)=\Pi(\omega)z(0)=\Pi(\omega)x(0)=\Pi(\omega)s(0).
\end{equation}
Moreover, it follows from~\eqref{adm3},  (\ref{est-pro-Ps-2}),  \eqref{b} and~\eqref{322} that
\[
\sup_{n\ge 0}| z'(n)| \le L^2 \sup_{n\ge 0}| z(n)| \le L^2\lVert x\rVert_{\ell_{1}^{\infty}(\mathbb{Z})}
\le L^3\lVert  f\rVert_{\ell_{1/\zeta(\omega)}^{\infty}(\mathbb{Z})} \le L^4 \sup_{n\ge 0}| s(n)|,
\]
which implies that
\begin{equation}\label{3:29}
\sup_{n\ge 0}| z'(n)| \le L^4 \sup_{n\ge 0}| s(n)|.
\end{equation}
Let
\[
\tilde x(n):=z'(n)+(\Id-\Pi(\bar \theta^n (\omega)))s(n), \quad n\in \Z_{+}.
\]
Observe that~\eqref{z0} implies that
\[
\tilde x(0)=\Pi(\omega)s(0)+(\Id-\Pi(\omega))s(0)=s(0).
\]
In addition, \eqref{bc} and~\eqref{z0} give that
\[
z'(n)=\sum_{k=0}^n \bar{\Phi}(n-k,\bar{\theta}^k (\omega))\Pi (\bar{\theta}^k (\omega))s(k), \quad \text{for $n\in \Z_{+}$.}
\]
Hence, we have that
\begin{eqnarray}
\label{xB-tlx=s}
&&\tilde x(n+1)-B(\bar{\theta}^n (\omega)) \tilde x(n)
\\
&=&\sum_{k=0}^{n+1} \bar{\Phi}(n+1-k,\bar{\theta}^k (\omega))\Pi (\bar{\theta}^k (\omega))s(k)\nonumber \\
&&\phantom{=}-\sum_{k=0}^n \bar{\Phi}(n+1-k,\bar{\theta}^k (\omega))\Pi (\bar{\theta}^k (\omega))s(k)\nonumber \\
&&\phantom{=}+(\Id-\Pi(\bar \theta^{n+1} (\omega)))s(n+1)\nonumber\\
&=&\Pi(\bar \theta^{n+1} (\omega))s(n+1)+(\Id-\Pi(\bar \theta^{n+1} (\omega)))s(n+1)\nonumber \\
&=&s(n+1),
\nonumber
\end{eqnarray}
for each $n\in \Z_{+}$. Consequently, \eqref{adm4} holds.  Finally, (\ref{est-pro-Ps-2}), \eqref{b} and~\eqref{3:29} imply that~\eqref{Ne} holds with
\[
h_{1}=1+L^2+L^4>0.
\]
The proof of the lemma is completed.
\end{proof}

\begin{lemma}\label{ww}
There exist $Q>0$ and $\nu>0$ such that
\[
| \bar{\Phi}(n,\omega)v| \le Qe^{-\nu n}|v|, \quad \text{for $\omega \in F'$, $v\in S(\omega)$ and $n>0$.}
\]
\end{lemma}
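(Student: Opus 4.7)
My plan is a Datko-style upgrade from boundedness to exponential decay, applied to the induced cocycle $\bar\Phi$ over $\bar\theta$ on the $\bar\theta$-invariant set $F'$. The crucial point is that Lemma~\ref{LL} supplies a \emph{uniform} admissibility constant $h_{1}$ on $F'$, which is precisely what lets the classical Perron-type trick go through despite the nonuniform nature of the original system.

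First I would establish uniform boundedness of $\bar\Phi$ along $S$. Given $\omega\in F'$ and $v\in S(\omega)$, apply Lemma~\ref{LL} with $s(0):=v$ and $s(k):=0$ for $k\ge 1$. Because $\Phi$ leaves $S$ invariant, so does $\bar A=\Phi(\tau_{1}(\cdot),\cdot)$, and hence $\bar\Phi(n,\omega)v\in S(\bar\theta^{n}\omega)$, so $\Pi(\bar\theta^{n}\omega)$ acts as the identity along the orbit. A short induction using $B(\omega)=\bar A(\omega)\Pi(\omega)$ then identifies the unique solution of the recursion as $\tilde x(n)=\bar\Phi(n,\omega)v$, and Lemma~\ref{LL} gives
\[
|\bar\Phi(n,\omega)v|\le h_{1}|v|\quad\text{for all }n\ge 0,\ \omega\in F',\ v\in S(\omega).
\]

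The main step is to upgrade this to a quantitative contraction by feeding the orbit back into Lemma~\ref{LL} as the forcing term. For a fixed $N\ge 1$, take $s(k):=\bar\Phi(k,\omega)v$ for $0\le k\le N$ and $s(k):=0$ for $k>N$; by the previous bound $\sup_{k}|s(k)|\le h_{1}|v|$. An induction exploiting $\Pi(\bar\theta^{k}\omega)\bar\Phi(k,\omega)v=\bar\Phi(k,\omega)v$ and the cocycle identity shows that the unique solution is $\tilde x(n)=(n+1)\bar\Phi(n,\omega)v$ for $0\le n\le N$. The bound in Lemma~\ref{LL} at $n=N$ then forces
\[
(N+1)|\bar\Phi(N,\omega)v|\le h_{1}^{2}|v|,\qquad\text{i.e.}\qquad |\bar\Phi(N,\omega)v|\le \frac{h_{1}^{2}}{N+1}|v|.
\]

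To conclude, pick $N_{0}$ with $h_{1}^{2}/(N_{0}+1)\le 1/2$; then $|\bar\Phi(N_{0},\omega)v|\le \tfrac{1}{2}|v|$ uniformly for $\omega\in F'$ and $v\in S(\omega)$. The $\bar\theta$-invariance of $F'$ together with the cocycle identity $\bar\Phi(n+m,\omega)=\bar\Phi(m,\bar\theta^{n}\omega)\bar\Phi(n,\omega)$ gives $|\bar\Phi(kN_{0},\omega)v|\le 2^{-k}|v|$; writing a general $n=kN_{0}+r$ with $0\le r<N_{0}$ and using the uniform bound on the remainder yields $|\bar\Phi(n,\omega)v|\le 2h_{1}e^{-\nu n}|v|$ with $\nu=(\ln 2)/N_{0}$ and $Q=2h_{1}$. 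The main obstacle is the middle step: the clever truncated input $s(k)=\bar\Phi(k,\omega)v$ is the combinatorial trick that makes the solution accumulate linearly in $k$ and thereby forces $1/N$-decay of $|\bar\Phi(N,\omega)v|$ from nothing more than the uniform admissibility bound; the other ingredients are routine applications of the cocycle identity and of Step~1.
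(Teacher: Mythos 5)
Your proposal is correct and follows essentially the same approach as the paper's proof: step one applies Lemma~\ref{LL} with a single impulse $s(0)=v$ to obtain the uniform bound $|\bar\Phi(n,\omega)v|\le h_1|v|$, and step two feeds the truncated orbit $s(k)=\bar\Phi(k,\omega)v$ back into Lemma~\ref{LL} so that the solution accumulates to $(n+1)\bar\Phi(n,\omega)v$, yielding the $1/N$-decay and hence a uniform contraction after $N_0$ steps. The only cosmetic difference is your contraction threshold of $1/2$ (giving $Q=2h_1$, $\nu=(\ln 2)/N_0$) versus the paper's $1/e$ (giving $Q=h_1e$, $\nu=1/n_0$), which is immaterial.
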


\begin{proof}[{\bf Proof}]
For any fixed $\omega\in F'$ and  $v\in S(\omega)$,
define $(s(n))_{n\ge 0}\subset X$ by
\[
s(n)=\begin{cases}
v & \text{for $n=0$;}\\
0& \text{for $n\neq 0$.}
\end{cases}
\]
Obviously, $\sup_{n\ge 0}|s(n)|=| v|$. Let $(\tilde x(n))_{n\ge 0}\subset X$ be the associated sequence given by Lemma~\ref{LL}. Observe that
\[
\tilde x(n)=\Phi(\tau_n (\omega),\omega)v, \quad n\in \N.
\]
Hence, \eqref{Ne} implies that
\[
| \Phi(\tau_n (\omega),\omega)v| \le h_{1} | v|, \quad \text{for $n\in \N$.}
\]
Since $v\in S(\omega)$ was arbitrary, we conclude that
\begin{equation}\label{5:08}
| \bar{\Phi}(n,\omega)v| \le h_{1}|v|, \quad \text{for $\omega \in F'$, $v\in S(\omega)$ and $n\in \N$.}
\end{equation}

Let $n_0\in \N$ satisfy
\[
\frac{h_{1}^2}{n_0+1}\le \frac 1 e.
\]
For any fixed $\omega \in F'$ and $v\in S(\omega)$, we define $(\tilde{s}(n))_{n\ge 0}\subset X$ as
\[
\tilde{s}(n)=\begin{cases}
\bar{\Phi}(n,\omega)v & \text{for $0\le n\le n_0$;}\\
0 & \text{for $n>n_0$.}
\end{cases}
\]
By~\eqref{5:08}, we see that $\sup_{n\ge 0}| \tilde{s}(n)| \le h_{1}| v|$.
Let $(\tilde x(n))_{n\ge 0}\subset X$ be the associated sequence given by Lemma~\ref{LL}. Note that
\[
\tilde x(n_0)=\sum_{k=0}^{n_0}\bar{\Phi}(n_0-k,\bar{\theta}^k (\omega))\bar{\Phi}(k,\omega)v=(n_0+1)\bar{\Phi}(n_0,\omega)v.
\]
Hence, \eqref{Ne} implies that
\[
(n_0+1)| \bar{\Phi}(n_0,\omega)v| \le h_{1}\sup_{n\ge 0}| \tilde{s}(n)| \le h_{1}^2| v|,
\]
which implies that
\[
|\bar{\Phi}(n_0,\omega)v|  \le \frac{1}{e}| v|.
\]
Thus,
\begin{equation}\label{6:11}
| \bar{\Phi}(n_0,\omega)v|  \le \frac{1}{e}|v|, \quad \text{for $\omega \in F'$ and $v\in S(\omega)$.}
\end{equation}
It follows from \eqref{5:08} and~\eqref{6:11} that the conclusion of the lemma holds with
$Q=h_{1}e$ and $\nu=\frac{1}{n_0}$.
\end{proof}

\begin{lemma}\label{LE}
For $\mathbb P$-a.e. $\omega \in F'$ and every $v\in S(\omega)$,
\[
\lim_{n\to \infty}\frac 1 n \log | \Phi(n,\omega)v|<0.
\]
\end{lemma}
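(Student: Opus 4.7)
The strategy is to transfer the uniform exponential contraction on the induced cocycle $\bar\Phi$ given by Lemma~\ref{ww} back to the original cocycle $\Phi$, using the asymptotics of the return-time sequence $\tau_n(\omega)$. These asymptotics are controlled by Kac's formula together with Birkhoff's ergodic theorem applied to the induced system $(F',\bar\theta)$, and the ``fill-in'' between consecutive return times is controlled by the integrability hypothesis~\eqref{intmet}.

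First I normalize by setting $\mathbb P_{F'}:=\mathbb P|_{F'}/\mathbb P(F)$, which is $\bar\theta$-invariant and ergodic since $(\Omega,\theta,\mathbb P)$ is ergodic. Birkhoff's theorem applied to $\tau_1$ on $(F',\bar\theta,\mathbb P_{F'})$ gives, for $\mathbb P_{F'}$-a.e.\ $\omega\in F'$,
\[
\frac{\tau_n(\omega)}{n}=\frac{1}{n}\sum_{j=0}^{n-1}\tau_1(\bar\theta^j\omega)\longrightarrow\int_{F'}\tau_1\,d\mathbb P_{F'}=\frac{1}{\mathbb P(F)}=:T,
\]
where the last equality is Kac's lemma. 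For such $\omega$ and any $m\in\N$, let $n=n(m)$ be the unique integer with $\tau_n(\omega)\le m<\tau_{n+1}(\omega)$; then $n/m\to 1/T$ and $\tau_n/m\to 1$. The cocycle identity $\Phi(m,\omega)=\Phi(m-\tau_n,\bar\theta^n\omega)\bar\Phi(n,\omega)$ combined with Lemma~\ref{ww} yields, for $v\in S(\omega)$,
\[
\frac{1}{m}\log|\Phi(m,\omega)v|\le\frac{1}{m}\log\|\Phi(m-\tau_n,\bar\theta^n\omega)\|+\frac{1}{m}\bigl(\log Q-\nu n+\log|v|\bigr),
\]
and the second summand tends to $-\nu/T<0$.

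The remaining task, which is the main obstacle, is to show that the first summand tends to $0$; this is where the hypothesis~\eqref{intmet} enters. By submultiplicativity,
\[
\log\|\Phi(m-\tau_n,\bar\theta^n\omega)\|\le\sum_{j=\tau_n}^{m-1}\log^+\|A(\theta^j\omega)\|,
\]
and Birkhoff's theorem on $(\Omega,\theta,\mathbb P)$ gives $\frac{1}{k}\sum_{j=0}^{k-1}\log^+\|A(\theta^j\omega)\|\to c:=\int\log^+\|A\|\,d\mathbb P$ a.e. Writing
\[
\frac{1}{m}\sum_{j=\tau_n}^{m-1}\log^+\|A(\theta^j\omega)\|=\frac{1}{m}\sum_{j=0}^{m-1}\log^+\|A(\theta^j\omega)\|-\frac{\tau_n}{m}\cdot\frac{1}{\tau_n}\sum_{j=0}^{\tau_n-1}\log^+\|A(\theta^j\omega)\|
\]
and using $\tau_n/m\to 1$ (in particular $\tau_n\to\infty$), both pieces converge to $c$, so the difference tends to $0$. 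Assembling everything,
\[
\limsup_{m\to\infty}\frac{1}{m}\log|\Phi(m,\omega)v|\le-\frac{\nu}{T}<0
\]
on a full-$\mathbb P$-measure subset of $F'$; since the Lyapunov-exponent limit $\lim_{n\to\infty}\tfrac{1}{n}\log|\Phi(n,\omega)v|$ exists by MET, this yields the stated conclusion.
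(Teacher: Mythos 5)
Your proof is correct, but it takes a genuinely different route from the paper. The paper's argument is shorter: it invokes MET up front to assert that $\lim_{n\to\infty}\frac 1 n\log|\Phi(n,\omega)v|$ exists, and then simply evaluates that (already existing) limit along the return-time subsequence $\tau_n(\omega)$, where by definition $\Phi(\tau_n(\omega),\omega)=\bar\Phi(n,\omega)$, so Lemma~\ref{ww} together with $\tau_n(\omega)/n\to 1/\mathbb P(F)$ (Kac plus Birkhoff on the induced system) immediately gives the negative value. Because the full-sequence limit is known to exist, the paper never has to worry about what $\Phi$ does between return times. You instead prove the stronger statement $\limsup_{m\to\infty}\frac1m\log|\Phi(m,\omega)v|\le -\nu/T<0$ directly over all $m$, which forces you to control the ``fill-in'' factor $\Phi(m-\tau_n,\bar\theta^n\omega)$; you handle this correctly via submultiplicativity, the integrability hypothesis~\eqref{intmet}, and two applications of Birkhoff on the ambient system $(\Omega,\theta,\mathbb P)$, showing the fill-in contributes $o(1)$. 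You still invoke MET at the very end (as you must, since the lemma asserts the limit, not merely the limsup, is negative), but your use of it is lighter: you only need the limit's existence to convert a limsup bound into a limit bound, whereas the paper leans on it earlier to license the subsequence substitution. Your version thus buys a negative limsup bound that holds without MET, at the cost of the extra Birkhoff argument for the fill-in; the paper's version is shorter and cleaner in the MET setting where the lemma actually lives.
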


\begin{proof}[{\bf Proof}]
The existence of the limit (for $\mathbb P$-a.e. $\omega \in F'$) follows from MET.
Next, we have that
\begin{eqnarray}
\label{Phi=PF-1}
\lim_{n\to \infty}\frac 1 n \log |\Phi(n,\omega)v|
&=&\lim_{n\to \infty}\frac{1}{\tau_n(\omega)} \log | \Phi(\tau_n(\omega), \omega)v|
\\
&=&\lim_{n\to \infty}\frac{1}{\tau_n(\omega)}\log | \bar{\Phi}(n,\omega)v|
\nonumber\\
&=&\lim_{n\to \infty}\frac{n}{\tau_n(\omega)}\cdot \lim_{n\to \infty}\frac 1 n \log | \bar{\Phi}(n,\omega)v|
\nonumber
\end{eqnarray}
for $\mathbb P$-a.e. $\omega \in F'$ and every $v\in S(\omega)$.
We claim that
\begin{equation}\label{pp}
\lim_{n\to \infty}\frac{\tau_n(\omega)}{n}=\frac{1}{\mathbb P (F)}, \quad \text{for $\mathbb P$-a.e. $\omega \in F'$.}
\end{equation}
In fact, one can check (see~ \cite[Theorem 1.7]{Sarig}) that the measure-preserving system $(F', \mathcal F_{F'}, \mathbb P_{F'}, \bar{\theta})$ is ergodic, where
$\mathcal F_{F'}=\{A\cap F': A\in \mathcal F\}$ and $\mathbb P_{F'}$ is given by
\[
\mathbb P_{F'}(B)=\frac{\mathbb P(B)}{\mathbb P(F')}, \quad B\in \mathcal F_{F'}.
\]
It follows from Kac's lemma (see Lemma~{\bf A4} or \cite[Theorem 2', p.1006]{Kac-1947})
that $\tau_1\in \mathcal{L}^{1}(F',\mathbb P_{F'})$.
By Birkhoff's ergodic theorem (see~Lemma~{\bf A3} or \cite[Theorem 4.1.2, p.136]{Katok-Hasselblatt-1995})
and Kac's lemma, we get that
\begin{eqnarray*}
\lim_{n\to \infty}\frac{\tau_n(\omega)}{n}
=\lim_{n\to \infty}\frac{1}{n}\sum_{j=0}^{n-1}\tau_1 (\bar{\theta}^j(\omega))
=\int_{F'}\tau_1(\omega)d\mathbb P_{F'}
=\frac{1}{\mathbb P(F')},
\end{eqnarray*}
for $\mathbb P$-a.e. $\omega \in F'$.
Since $\mathbb P(F')=\mathbb P(F)$, we have that~\eqref{pp} holds.
It follows from (\ref{Phi=PF-1}) and (\ref{pp}) that
\begin{eqnarray*}
\lim_{n\to \infty}\frac 1 n \log |\Phi(n,\omega)v|
=\mathbb P(F)\cdot \lim_{n\to \infty}\frac 1 n \log |\bar{\Phi}(n,\omega)v|,
\end{eqnarray*}
for $\mathbb P$-a.e. $\omega \in F'$ and every $v\in S(\omega)$.
Now the conclusion of the lemma follows directly from Lemma~\ref{ww}.
\end{proof}

For $\omega \in F'$, let
\[
\tau_1'(\omega):=\min \{n\ge 1: \theta^{-n}\omega\in F'\}.
\]
Furthermore, we define $\tilde \theta \colon F' \to F'$ as
\[
\tilde \theta(\omega)=\theta^{-\tau_1'(\omega)}(\omega), \quad \omega \in F'.
\]
Observe that $\tilde \theta=\bar{\theta}^{-1}$. In addition, for $n\ge 1$ and $\omega \in F'$, let
\[
\tau_n'(\omega):=\sum_{j=0}^{n-1}\tau_1' (\tilde \theta^j (\omega)).
\]
We then have
\[
\tilde \theta^{n} (\omega)=\theta^{-\tau_n'(\omega)}(\omega), \quad \text{for $\omega \in F'$ and $n\ge 1$.}
\]
Finally, let
\[
C(\omega):=\bar{A}(\tilde \theta(\omega))^{-1}(\Id-\Pi(\omega)), \quad \omega \in F'.
\]

\begin{lemma}
There exists a constant $h_{2}>0$ such that
for arbitrarily given $\omega \in F'$ and $(s(n))_{n\ge 0} \subset X$ satisfying $\sup_{n\ge 0}| s(n)|<\infty$
there exists $(\tilde x(n))_{n\ge 0}\subset X$ with $\tilde x(0)=s(0)$ such that
\begin{eqnarray}
\label{adm5}
&&\tilde x(n+1)=C(\tilde{\theta}^n (\omega))\tilde x(n)+s(n+1) \quad \text{for $n\in \Z_{+}$},
\\
\label{Ne2}
&&\sup_{n\ge 0}|\tilde x(n)| \le h_{2}\sup_{n\ge 0}| s(n)|.
\end{eqnarray}
\label{lm14}
\end{lemma}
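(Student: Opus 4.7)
The plan is to mirror the proof of Lemma~\ref{LL} in a dual fashion, exploiting the fact that $C(\tilde\theta^n\omega) = \bar{A}(\tilde\theta^{n+1}\omega)^{-1}(\Id - \Pi(\tilde\theta^n\omega))$ encodes the unstable-part dynamics run backward in $\theta$-time. The key observation is that by choosing the forcing to be supported at the \emph{negative} return times $m_k := -\tau_k'(\omega)$ (instead of the positive return times as in Lemma~\ref{LL}) and vanishing on $\Z_+$, the bounded solution produced by admissibility must lie in $\mathcal S(\omega)$ at $m=0$, which will furnish exactly the initial condition $\tilde x(0)=s(0)$ needed.

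Concretely, given $\omega\in F'$ and a bounded sequence $(s(n))_{n\ge 0}\subset X$, I first define $f\colon\Z\to X$ by $f(m_n)=s(n)$ for $n\ge 0$ (note $m_0=0$) and $f(m)=0$ otherwise. Since $\theta^{m_n}\omega=\tilde\theta^n\omega\in F$, bound~\eqref{b} gives $\zeta(\tilde\theta^n\omega)\le L$, so $\|f\|_{\ell^\infty_{1/\zeta(\omega)}(\Z)}\le L\sup_n|s(n)|$. The admissibility hypothesis together with~\eqref{adm3} then yields a unique bounded $x\in\ell^\infty_1(\Z)$ satisfying~\eqref{adm} with $\|x\|_\infty\le L^2\sup_n|s(n)|$.

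Next, I observe that $f$ vanishes on $\Z_+$, so the equation $x(m+1)=A(\theta^m\omega)x(m)$ holds for every $m\ge 0$; hence $x(m)=\Phi(m,\omega)x(0)$ for $m\ge 0$, and the boundedness of $x$ on $\Z_+$ forces $x(0)\in \mathcal S(\omega)$, i.e., $(\Id-\Pi(\omega))x(0)=0$. Setting $z(n):=x(m_n)$ and iterating~\eqref{adm} over the block $(m_{n+1},m_n]$ (where $f$ is zero except at the right endpoint, where it equals $s(n)$) gives
\[
z(n)=\bar A(\tilde\theta^{n+1}\omega)z(n+1)+s(n),\qquad n\ge 0,
\]
since $\Phi(\tau_1(\tilde\theta^{n+1}\omega),\tilde\theta^{n+1}\omega)=\bar A(\tilde\theta^{n+1}\omega)$.

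Then I project onto the unstable fiber: let $v(n):=(\Id-\Pi(\tilde\theta^n\omega))z(n)$. Because $\bar A(\tilde\theta^{n+1}\omega)$ respects the splitting $X=S\oplus U$ and restricts to an isomorphism $U(\tilde\theta^{n+1}\omega)\to U(\tilde\theta^n\omega)$, applying $(\Id-\Pi(\tilde\theta^n\omega))$ and then inverting $\bar A$ on the unstable subspace rearranges the recursion into
\[
v(n+1)=C(\tilde\theta^n\omega)v(n)-C(\tilde\theta^n\omega)s(n),\qquad v(0)=0,
\]
where $v(0)=0$ comes from $(\Id-\Pi(\omega))x(0)=0$.

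Finally, I define $\tilde x(n):=s(n)-v(n)$. Then $\tilde x(0)=s(0)-0=s(0)$, and a direct substitution
\[
\tilde x(n+1)=s(n+1)-v(n+1)=s(n+1)+C(\tilde\theta^n\omega)(s(n)-v(n))=C(\tilde\theta^n\omega)\tilde x(n)+s(n+1)
\]
yields~\eqref{adm5}. For~\eqref{Ne2}, since $\|\Pi(\tilde\theta^n\omega)\|\le \eta(\tilde\theta^n\omega)\zeta(\tilde\theta^n\omega)\le L^2$ by~\eqref{est-pro-Ps-2} and~\eqref{b}, we get $|v(n)|\le(1+L^2)|z(n)|\le (1+L^2)L^2\sup_n|s(n)|$, hence $\sup_n|\tilde x(n)|\le h_2\sup_n|s(n)|$ with $h_2:=1+L^2+L^4$. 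The main subtlety is identifying the correct ansatz $\tilde x(n)=s(n)-v(n)$ and realizing that putting the forcing \emph{only} at the backward return times (with no compensating term involving $\bar A$) keeps $f$ bounded while still producing a solution with the right boundary condition; once the ansatz is in hand, the verification is essentially algebraic.
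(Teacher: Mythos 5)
Your proof is correct and, in outline, follows the paper's: pick a forcing $f$ supported at the backward return times $-\tau_k'(\omega)$, invoke admissibility with the uniform bound from~\eqref{adm3}, project onto the unstable fibers via~\eqref{est-pro-Ps-2}, and reassemble a sequence $\tilde x$ solving~\eqref{adm5}. The one real difference is the choice of $f$: you place $f(-\tau_k'(\omega))=s(k)$, whereas the paper places $f(-\tau_k'(\omega)+1)=-A(\tilde\theta^k\omega)s(k)$ (and $f(1)=-A(\omega)s(0)$). Your choice forces $x(0)\in S(\omega)$, i.e. $v(0)=0$, whereas the paper's choice forces $x(0)-s(0)\in S(\omega)$, i.e. $(\Id-\Pi(\omega))x(0)=(\Id-\Pi(\omega))s(0)$ — the exact analogue of~\eqref{1816} in the proof of Lemma~\ref{LL}. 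Both lead to a valid recursion, but yours avoids the extra factor $A(\tilde\theta^k\omega)$ and hence never needs~\eqref{bnew} (only~\eqref{b}), yields the more transparent ansatz $\tilde x(n)=s(n)-v(n)$, and gives the slightly smaller constant $h_2=1+L^2+L^4$ versus the paper's $L^3(1+L^2)+L^2$. It is also pleasingly symmetric: your forcing for this lemma is literally the time-reversal of the forcing the paper itself uses in Lemma~\ref{LL}. None of this changes the substance — it is a cosmetic simplification of the same argument — but it is a clean one.

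Two minor points worth making explicit when you write this up. First, the block iteration uses $\Phi(m_n-m_{n+1},\tilde\theta^{n+1}\omega)=\bar A(\tilde\theta^{n+1}\omega)$, which relies on the identity $\tau_1(\tilde\theta^{n+1}\omega)=\tau_1'(\tilde\theta^n\omega)$; that identity holds since $\tilde\theta=\bar\theta^{-1}$, but it deserves a sentence. Second, the step from $v(n)=\bar A(\tilde\theta^{n+1}\omega)v(n+1)+(\Id-\Pi(\tilde\theta^n\omega))s(n)$ to $v(n+1)=C(\tilde\theta^n\omega)(v(n)-s(n))$ uses both the intertwining $(\Id-\Pi(\tilde\theta^n\omega))\bar A(\tilde\theta^{n+1}\omega)=\bar A(\tilde\theta^{n+1}\omega)(\Id-\Pi(\tilde\theta^{n+1}\omega))$ (a consequence of Lemma~\ref{lm-SU-subspace} and Lemma~\ref{inv2}) and the fact that $v(n)$ already lies in $U(\tilde\theta^n\omega)$, so $(\Id-\Pi(\tilde\theta^n\omega))v(n)=v(n)$; both should be recorded since $\bar A(\tilde\theta^{n+1}\omega)^{-1}$ is only defined on the unstable subspace.
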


\begin{proof}[{\bf Proof}]
The proof of Lemma \ref{lm14} is similar to the proof of of Lemma \ref{LL}.
Since $A(\omega)$ is not invertible on the entire $X$,
for the sake of completeness, we provide a short proof.
For any fixed $\omega \in F'$ and $(s(n))_{n\ge 0} \subset X$ with $\sup_{n\ge 0}|s(n)|<\infty$,
we define $f=(f(n))_{n\in \Z}\subset X$ as
\[
f(n)=\begin{cases}
-A(\omega) s(0) & \quad \text{for $n=1$;} \\
-A(\tilde \theta^k (\omega))s(k) & \quad \text{if $n=-\tau_k'(\omega)+1$ for some $k\ge 1$;}\\
0 & \quad \text{otherwise.}
\end{cases}
\]
By~\eqref{bnew}, we have that
\begin{equation}\label{11a}
\lVert f\rVert_{\ell_{1/\zeta(\omega)}^{\infty}(\mathbb{Z})}=\sup_{n\ge 0} \big  (\zeta(\theta (\tilde \theta^n (\omega)))| A(\tilde \theta^n (\omega))s(n)| \big)\le L^2 \sup_{n\ge 0}| s(n)|.
\end{equation}
Hence, $f\in \ell_{1/\zeta(\omega)}^{\infty}(\mathbb{Z})$. Thus, there exists a unique $x=(x(n))_{n\in \Z}\in \ell_{1}^{\infty}(\mathbb{Z})$ such that~\eqref{adm} holds. Thus,
\[
x(1)=A(\omega)(x(0)-s(0)),
\
x(n)=A(\theta^{n-1}(\omega))x(n-1), \quad \text{for $n\ge 2$.}
\]
It follows that $x(0)-s(0)\in S(\omega)$ and hence
\begin{equation}\label{ee}
(\Id-\Pi(\omega))x(0)=(\Id-\Pi(\omega))s(0).
\end{equation}
We now define $(z(n))_{n\le 0}\subset X$ by
\[
z(n)=\begin{cases}
x(0) & \text{for $n=0$;}\\
x(-\tau_{-n}'(\omega)) & \text{for $n<0$.}
\end{cases}
\]
Then, from~\eqref{adm} we obtain that
\begin{equation}\label{a1q}
z(n) =\bar{A}(\tilde \theta^{-n+1}(\omega))(z(n-1)-s(-n+1)), \quad \text{for $\omega \in F'$ and $n\le 0$.}
\end{equation}
Let
\[
z'(n):=(\Id-\Pi(\tilde \theta^{-n}(\omega)))z(n), \quad n\le 0.
\]
Then, \eqref{a1q} implies that
\begin{equation}\label{ab}
z'(n)=\bar{A}(\tilde \theta^{-n+1}(\omega))[z'(n-1)-(\Id-\Pi(\tilde \theta^{-n+1}(\omega)))s(-n+1)], \quad n\le 0.
\end{equation}
In addition, \eqref{ee} implies that
\begin{equation}\label{ab1}
z'(0)=(\Id-\Pi(\omega))s(0).
\end{equation}
Similarly to (\ref{3:29}), one can check that
\begin{equation}\label{uu}
\sup_{n\le 0}| z'(n)| \le L^3(1+L^2)\sup_{n\ge 0}| s(n)|.
\end{equation}
On the other hand, \eqref{ab} and~\eqref{ab1} imply that
\begin{equation*}\label{33}
z'(-n)=\sum_{k=0}^n \bar{\Phi}(n-k,\tilde \theta^n (\omega))^{-1}(\Id-\Pi(\tilde \theta^k (\omega)))s(k), \quad \text{for $n\in \Z_{+}$.}
\end{equation*}
Finally, let
\[
\tilde x(n):=z'(-n)+\Pi(\tilde \theta^n (\omega))s(n), \quad n\in \N.
\]
Similarly to (\ref{xB-tlx=s}), one can check that~\eqref{adm5} holds.
Moreover, by (\ref{est-pro-Ps-2}), \eqref{b} and~\eqref{uu}, we conclude that~\eqref{Ne2} holds with the constant
\[
h_{2}:= L^3(1+L^2)+L^2>0.
\]
The proof of the lemma is completed.
\end{proof}
The proof of the following lemma is analogous to the proof of Lemma~\ref{ww}.
\begin{lemma}\label{ww1}
There exist $Q', \nu'>0$ such that
\[
| \bar{\Phi}(n,\tilde \theta^n (\omega))^{-1}v| \le Q'e^{-\nu' n}| v|, \quad \text{for $\omega \in F'$, $v\in U(\omega)$ and $n\in \N$.}
\]
\end{lemma}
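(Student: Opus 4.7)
The plan is to mirror the two-step argument used in Lemma~\ref{ww}, replacing the role of Lemma~\ref{LL} (which controlled the stable side under the forward induced operator $B$) by Lemma~\ref{lm14} (which controls the unstable side under the backward induced operator $C$). Throughout I will use the crucial identity
\[
C(\omega)v=\bar A(\tilde\theta\omega)^{-1}v \quad \text{whenever } v\in U(\omega),
\]
which follows because $(\Id-\Pi(\omega))$ is the identity on $U(\omega)$ and $\bar A(\tilde\theta\omega)\rvert_{U(\tilde\theta\omega)}\colon U(\tilde\theta\omega)\to U(\omega)$ is an isomorphism (by Lemma~\ref{I} together with the cocycle structure and $\tilde\theta=\bar\theta^{-1}$). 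Consequently, if $\tilde x(n)\in U(\tilde\theta^n\omega)$ then $C(\tilde\theta^n\omega)\tilde x(n)\in U(\tilde\theta^{n+1}\omega)$, and an induction identifies
\[
\bar\Phi(n,\tilde\theta^n\omega)^{-1}v\in U(\tilde\theta^n\omega), \quad n\ge 0, \ v\in U(\omega).
\]

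\textbf{Step 1 (uniform boundedness).} Fix $\omega\in F'$ and $v\in U(\omega)$. Apply Lemma~\ref{lm14} with the source term $s(0)=v$ and $s(n)=0$ for $n\ge 1$. Then $\tilde x(0)=v$, and since $\tilde x(1)=C(\omega)v=\bar A(\tilde\theta\omega)^{-1}v\in U(\tilde\theta\omega)$, a straightforward induction using the identity above gives $\tilde x(n)=\bar\Phi(n,\tilde\theta^n\omega)^{-1}v$ for all $n\ge 0$. Plugging into~\eqref{Ne2} yields
\begin{equation}\label{stepone-ww1}
\bigl|\bar\Phi(n,\tilde\theta^n\omega)^{-1}v\bigr|\le h_{2}|v|, \quad n\in\N, \ \omega\in F', \ v\in U(\omega).
\end{equation}

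\textbf{Step 2 (contraction at a fixed iterate).} Choose $n_0\in\N$ such that $h_{2}^{2}/(n_0+1)\le 1/e$. For any $\omega\in F'$ and $v\in U(\omega)$, define
\[
\tilde s(n):=\begin{cases}\bar\Phi(n,\tilde\theta^n\omega)^{-1}v & 0\le n\le n_0,\\ 0 & n>n_0,\end{cases}
\]
and note that $\tilde s(n)\in U(\tilde\theta^n\omega)$, so by~\eqref{stepone-ww1} we have $\sup_{n\ge 0}|\tilde s(n)|\le h_{2}|v|$. Lemma~\ref{lm14} produces $(\tilde x(n))_{n\ge 0}$ with $\tilde x(0)=v$ satisfying the associated recursion. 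Using the identity $C(\tilde\theta^n\omega)\bar\Phi(n,\tilde\theta^n\omega)^{-1}v=\bar\Phi(n+1,\tilde\theta^{n+1}\omega)^{-1}v$ (valid because $\bar\Phi(n,\tilde\theta^n\omega)^{-1}v\in U(\tilde\theta^n\omega)$), a simple induction shows
\[
\tilde x(n_0)=(n_0+1)\,\bar\Phi(n_0,\tilde\theta^{n_0}\omega)^{-1}v.
\]
Combining with~\eqref{Ne2} and Step~1 gives $(n_0+1)|\bar\Phi(n_0,\tilde\theta^{n_0}\omega)^{-1}v|\le h_{2}^{2}|v|$, hence
\begin{equation}\label{steptwo-ww1}
\bigl|\bar\Phi(n_0,\tilde\theta^{n_0}\omega)^{-1}v\bigr|\le \tfrac{1}{e}|v|, \quad \omega\in F', \ v\in U(\omega).
\end{equation}

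\textbf{Step 3 (exponential decay).} Splitting any $n\in\N$ as $n=qn_0+r$ with $0\le r<n_0$ and iterating~\eqref{steptwo-ww1} $q$ times (using that each intermediate base point $\tilde\theta^{jn_0}\omega$ lies in $F'$), and absorbing the last $r$ steps via~\eqref{stepone-ww1}, produces the claimed estimate with $Q'=h_{2}e$ and $\nu'=1/n_0$.

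The main obstacle is Step~2: ensuring that the inhomogeneous term $\tilde s$ remains within the unstable fibers and that the closed-form expression $\tilde x(n_0)=(n_0+1)\bar\Phi(n_0,\tilde\theta^{n_0}\omega)^{-1}v$ actually holds; this is precisely where the identity $C(\omega)\rvert_{U(\omega)}=\bar A(\tilde\theta\omega)^{-1}\rvert_{U(\omega)}$ together with the invariance $\bar\Phi(n,\tilde\theta^n\omega)^{-1}U(\omega)\subset U(\tilde\theta^n\omega)$ is essential. Once these invariance points are clear, the remainder is a verbatim adaptation of the proof of Lemma~\ref{ww}.
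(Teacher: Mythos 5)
Your proposal is correct and follows exactly the route the paper intends: the paper simply says the proof is ``analogous to the proof of Lemma~\ref{ww},'' and you carry out that adaptation faithfully, using Lemma~\ref{lm14} in place of Lemma~\ref{LL} and obtaining $Q'=h_2 e$, $\nu'=1/n_0$. You also correctly flag the one genuinely non-automatic point of the adaptation, namely that $C(\omega)\rvert_{U(\omega)}=\bar A(\tilde\theta\omega)^{-1}\rvert_{U(\omega)}$ and that $\bar\Phi(n,\tilde\theta^n\omega)^{-1}$ maps $U(\omega)$ into $U(\tilde\theta^n\omega)$, which is what makes the closed forms $\tilde x(n)=\bar\Phi(n,\tilde\theta^n\omega)^{-1}v$ and $\tilde x(n_0)=(n_0+1)\bar\Phi(n_0,\tilde\theta^{n_0}\omega)^{-1}v$ valid.
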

It follows from Lemma~\ref{ww1} that
\[
\frac{1}{Q'}e^{\nu' n}| v| \le | \bar{\Phi}(n,\omega)v|, \quad \text{for $\omega \in F'$, $v\in U(\omega)$ and $n\in \N$.}
\]
Now, arguing as in the proof of Lemma~\ref{LE}, one can obtain the following lemma.

\begin{lemma}\label{LE2}
$
\lim_{n\to \infty}\frac 1 n \log | \Phi(n,\omega)v|>0
$
for $\mathbb P$-a.e. $\omega \in F'$ and every $v\in U(\omega)\setminus \{0\}$.
\end{lemma}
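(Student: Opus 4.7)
The plan is to mirror the proof of Lemma~\ref{LE}, with Lemma~\ref{ww1} playing the role that Lemma~\ref{ww} played there. The limit $\lim_{n\to\infty}\frac{1}{n}\log|\Phi(n,\omega)v|$ already exists for $\mathbb{P}$-a.e.\ $\omega$ and every nonzero $v$ by MET (applied to $\Phi$ under the integrability hypothesis~\eqref{intmet}), so the task is to show it is strictly positive when $v\in U(\omega)\setminus\{0\}$.

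First I would rewrite the limit in terms of the induced cocycle $\bar{\Phi}$. Since $\tau_n(\omega)\to\infty$ and $\Phi(\tau_n(\omega),\omega)=\bar{\Phi}(n,\omega)$, passing to this subsequence gives
\[
\lim_{n\to\infty}\frac{1}{n}\log|\Phi(n,\omega)v|=\lim_{n\to\infty}\frac{n}{\tau_n(\omega)}\cdot\frac{1}{n}\log|\bar{\Phi}(n,\omega)v|,
\]
and by the exact Birkhoff-plus-Kac computation that produced~\eqref{pp} in the proof of Lemma~\ref{LE}, the factor $n/\tau_n(\omega)$ converges to $\mathbb{P}(F)>0$ for $\mathbb{P}$-a.e.\ $\omega\in F'$.

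Next I would invoke the lower bound $|\bar{\Phi}(n,\omega)v|\ge \frac{1}{Q'}e^{\nu' n}|v|$ for $\omega\in F'$, $v\in U(\omega)$ and $n\in\mathbb{N}$, recorded just before the lemma as a consequence of Lemma~\ref{ww1}. Taking logarithms and dividing by $n$ yields $\frac{1}{n}\log|\bar{\Phi}(n,\omega)v|\ge \nu'+\frac{1}{n}\log(|v|/Q')$, so $\liminf_{n\to\infty}\frac{1}{n}\log|\bar{\Phi}(n,\omega)v|\ge \nu'$. Combined with the preceding display, this gives $\lim_{n\to\infty}\frac{1}{n}\log|\Phi(n,\omega)v|\ge \mathbb{P}(F)\cdot\nu'>0$, which is exactly the desired conclusion.

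I do not anticipate any genuinely new obstacle; the only minor bookkeeping is to verify that the MET full-measure set, the Birkhoff-Kac full-measure set, and $F'$ share a common full-measure subset of $F'$ so that all three asymptotics hold simultaneously, which is immediate as a countable intersection. The substantive exponential estimate has already been carried out in Lemma~\ref{ww1}; the content of the present lemma is simply the transfer of that estimate from the induced time parameter $n$ back to the original time parameter through the Kac mean return time $1/\mathbb{P}(F)$.
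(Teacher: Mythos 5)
Your proposal is correct and follows precisely the route the paper indicates (the paper states Lemma~\ref{LE2} with only the remark ``arguing as in the proof of Lemma~\ref{LE}''): you mirror Lemma~\ref{LE}, replacing the upper bound from Lemma~\ref{ww} by the lower bound $|\bar{\Phi}(n,\omega)v|\geq (1/Q')e^{\nu' n}|v|$ derived from Lemma~\ref{ww1}, and transfer back to the original time scale via the Kac/Birkhoff identity~\eqref{pp}. No gaps.
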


As a direct consequence of (\ref{X=S+U-dirsum}), Lemmas \ref{LE} and~\ref{LE2}, we conclude that for  $\mathbb P$-a.e. $\omega \in F'$
 and every $v\in X$, we have that
\begin{equation}\label{Le}
\lim_{n\to \infty}\frac 1 n \log | \Phi(n,\omega)v| \neq 0.
\end{equation}
We now claim that all Lyapunov exponents of $\Phi$ with respect to $\mathbb P$ are nonzero.
Assume the opposite, i.e., there exists $i\in \N$ such that $\lambda_i=0$. Let $\Omega'\subset \Omega$ be the measurable set on which all
conclusions of MET are true. Note that
$\mathbb P(\Omega')=1$. Let $F''\subset F'$ be a measurable set, $\mathbb P(F'')=\mathbb P(F')$ such that~\eqref{Le} holds for each $\omega \in F''$ and $v\in X$.
Since $\mathbb P(F'')>0$, we have that $F''\cap \Omega'\neq \emptyset$.
Since $\lambda_i=0$,
for any fixed $\omega_0\in F''\cap \Omega'$ and for every $v\in E_i(\omega_0)\setminus \{0\}$ we have
\[
\lim_{n\to \infty}\frac 1 n \log | \Phi(n,\omega_0)v|=0,
\]
which contradicts~\eqref{Le}.
The conclusion of the theorem now follows directly from Proposition ~\ref{TED}.
 \hfill$\square$


\section{Applications
}\label{T4}

In this section,
we use Theorem \ref{theorem-admi-texd}
to give the roughness of tempered exponential dichotomies for parametric random systems.
Moreover, we use the approach in the proof of Theorem \ref{theorem-admi-texd}
to obtain a corresponding result for the deterministic difference equation (\ref{lnde-dis}).

First, we give the roughness of tempered exponential dichotomies for parametric random systems
and formulate conditions for a H\"older continuous dependence of the associated projections on the parameter.

\begin{cor}\label{RTED}
Suppose that $(\Xi, |\cdot |)$ is a normed space, and
 $\Phi$ and $\Psi_{\xi}$ for  $\xi\in \Xi$ are measurable linear cocycles over $\theta$
  with the generators $A$ and $B_{\xi}$ respectively.
In addition, assume
\begin{itemize}
\item[(A1)]
 $\Phi$ has a tempered exponential dichotomy on $\widetilde \Omega \subset \Omega$
 with the exponent $\alpha \colon \Omega \to (0, +\infty)$ and
 bound $K\colon \Omega \to (0, +\infty)$;

\item[(A2)] for $\omega \in \widetilde{\Omega}$,
\begin{equation}\label{rob}
\|A(\omega)-B_{\xi}(\omega)\| \le \frac{\rho(\omega)}{K(\theta \omega)},
\ \forall \ \xi\in \Xi,
\end{equation}
where $\rho\colon \Omega \to (0, +\infty)$ is a $\theta$-invariant random variable such that
\begin{equation}\label{rho}
\rho(\omega)< \frac{1-e^{-\alpha (\omega)}}{1+e^{-\alpha (\omega)}}, \quad \omega \in \Omega;
\end{equation}

\item[(A3)] for $\omega \in \widetilde{\Omega}$,
\begin{equation}\label{Hold-cond}
\|B_{\xi_{1}}(\omega)-B_{\xi_{2}}(\omega)\| \le \frac{\upsilon(\omega)|\xi_{1}-\xi_{2}|^{\sigma}}{K(\theta \omega)},
\ \forall \ \xi_{1},\xi_{2}\in \Xi,
\end{equation}
where $\upsilon\colon \Omega \to (0, +\infty)$ is a $\theta$-invariant random variable
and $\sigma \in (0,1]$ is a constant.
\end{itemize}
Then, $\Psi_{\xi}$ has a tempered exponential dichotomy on $\widetilde \Omega$ for any $\xi\in \Xi$
with the same exponent $\widetilde{\alpha} \colon \Omega \to (0, +\infty)$ and
 bound $\widetilde{K}\colon \Omega \to (0, +\infty)$. Moreover,
 the corresponding projections $\widetilde{\Pi}_{\xi}(\omega)$ are H\"older continuous in $\xi$
with the H\"older exponent $\sigma$.
\end{cor}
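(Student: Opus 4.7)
The strategy is to verify the three admissibility hypotheses of Theorem \ref{theorem-admi-texd} for each perturbed cocycle $\Psi_\xi$ via a Banach fixed point argument, and then to extract the H\"older dependence of the projections from a resolvent-type identity for the inverse operators introduced in that proof.

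The first step is to apply Theorem \ref{theorem-texd-admi} to $\Phi$. For any $\theta$-invariant random variable $\beta\colon \Omega \to (0,+\infty)$ with $\beta(\omega) < \alpha(\omega)$, this yields measurably proper admissibility of the three pairs for $\Phi$ together with the explicit admissibility constants $\gamma_{\pm \beta}(\omega)$ and $\widetilde{\gamma}_\beta(\omega)$ computed in the proof of Theorem \ref{theorem-texd-admi}. All three constants tend to $(1+e^{-\alpha(\omega)})/(1-e^{-\alpha(\omega)})$ as $\beta\to 0$, whose reciprocal is precisely the bound imposed on $\rho(\omega)$ in~(\ref{rho}). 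Since $\rho$ and $\alpha$ are $\theta$-invariant, one can therefore pick a $\theta$-invariant measurable $\beta(\omega) \in (0,\alpha(\omega))$ satisfying
\[
\max\{\gamma_{\beta}(\omega),\gamma_{-\beta}(\omega),\widetilde\gamma_\beta(\omega)\}\,\rho(\omega) < 1 \quad \text{on } \widetilde\Omega.
\]

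The core step is to verify, for each of the three pairs, the measurably proper admissibility for $\Psi_\xi$. Given $\omega \in \widetilde\Omega$ and $f$ in the input class, rewrite $x(n+1)=B_\xi(\theta^n\omega)x(n)+f(n+1)$ as the $\Phi$-equation $x(n+1)=A(\theta^n\omega)x(n)+g_x(n+1)$ with $g_x(n+1):=f(n+1)+(B_\xi-A)(\theta^n\omega)x(n)$. By (\ref{rob}) and the $\theta$-invariance of $\rho$, if $y$ lies in the output class then the perturbation $(B_\xi-A)(\theta^{\cdot-1}\omega)y(\cdot-1)$ lies in the input class with norm bounded by $\rho(\omega)e^{-\beta(\omega)}\|y\|$. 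Hence the operator
\[
T_\xi(y) := (L^{\Phi}_{\lambda,\omega})^{-1}\bigl(f + (B_\xi-A)(\theta^{\cdot-1}\omega)y(\cdot-1)\bigr)
\]
is a contraction on the output class with Lipschitz factor at most $\gamma_\beta(\omega)\rho(\omega)e^{-\beta(\omega)}<1$ (and analogously for the two-sided pair). Its unique fixed point supplies the output-class solution of the $\Psi_\xi$-equation with a $\xi$-uniform admissibility constant of order $\gamma_\beta/(1-\gamma_\beta\rho)$, and uniqueness for the homogeneous equation follows from the same estimate applied to a putative nontrivial solution. Measurability in $\omega$ is inherited from the iterates $T_\xi^k(0)$, each of which is measurable by the measurably proper admissibility of $\Phi$. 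Theorem \ref{theorem-admi-texd} then produces a tempered exponential dichotomy for $\Psi_\xi$ with projection $\widetilde\Pi_\xi(\omega)$, exponent $\widetilde\alpha(\omega):=\beta(\omega)$, and bound $\widetilde K(\omega)=C(\omega)K(\omega)$ tempered and independent of $\xi$.

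For the H\"older estimate, recall from the proof of Theorem \ref{theorem-admi-texd} that $\widetilde\Pi_\xi(\omega)=G^\xi_{-\beta,\omega}(0,0)$, so $\widetilde\Pi_\xi(\omega)v=((L^{\xi}_{-\beta,\omega})^{-1}\tilde f_{v,0})(0)$. I use the resolvent identity
\[
(L^{\xi_1}_{-\beta,\omega})^{-1} - (L^{\xi_2}_{-\beta,\omega})^{-1} = (L^{\xi_1}_{-\beta,\omega})^{-1}\bigl(L^{\xi_2}_{-\beta,\omega} - L^{\xi_1}_{-\beta,\omega}\bigr)(L^{\xi_2}_{-\beta,\omega})^{-1}.
\]
The difference $L^{\xi_2}_{-\beta,\omega}-L^{\xi_1}_{-\beta,\omega}$ acts as $y(n)\mapsto (B_{\xi_1}-B_{\xi_2})(\theta^{n-1}\omega)y(n-1)$; by~(\ref{Hold-cond}) its operator norm from the output class to the input class is at most $e^{\beta(\omega)}\upsilon(\omega)|\xi_1-\xi_2|^\sigma$. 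Combined with the $\xi$-uniform bound on $\|(L^{\xi}_{-\beta,\omega})^{-1}\|$ from the previous step, applying this identity to $\tilde f_{v,0}$ and evaluating at $n=0$ yields $|\widetilde\Pi_{\xi_1}(\omega)v-\widetilde\Pi_{\xi_2}(\omega)v|\le C(\omega)|\xi_1-\xi_2|^\sigma|v|$ with $C$ depending on $\omega$ but not on $\xi_j$. The main technical obstacle lies in the very first step: picking a $\theta$-invariant measurable $\beta$ so that the contraction factor is pointwise less than one on $\widetilde\Omega$; this rests on the sharpness of (\ref{rho}) at the $\beta\to 0$ limit of the admissibility constants of Theorem \ref{theorem-texd-admi}.
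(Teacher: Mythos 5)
Your proposal is correct and follows essentially the same route as the paper's proof: fix a $\theta$-invariant $\beta\in(0,\alpha)$ small enough that the perturbation operator on each of the three weighted sequence spaces is a contraction (the condition that makes this possible is exactly the sharpness of \eqref{rho} as $\beta\to 0$, which you identify), verify the three measurably proper admissibility conditions of Theorem~\ref{theorem-admi-texd} for each $\Psi_\xi$ by a Banach fixed-point argument with a $\xi$-uniform bound, and then apply that theorem; the H\"older estimate is obtained from a perturbation identity for the projections. The one presentational difference worth noting is that you phrase the perturbation identity as the abstract resolvent identity $(L^{\xi_1})^{-1}-(L^{\xi_2})^{-1}=(L^{\xi_1})^{-1}(L^{\xi_2}-L^{\xi_1})(L^{\xi_2})^{-1}$ applied to $\tilde f_{v,0}$, whereas the paper casts the same identity concretely in terms of Green functions (Lemma~\ref{lm-Green=Green}); these are equivalent, and your formulation is arguably cleaner. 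Two small remarks: the contraction factor should read $\gamma_\lambda(\omega)\rho(\omega)e^{|\beta(\omega)|}$ (not $e^{-\beta(\omega)}$) to cover the case $\lambda=-\beta$, though this does not affect the argument since you correctly say $\beta$ must be chosen so the factor is below one; and the paper goes a step further than you do by invoking a discrete Gronwall inequality to obtain sharper explicit formulas for $\widetilde\alpha$ and $\widetilde K$, while your choice $\widetilde\alpha=\beta$ and $\widetilde K=\rho_\beta K$ (from Lemma~\ref{EST}) is weaker but fully sufficient for the statement as written, since both are $\xi$-independent and $\widetilde K$ is tempered.
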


\begin{proof}
 In order to check the conditions in Theorem \ref{theorem-admi-texd},
we need the following lemma, the proof of which
is omitted since it is analogous to the proof of~\cite[Lemma 1. p.4029]{ZhouLuZhang13JDE}.

\begin{lemma}
Suppose that $\Phi$ is a measurable  linear cocycle
over $\theta$ with the generator $A$
and has a tempered exponential dichotomy on $\widetilde{\Omega}$
with the exponent $\alpha \colon \Omega \to (0, +\infty)$ and bound $K\colon \Omega \to (0, +\infty)$.
 Let $C \colon \Omega \to \mathcal {L}(X)$ be strongly measurable such that  $\|C(\omega)\|\leq \delta(\omega) K(\theta \omega)^{-1}$
 for all $\omega \in  \widetilde{\Omega}$,
where $\delta: \Omega\rightarrow (0,+\infty)$ is a $\theta$-invariant random variable.
  Then
  for any $\theta$-invariant random variable $\beta \colon \Omega \to \R$ such that $\beta (\omega)\in (-\alpha (\omega), \alpha (\omega))$ for $\omega \in \Omega$
  and
  for each $\omega\in \widetilde{\Omega}$ and
  $f\in \ell^\infty_{1/K(\omega), \beta(\omega)}(J)$ (resp. $f\in \ell^\infty_{|1/K(\omega), \beta(\omega)|}(J)$),
 where $J=\mathbb{Z}_{+}$, $\mathbb{Z}_{-}$ or $\mathbb{Z}$,
  the equation
$$
x(n+1)=(A(\theta^{n}\omega)+C(\theta^{n}\omega))x(n)+f(n+1)
$$
  has a solution in
  $\ell^\infty_{1, \beta(\omega)}(J)$ (resp. $\ell^\infty_{|1, \beta(\omega)|}(J)$)
  if and only if there exists
  $x(\cdot,\omega)\in \ell^\infty_{1, \beta(\omega)}(J)$
  (resp. $x(\cdot,\omega)\in \ell^\infty_{|1, \beta(\omega)|}(J)$)
  satisfying one of the following equations
\begin{align}
\label{wt-sol-formula-Z+}
\begin{split}
x(n,\omega) &=\Phi(n,\omega)\Pi^{s}(\omega)\varsigma
\\
   & \ \      + \sum_{k=1}^{+\infty} G(n-k,\theta^{k}\omega)\bigg{\{}C(\theta^{k-1}\omega)x(k-1,\omega)+f(k)\bigg{\}},
    \ n\in \mathbb{Z}_{+},
\end{split}
\end{align}
for some $\varsigma\in X$ in the case $J=\mathbb{Z}_{+}$,
\begin{align}
\label{wt-sol-formula-Z-}
\begin{split}
x(n,\omega)&=\Phi(n,\omega)\Pi^{u}(\omega)\varsigma
\\
& \ \ \
 +\sum_{k=-\infty}^{0}G(n-k,\theta^{k}\omega)\bigg{\{}C(\theta^{k-1}\omega)x(k-1,\omega)+f(k)\bigg{\}},
 \ n\in \mathbb{Z}_{-},
\end{split}
\end{align}
for some $\varsigma \in X$ in the case $J=\mathbb{Z}_{-}$, and
\begin{eqnarray}
 x(n,\omega)
 = \sum_{k=-\infty}^{+\infty} G(n-k,\theta^{k}\omega)\bigg{\{}C(\theta^{k-1}\omega)x(k-1,\omega)+f(k)\bigg{\}},
 \  n\in \mathbb{Z},
\label{wt-sol-formula}
\end{eqnarray}
in the case $J=\mathbb{Z}$, where
$G(n,\omega)$ is the Green function corresponding to the tempered exponential dichotomy for $\Phi$ (as seen in \eqref{GGG-Temp}).
 \label{lm-wt-sol}
\end{lemma}

In order to apply Lemma \ref{lm-wt-sol},
we claim that there exists
 a positive $\theta$-invariant random variable $\beta$ such that
$\beta(\omega)\in (0,\alpha(\omega))$ for all $\omega\in \Omega$ and
\begin{equation}\label{6:07}
 \rho(\omega)e^{\beta(\omega)}
  \bigg{\{}\frac{1}{1-e^{-(\alpha(\omega)-\beta(\omega))}}
 +\frac{e^{-(\alpha(\omega)-\beta(\omega))}}{1-e^{-(\alpha(\omega)-\beta(\omega))}} \bigg{\}} <1.
\end{equation}
In fact, from the equality
\begin{eqnarray}
\label{contraction-constant}
 \rho(\omega)e^{\beta(\omega)}
  \bigg{\{}\frac{1}{1-e^{-(\alpha(\omega)-\beta(\omega))}}
 +\frac{e^{-(\alpha(\omega)-\beta(\omega))}}{1-e^{-(\alpha(\omega)-\beta(\omega))}} \bigg{\}}
  \!=\!\frac{1+\rho(\omega)
  \frac{1+e^{-\alpha(\omega)}}{1-e^{-\alpha(\omega)}}}{2},
\end{eqnarray}
one can find
\begin{eqnarray}
\label{est-beta-1}
 \beta(\omega)=\ln \bigg{\{}\frac{-\{2\rho(\omega)+
  e^{-\alpha(\omega)}\{1+\rho(\omega)
  \frac{1+e^{-\alpha(\omega)}}{1-e^{-\alpha(\omega)}}\}\}
  +\sqrt{\Delta(\omega)}}{4\rho(\omega)e^{-\alpha(\omega)}}\bigg{\}},
\end{eqnarray}
where
\begin{eqnarray*}
 &&\Delta(\omega):=\bigg{\{} 2\rho(\omega)+
  e^{-\alpha(\omega)} \bigg{\{}1+\rho(\omega)
  \frac{1+e^{-\alpha(\omega)}}{1-e^{-\alpha(\omega)}} \bigg{\}} \bigg{\}}^{2}
  \\
 &&\ \ \ \ \ \ \ \ \ \ \
  +8\rho(\omega)e^{-\alpha(\omega)}
\bigg{\{}1+\rho(\omega)
  \frac{1+e^{-\alpha(\omega)}}{1-e^{-\alpha(\omega)}} \bigg{\}}.
\end{eqnarray*}
Note that the function $\beta$ defined in (\ref{est-beta-1})
is a positive $\theta$-invariant random variable
because both $\alpha$ and $\rho$ are positive $\theta$-invariant random variables by the assumptions.
Moreover, by (\ref{rho}) and (\ref{contraction-constant}) one can check that
the random variable $\beta$ defined in (\ref{est-beta-1}) satisfies~\eqref{6:07}.
This proves the claim.

Let $\beta$ be a fixed positive $\theta$-invariant random variable satisfying (\ref{6:07})
and let $\lambda=\pm \beta$.
In order to prove the proper admissibility of the pair
$(\ell_{1/K,\lambda}^{\infty}(\widetilde \Omega \times \mathbb{Z}),\ell_{1,\lambda}^{\infty}(\widetilde \Omega \times \mathbb{Z}))$ for $\Psi_{\xi}$,
 by Lemma \ref{lm-wt-sol},
 it is sufficient to prove that for each
 $f\in \ell_{1/K(\omega),\lambda(\omega)}^{\infty}(\mathbb{Z})$ with $\omega\in \widetilde{\Omega}$
 there exists a unique sequence
 $x(\cdot,\omega)\in \ell_{1,\lambda(\omega)}^{\infty}(\mathbb{Z})$
 such that (\ref{wt-sol-formula}) holds
 with
 $C(\omega):=B_{\xi}(\omega)-A(\omega)$ depending on $\xi\in \Xi$ for each $\omega \in \Omega$.
 For convenience, we write $C$ as $C_\xi$ to indicate the dependence on $\xi$.

 For a given
  $f\in \ell_{1/K(\omega),\lambda(\omega)}^{\infty}(\mathbb{Z})$,
  consider a map
  $\Upsilon_{1,\lambda(\omega)}^f : \ell_{1,\lambda(\omega)}^{\infty}(\mathbb{Z})\rightarrow \ell_{1,\lambda(\omega)}^{\infty}(\mathbb{Z})$
  defined by
\begin{equation*}
(\Upsilon_{1,\lambda(\omega)}^{f} x)(n)
 :=
 \sum_{k=-\infty}^{+\infty} G(n-k,\theta^{k}\omega)\bigg{\{}C_{\xi}(\theta^{k-1}\omega)x(k-1)+f(k)\bigg{\}},
    \  n\in \mathbb{Z}.
\end{equation*}
Note from \eqref{rob} that
$(C_{\xi}(\theta^{n-1}\omega)x(n))_{n\in \mathbb{Z}}\in \ell_{1/K(\omega),\lambda(\omega)}^{\infty}(\mathbb{Z})$
for $x\in \ell_{1,\lambda(\omega)}^{\infty}(\mathbb{Z})$.
Hence, similarly to (\ref{de}),
by (\ref{green}) one can check that $\Upsilon_{1,\lambda(\omega)}^{f} x\in \ell_{1,\lambda(\omega)}^{\infty}(\mathbb{Z})$.
This implies that
 $\Upsilon_{1,\lambda(\omega)}^{f}(\ell_{1,\lambda(\omega)}^{\infty}(\mathbb{Z}))\subset \ell_{1,\lambda(\omega)}^{\infty}(\mathbb{Z})$.
 Moreover, for $x_{i}\in \ell_{1,\lambda(\omega)}^{\infty}(\mathbb{Z})$, $i=1,2$,
 by (\ref{green}) we get
\begin{eqnarray*}
 &&|(\Upsilon_{1,\lambda(\omega)}^{f} x_{1})(n)- (\Upsilon_{1,\lambda(\omega)}^{f} x_{2})(n)| \\
 &\leq&
 \sum_{k=-\infty}^{+\infty}K(\theta^{k}\omega)e^{-\alpha(\omega)|n-k|}
  \rho(\omega)K(\theta^{k}\omega)^{-1}
   |x_{1}(k-1)-x_{2}(k-1)|
 \\
  &\leq&
 \rho(\omega){\bigg\{}\sum_{k=-\infty}^{+\infty}e^{-\alpha(\omega)|n-k|}
   e^{\lambda(\omega)(k-1)}
  {\bigg\}}
  \|x_{1}-x_{2}\|_{\ell_{1,\lambda(\omega)}^{\infty}(\mathbb{Z})}
    \\
  &=&
  \rho(\omega)e^{\lambda(\omega)(n-1)}\widetilde{\gamma}_{\lambda}(\omega)
  \|x_{1}-x_{2}\|_{\ell_{1,\lambda(\omega)}^{\infty}(\mathbb{Z})},
\end{eqnarray*}
where $\widetilde{\gamma}_{\lambda}(\omega):=\frac{1}{1-e^{-(\alpha(\omega)+\lambda(\omega))}}
  +
  \frac{e^{-(\alpha(\omega)-\lambda(\omega))}}{1-e^{-(\alpha(\omega)-\lambda(\omega))}}$,
which implies that
\begin{eqnarray*}
 \|\Upsilon_{1,\lambda(\omega)}^{f} x_{1}-\Upsilon_{1,\lambda(\omega)}^{f} x_{2}\|_{\ell_{1,\lambda(\omega)}^{\infty}(\mathbb{Z})}
 \leq
  \rho(\omega)e^{-\lambda(\omega)}
  \widetilde{\gamma}_{\lambda}(\omega)
 \|x_{1}-x_{2}\|_{\ell_{1,\lambda(\omega)}^{\infty}(\mathbb{Z})}.
\end{eqnarray*}
By (\ref{6:07}),
\begin{eqnarray}
\label{est-gamma-1}
  &&\rho(\omega)e^{-\lambda(\omega)}
 \widetilde{\gamma}_{\lambda}(\omega)
 \\
 &\leq& \rho(\omega)e^{\beta(\omega)}
  \bigg{\{}\frac{1}{1-e^{-(\alpha(\omega)-\beta(\omega))}}
 +\frac{e^{-(\alpha(\omega)-\beta(\omega))}}{1-e^{-(\alpha(\omega)-\beta(\omega))}} \bigg{\}}<1.
 \nonumber
\end{eqnarray}
It follows
that $\Upsilon_{1,\lambda(\omega)}^{f}$
is a contraction on  the Banach space $\ell_{1,\lambda(\omega)}^{\infty}(\mathbb{Z})$.
This together with Lemma \ref{lm-wt-sol} implies that the pair
$(\ell_{1/K,\lambda}^{\infty}(\widetilde \Omega \times \mathbb{Z}),\ell_{1,\lambda}^{\infty}(\widetilde \Omega \times \mathbb{Z}))$
is properly admissible for $\Psi_{\xi}$.
 Moreover,
 for any fixed measurable map $f\colon \widetilde \Omega \times \Z \to X$ such that $(f(\omega, n))_{n\in \Z}\in \ell_{1/K(\omega),\beta(\omega)}^{\infty}(\mathbb{Z})$ for $\omega \in \widetilde \Omega$,
 let $x$ denote the solution of (\ref{madm}) with the property that
 $x(\omega,\cdot)$ is the fixed point of $\Upsilon_{1,\lambda(\omega)}^{f(\omega,\cdot)}$
 for all $\omega\in \widetilde{\Omega}$.
Let $x_{0}\equiv 0$ and $x_{i}(\omega,n):=(\Upsilon_{1,\lambda(\omega)}^{f(\omega,\cdot)}x_{i-1}(\omega,\cdot))(n)$
for $i\ge 1$.
Then
$x(\cdot,n)$ is the pointwise limit of the sequence of maps $(x_{i}(\cdot,n))_{i\in \mathbb{Z}_{+}}$ for any fixed $n\in \mathbb{Z}$,
because $\Upsilon_{1,\lambda(\omega)}^{f(\omega,\cdot)}$ is a contraction for any fixed $\omega\in \widetilde \Omega$. In addition,
it is clear that $\omega\mapsto x_{0}(\omega,n)$ is measurable for any fixed $n\in \mathbb{Z}$. Assume now that
for $i\ge 1$,  $\omega\mapsto x_{i-1}(\omega,n)$ is measurable for any fixed $n\in \mathbb{Z}$. Since
 \begin{equation*}
x_{i}(\omega,n)
 =
 \sum_{k=-\infty}^{+\infty} G(n-k,\theta^{k}\omega)\bigg{\{}C_{\xi}(\theta^{k-1}\omega)x_{i-1}(\omega,k-1)+f(\omega,k)\bigg{\}},
      n\in \mathbb{Z},
\end{equation*}
it follows from the measurability of
$\omega\mapsto f(\omega, n)$  for any fixed $n\in \mathbb{Z}$,
and the strong measurability of maps
$\omega\mapsto C_{\xi}(\theta^{k}\omega)$, $\omega\mapsto \Phi(n,\omega)$,
 $\omega\mapsto \Pi^{s}(\omega)$ and $\omega\mapsto \Phi(n,\omega)\Pi^{u}(\omega)$
 that $\omega\mapsto x_{i}(\omega,n)$ is measurable for any fixed $n\in \mathbb{Z}$.
By the induction,
we conclude that for $i\ge 0$,
$\omega\mapsto x_{i}(\omega,n)$ is measurable for any fixed $n\in \mathbb{Z}$.
This implies that  $\omega\mapsto x(\omega,n)$ is measurable for any fixed $n\in \mathbb{Z}$.
Hence,
the pair $(\ell_{1/K,\lambda}^{\infty}(\widetilde{\Omega}\times \mathbb{Z}),\ell_{1,\lambda}^{\infty}(\widetilde{\Omega}\times \mathbb{Z}))$ is
measurably properly admissible.
Moreover,
\begin{eqnarray*}
 |x(n)|
\!\!\! &\leq&\!\!\!
 \sum_{k=-\infty}^{+\infty} \|G(n-k,\theta^{k}\omega)\|\bigg{\{}\|C_{\xi}(\theta^{k-1}\omega)\|~|x(k-1)|+|f(k)|\bigg{\}}
 \\
 \!\!\! &\leq&\!\!\!
 \sum_{k=-\infty}^{+\infty}K(\theta^{k}\omega)e^{-\alpha(\omega)|n-k|}
 \|\bigg{\{}\frac{\rho(\omega)}{K(\theta^{k}\omega)}|x(k-1)|+|f(k)|\bigg{\}}
  \\
 \!\!\! &\leq&\!\!\!
 \rho(\omega){\bigg\{}\sum_{k=-\infty}^{+\infty}e^{-\alpha(\omega)|n-k|}
   e^{\lambda(\omega)(k-1)}
  {\bigg\}}
  \|x\|_{\ell_{1,\lambda(\omega)}^{\infty}(\mathbb{Z})}
 \\
 &&
 +{\bigg\{}\sum_{k=-\infty}^{+\infty}e^{-\alpha(\omega)|n-k|}
   e^{\lambda(\omega)k}
  {\bigg\}}
 \|f\|_{\ell_{1/K(\omega),\lambda(\omega)}^{\infty}(\mathbb{Z})}
 \\
\!\!\! &=&\!\!\!
 \rho(\omega)e^{\lambda(\omega)(n-1)}
 \widetilde{\gamma}_{\lambda}(\omega)
   \|x\|_{\ell_{1,\lambda(\omega)}^{\infty}(\mathbb{Z})}
  +e^{\lambda(\omega)n}\widetilde{\gamma}_{\lambda}(\omega)
  \|f\|_{\ell_{1/K(\omega),\lambda(\omega)}^{\infty}(\mathbb{Z})}.
\end{eqnarray*}
This together with (\ref{est-gamma-1}) implies that
\begin{eqnarray*}
  \|x\|_{\ell_{1,\lambda(\omega)}^{\infty}(\mathbb{Z})}
  \leq
  \frac{\widetilde{\gamma}_{\lambda}(\omega)}{1-\rho(\omega)e^{-\lambda(\omega)}\widetilde{\gamma}_{\lambda}(\omega)}
  \|f\|_{\ell_{1/K(\omega),\lambda(\omega)}^{\infty}(\mathbb{Z})}.
\end{eqnarray*}
It is clear that the positive random variable $\omega\mapsto \frac{\widetilde{\gamma}_{\lambda}(\omega)}{1-\rho(\omega)e^{-\lambda(\omega)}\widetilde{\gamma}_{\lambda}(\omega)}$
is $\theta$-invariant.

In order to prove the proper admissibility of the pair
$(\ell_{|1/K,\beta|}^{\infty}(\widetilde \Omega \times \mathbb{Z}),\ell_{|1,\beta|}^{\infty}(\widetilde \Omega \times \mathbb{Z}))$,
 for a given
  $f\in \ell_{1/K(\omega),\beta(\omega)}^{\infty}(\mathbb{Z})$ with $\omega\in \widetilde{\Omega}$,
 we consider a map
  $\Upsilon_{|1,\beta(\omega)|,f}: \ell_{|1,\beta(\omega)|}^{\infty}(\mathbb{Z})\rightarrow \ell_{|1,\beta(\omega)|}^{\infty}(\mathbb{Z})$
  defined by
\begin{equation*}
(\Upsilon_{|1,\beta(\omega)|}^{f} x)(n)
 :=
 \sum_{k=-\infty}^{+\infty} G(n-k,\theta^{k}\omega)\bigg{\{}C_{\xi}(\theta^{k-1}\omega)x(k-1)+f(k)\bigg{\}},
    \  n\in \mathbb{Z}.
\end{equation*}
Similarly to the above estimates for the map $\Upsilon_{1,\lambda(\omega)}^{f}$, we can obtain
that
 $\Upsilon_{|1,\beta(\omega)|}^{f}(\ell_{|1,\beta(\omega)|}^{\infty}(\mathbb{Z}))\subset \ell_{|1,\beta(\omega)|}^{\infty}(\mathbb{Z})$,
 and for $x_{i}\in \ell_{|1,\beta(\omega)|}^{\infty}(\mathbb{Z})$ and $i=1,2$,
\begin{eqnarray*}
 &&\|\Upsilon_{|1,\beta(\omega)|}^{f} x_{1}-\Upsilon_{|1,\beta(\omega)|}^{f} x_{2}\|_{\ell_{|1,\beta(\omega)|}^{\infty}(\mathbb{Z})}
 \\
 &\leq&
  \rho(\omega)e^{\beta(\omega)}
  \bigg{\{}\frac{1}{1-e^{-(\alpha(\omega)-\beta(\omega))}}
 +\frac{e^{-(\alpha(\omega)-\beta(\omega))}}{1-e^{-(\alpha(\omega)-\beta(\omega))}} \bigg{\}}
  \|x_{1}-x_{2}\|_{\ell_{|1,\beta(\omega)|}^{\infty}(\mathbb{Z})}.
\end{eqnarray*}
It follows from (\ref{6:07})
that $\Upsilon_{|1,\beta(\omega)|}^{f}$
is a contraction on the Banach space $\ell_{|1,\beta(\omega)|}^{\infty}(\mathbb{Z})$.
This together with Lemma \ref{lm-wt-sol} implies that the pair
$(\ell_{|1/K,\beta|}^{\infty}(\widetilde \Omega \times \mathbb{Z}),\ell_{|1,\beta|}^{\infty}(\widetilde \Omega \times \mathbb{Z}))$
is properly admissible.
Proceeding as above for the pair $(\ell_{1/K,\lambda}^{\infty}(\widetilde{\Omega}\times \mathbb{Z}),\ell_{1,\lambda}^{\infty}(\widetilde{\Omega}\times \mathbb{Z}))$, one can establish that
$(\ell_{|1/K,\beta|}^{\infty}(\widetilde \Omega \times \mathbb{Z}),\ell_{|1,\beta|}^{\infty}(\widetilde \Omega \times \mathbb{Z}))$ is measurably properly admissible.

Up to now,
we have checked all conditions of Theorem \ref{theorem-admi-texd} for $\Psi_{\xi}$ for any fixed $\xi\in \Xi$.
Hence, $\Psi_{\xi}$ has a tempered exponential dichotomy on $\widetilde{\Omega}$ for every $\xi\in \Xi$.

Furthermore,
we estimate the exponent and the bound of the tempered exponential dichotomy for $\Psi_{\xi}$
and prove the H\"older continuity of  the associated projections in $\xi$.
We need the following lemma.

\begin{lemma}
Suppose that $\Phi_{i}$  for $i=1,2$ is a measurable  linear cocycle
over $\theta$ with the generator $A_{i}$
 and has a tempered exponential dichotomy on $\widetilde{\Omega}$
with the exponent $\alpha_{i} \colon \Omega \to (0, +\infty)$ and bound $\kappa_{i}K\colon \Omega \to (0, +\infty)$,
where $\kappa_{i}\colon \Omega \to (0, +\infty) $ is $\theta$-invariant random variable
and $K$ is a tempered (with respect to $\theta$) random variable.
If  $\|A_{1}(\omega)-A_{2}(\omega)\|\leq \delta(\omega) K(\theta \omega)^{-1}$  for $\omega \in  \widetilde{\Omega}$,
  where $\delta: \Omega\rightarrow (0,+\infty)$ is a $\theta$-invariant random variable,
then for $\omega \in  \widetilde{\Omega}$,
\begin{equation}\label{wt-Green-Green-formula-lm}
 G_{2}(n,\omega)
 \!=\! G_{1}(n,\omega)\!+\!\!\sum_{k=-\infty}^{+\infty} G_{1}(n\!-\!k,\theta^{k}\omega)C_{2,1}(\theta^{k-1}\omega)G_{2}(k\!-\!1,\omega),
\end{equation}
for $n\in \mathbb{Z}$, where $G_{i}(n,\omega)$ for $i=1,2$ is the Green function corresponding to the tempered exponential dichotomy for $\Phi_{i}$
and $C_{2,1}(\omega):=A_{2}(\omega)-A_{1}(\omega)$.
 \label{lm-Green=Green}
\end{lemma}

Note that
$\{G_{2}(m,\omega)\}_{m\in \mathbb{Z}_{+}}$
and $\{\Id-G_{2}(0,\omega),G_{2}(m,\omega) {~\rm~for}~m\leq -1 \}_{m\in \mathbb{Z}_{-}}$
are bounded solutions of the equation
$x(n+1)=(\Phi_{1}(1,\theta^{n}\omega)+C_{2,1}(\theta^{n}\omega))x(n)$
on $\mathbb{Z}_{+}$ and $\mathbb{Z}_{-}$ respectively for $\omega \in  \widetilde{\Omega}$.
Then by Lemma \ref{lm-wt-sol}, the proof of (\ref{wt-Green-Green-formula-lm})
is similar to the proof of \cite[(3.40) and (3.41), p.4042]{ZhouLuZhang13JDE}.
Hence, we omit the proof of Lemma \ref{lm-Green=Green}.

By Lemma \ref{EST},
the bound of the tempered exponential dichotomy for $\Psi_{\xi}$
is of the form $\kappa_{\xi}K: \Omega\rightarrow (0,\infty)$, where $\kappa_{\xi}$ is a $\theta$-invariant random variable.
Moreover, by Lemma \ref{lm-Green=Green}  we have for $\omega \in  \widetilde{\Omega}$,
\begin{eqnarray}
 G_{\xi}(n,\omega)
 \!=\! G(n,\omega)\!+\!\!\sum_{k=-\infty}^{+\infty} G(n\!-\!k,\theta^{k}\omega)C_{\xi}(\theta^{k-1}\omega)G_{\xi}(k\!-\!1,\omega)
\label{wt-Green-Green-formula}
\end{eqnarray}
for  $n\in \mathbb{Z}$, where $G_{\xi}(n,\omega)$ denotes the Green function corresponding to the tempered exponential dichotomy for $\Psi_{\xi}$.
%
Using the discrete version of the projected Gronwall inequality
(see \cite[Corollary 2.3, p.107]{Daleckij-Krein-book-1974},
\cite[Lemma 8, p.3589]{Barreira-Silva-Valls-2009-JDE}
and
\cite[Lemma 17, p.3595]{Barreira-Silva-Valls-2009-JDE}),
one can check that the exponent $\widetilde{\alpha}:\Omega \to (0,+\infty)$ and the bound $\widetilde{K}:\Omega \to (0,+\infty)$
of the tempered exponential dichotomy for $\Psi_{\xi}$ are given by
\begin{eqnarray*}
\widetilde{\alpha}(\omega):=-\ln
(\cosh\alpha(\omega)-\{\cosh^{2}\alpha(\omega)-1-2\rho(\omega)
\sinh\alpha(\omega)\}^{1/2}),
\end{eqnarray*}
and $\widetilde{K}(\omega):=\kappa(\omega)K(\omega)$ respectively with
\[\kappa(\omega):=(1+\rho(\omega)/
\{(1-\varrho(\omega))(1-e^{-\alpha(\omega)})\})\max\{D_{1}(\omega),D_{2}(\omega)\},\]
where
\begin{eqnarray*}
\varrho(\omega)&:=&\rho(\omega) (1+e^{-\alpha(\omega)})/(1-e^{-\alpha(\omega)}),
\\
D_{1}(\omega)&:=&1/\{1-\rho(\omega) e^{-\alpha(\omega)}/(1-e^{-(\alpha(\omega)+\widetilde{\alpha}(\omega))})\},
\\
D_{2}(\omega)&:=&1/\{1-\rho(\omega) e^{-\widetilde{\beta}(\omega)}/(1-e^{-(\alpha(\omega)+\widetilde{\beta}(\omega))})\},
\\
\widetilde{\beta}(\omega)&:=&\widetilde{\alpha}(\omega)+\ln(1+2\rho(\omega) \sinh\alpha(\omega)).
\end{eqnarray*}
Hence, $\Psi_{\xi}$ has a tempered exponential dichotomy on $\widetilde \Omega$ for any $\xi\in \Xi$
with the same exponent $\widetilde{\alpha} \colon \Omega \to (0, +\infty)$ and the same
 bound $\widetilde{K}\colon \Omega \to (0, +\infty)$.

Finally, we check the H\"older continuity of the projection $\widetilde{\Pi}_{\xi}^{s}(\omega)$ in $\xi$,
where $\widetilde{\Pi}_{\xi}^{s}(\omega)$ denotes the projection of the tempered exponential dichotomy for $\Phi_{\xi}$.
For any fixed $\xi_{i}\in \Xi$ for $i=1,2$, by Lemma \ref{lm-Green=Green} we have for $\omega \in  \widetilde{\Omega}$,
\begin{eqnarray*}
 G_{\xi_{2}}(n,\omega)
 \!=\! G_{\xi_{1}}(n,\omega)\!+\!\!\sum_{k=-\infty}^{+\infty} G_{\xi_{1}}(n\!-\!k,\theta^{k}\omega)C_{\xi_{2},\xi_{1}}(\theta^{k-1}\omega)G_{\xi_{2}}(k\!-\!1,\omega),
\end{eqnarray*}
for $n\in \mathbb{Z}$, where $C_{\xi_{2},\xi_{1}}(\omega):=B_{\xi_{2}}(\omega)-B_{\xi_{1}}(\omega)$.
By the definition of the Green function, where we choose $n=0$ in the preceding formula, we get
\begin{eqnarray*}
 \widetilde{\Pi}_{\xi_{2}}(\omega)
 \!=\! \widetilde{\Pi}_{\xi_{1}}(\omega)\!+\!\!\sum_{k=-\infty}^{+\infty} G_{\xi_{1}}(-k,\theta^{k}\omega)C_{\xi_{2},\xi_{1}}(\theta^{k-1}\omega)G_{\xi_{2}}(k\!-\!1,\omega).
\end{eqnarray*}
It follows that
\begin{eqnarray*}
 &&\|\widetilde{\Pi}_{\xi_{2}}(\omega)-\widetilde{\Pi}_{\xi_{1}}(\omega)\|
 \\
 &\leq&
 \sum_{k=-\infty}^{+\infty} \|G_{\xi_{1}}(-k,\theta^{k}\omega)\| \cdot \|C_{\xi_{2},\xi_{1}}(\theta^{k-1}\omega)\| \cdot \|G_{\xi_{2}}(k\!-\!1,\omega)\|
 \\
 &\leq&
 \sum_{k=-\infty}^{+\infty} \kappa(\omega)K(\theta^{k}\omega)e^{-\widetilde{\alpha}(\omega)|k|}
 \|C_{\xi_{2},\xi_{1}}(\theta^{k-1}\omega)\|\kappa(\omega)K(\omega)e^{-\widetilde{\alpha}(\omega)|k|},
\end{eqnarray*}
because $\kappa$ is $\theta$-invariant.
This together with (\ref{Hold-cond}) implies that
\begin{eqnarray*}
 &&\|\widetilde{\Pi}_{\xi_{2}}(\omega)-\widetilde{\Pi}_{\xi_{1}}(\omega)\|
  \\
 &\leq&
  \sum_{k=-\infty}^{+\infty} \kappa(\omega)K(\theta^{k}\omega)e^{-\widetilde{\alpha}(\omega)|k|}
 \frac{\upsilon(\omega)|\xi_{2}-\xi_{1}|^{\sigma}}{K(\theta^{k}\omega)}\kappa(\omega)K(\omega)e^{-\widetilde{\alpha}(\omega)|k|}
 \\
 &=&
  \frac{\kappa(\omega)^{2}\upsilon(\omega)K(\omega)(1+e^{-2\widetilde{\alpha}(\omega)})}{1-e^{-2\widetilde{\alpha}(\omega)}}
  |\xi_{2}-\xi_{1}|^{\sigma}
\end{eqnarray*}
because $\upsilon$ is $\theta$-invariant,
This implies that
$\xi\mapsto \widetilde{\Pi}_{\xi}^{s}(\omega)$ is H\"older continuous with H\"older exponent $\sigma$.
This completes the proof.
\end{proof}

Next, we correspondingly discuss the admissible condition with no Lyapunov norms
for the deterministic difference equation (\ref{lnde-dis}).
For fixed nonnegative constants $\kappa$, $\epsilon$ and $\beta$, let
\begin{eqnarray*}
&&\ell^\infty_{1/\kappa e^{\epsilon\cdot}, \beta}(\Z):=\bigg \{f=(f(n))_{n\in \Z}\subset X: \sup_{n\in \Z} \big (\kappa e^{\epsilon |n|}
e^{-\beta n}|f(n)|\big )<+\infty \bigg \},
\\
&&\ell^\infty_{|1/\kappa e^{\epsilon\cdot}, \beta|}(\Z):=\bigg \{f=(f(n))_{n\in \Z}\subset X: \sup_{n\in \Z} \big (\kappa e^{\epsilon |n|}
e^{-\beta|n|}|f(n)|\big )<+\infty \bigg \}.
\end{eqnarray*}
Then both $\ell^\infty_{1/\kappa e^{\epsilon\cdot}, \beta}(\Z)$ and $\ell^\infty_{|1/\kappa e^{\epsilon\cdot}, \beta|}(\Z)$
are Banach spaces equipped the norms
\begin{eqnarray*}
&&
|f|_{\ell^\infty_{1/\kappa e^{\epsilon\cdot}, \beta}(\Z)}:= \sup_{n\in \Z} \big (\kappa e^{\epsilon |n|} e^{-\beta n}|f(n)|\big )
\ {\rm and}
 \\
&&
|f|_{|\ell^\infty_{1/\kappa e^{\epsilon\cdot}, \beta}(\Z)|}:= \sup_{n\in \Z} \big (\kappa e^{\epsilon |n|} e^{-\beta|n|}|f(n)|\big )
\end{eqnarray*}
 respectively.
In the particular case $\kappa=1$, $\epsilon=0$,
we use $\ell^\infty_{1, \beta}(\Z)$ and $\ell^\infty_{|1, \beta|}(\Z)$
to denote $\ell^\infty_{1/\kappa e^{\epsilon\cdot}, \beta}(\Z)$
 and $\ell^\infty_{|1/\kappa e^{\epsilon\cdot}, \beta|}(\Z)$ respectively.


\begin{cor}\label{DetmS}
Suppose that there are positive constant $\kappa$, $\epsilon$ and $\beta$ such that
the three pairs
 $(\ell^\infty_{1/\kappa e^{\epsilon\cdot}, \lambda}(\Z),\ell^\infty_{1, \lambda}(\Z))$ for
 $\lambda=\pm \beta$,
 and $(\ell^\infty_{|1/\kappa e^{\epsilon\cdot}, \beta|}(\Z),\ell^\infty_{|1, \beta|}(\Z))$
 are properly admissible for Eq.(\ref{lnde-dis}).
Then Eq.(\ref{lnde-dis}) has a nonuniform exponential dichotomy with bound $\tilde{\kappa}e^{\epsilon|n|}$
for some positive constant $\tilde{\kappa}$.
\end{cor}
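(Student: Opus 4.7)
The plan is to retrace the proof of Theorem \ref{theorem-admi-texd} in the deterministic setting, with the role of the tempered factor $K(\theta^n\omega)$ played by the subexponential weight $\kappa e^{\epsilon|n|}$ and with the $\omega$--dependence suppressed. Throughout, $\lambda$ will range over $\{\beta,-\beta\}$, and $U(n,m):=A(n-1)\cdots A(m)$ for $n>m$ with $U(m,m):=\Id$.

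First I would introduce, for each $\lambda$, the linear operator
\[
L_\lambda\colon \mathcal D(L_\lambda)\subset \ell^\infty_{1,\lambda}(\Z)\to \ell^\infty_{1/\kappa e^{\epsilon\cdot},\lambda}(\Z),\quad (L_\lambda x)(n):=x(n)-A(n-1)x(n-1),
\]
and verify that it is closed by the same coordinate-wise argument as in Lemma \ref{CO}. The hypothesized proper admissibility of $(\ell^\infty_{1/\kappa e^{\epsilon\cdot},\lambda}(\Z),\ell^\infty_{1,\lambda}(\Z))$ means $L_\lambda$ is a bijection, so by the closed graph theorem $L_\lambda^{-1}$ is bounded; set $\gamma_\lambda:=\|L_\lambda^{-1}\|$. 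For $z\in X$ and $k\in\Z$ let $\tilde f_{z,k}$ be the delta-sequence defined in \eqref{def-td-f-zk}, and put $G_\lambda(n,k)z:=(L_\lambda^{-1}\tilde f_{z,k})(n)$. Copying the computation leading to \eqref{G}, one obtains
\[
\|G_\lambda(n,k)\|\le \gamma_\lambda\,\kappa\, e^{\epsilon|k|}\,e^{\lambda(n-k)},\qquad n,k\in\Z.
\]

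Next I would transcribe Lemma \ref{L2}: for any $z\in X$, the sequence $y:=L_\beta^{-1}\tilde f_{z,k}-L_{-\beta}^{-1}\tilde f_{z,k}$ satisfies the homogeneous recursion $y(n)=A(n-1)y(n-1)$ and belongs to $\ell^\infty_{|1,\beta|}(\Z)$ (since both summands lie there). Applying the proper admissibility of the third pair $(\ell^\infty_{|1/\kappa e^{\epsilon\cdot},\beta|}(\Z),\ell^\infty_{|1,\beta|}(\Z))$ to the zero input forces $y\equiv 0$, so $G_\beta=G_{-\beta}=:G$. Then I would define $P^s(m):=G(m,m)$ and repeat Lemmas \ref{lm-SU-subspace}--\ref{lm-Green-funct} mechanically to establish that $P^s(m)$ is the projection of $X$ onto the stable fibre $\mathcal S(m):=\{v:\sup_{n\ge m}|U(n,m)v|<\infty\}$ along the unstable fibre $\mathcal U(m)$, that $A(m)|_{\mathcal U(m)}$ is an isomorphism onto $\mathcal U(m+1)$, and that
\[
G(n,m)=\begin{cases} U(n,m)P^s(m), & n\ge m,\\[2pt] -U(n,m)(\Id-P^s(m)), & n<m. \end{cases}
\]

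Finally, combining the last display with the bound on $\|G_\lambda(n,m)\|$ (choosing $\lambda=-\beta$ when $n\ge m$ and $\lambda=\beta$ when $n\le m$) yields
\[
\|U(n,m)P^s(m)\|\le \tilde\kappa\, e^{\epsilon|m|}\,e^{-\beta(n-m)}\ (n\ge m),\qquad \|U(n,m)(\Id-P^s(m))\|\le \tilde\kappa\, e^{\epsilon|m|}\,e^{-\beta(m-n)}\ (n\le m),
\]
with $\tilde\kappa:=\kappa\max\{\gamma_\beta,\gamma_{-\beta}\}$, which is exactly a nonuniform exponential dichotomy with bound $\tilde\kappa e^{\epsilon|m|}$. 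Nothing genuinely new is needed; the only point that requires care is checking that the weight $\kappa e^{\epsilon|n|}$ slots into the Green-function estimates in the same way as the tempered random variable $K(\theta^n\omega)$ did, and this is automatic because the geometric series appearing in \eqref{de} absorb any subexponential prefactor without modification.
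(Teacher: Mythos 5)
Your proof is correct and follows exactly the route the paper itself indicates (and mostly leaves implicit): the paper's own "proof'' consists precisely of invoking Remark~\ref{re-assum-gamma} to get the automatic bounds $\gamma_\lambda=\|L_\lambda^{-1}\|$ and then declaring that one repeats the proof of Theorem~\ref{theorem-admi-texd} with $K(\theta^n\omega)$, $\lambda(\omega)$, $A(\theta^n\omega)$, $U_\omega(n,m)$, $L_{\lambda,\omega}$, $G_{\lambda,\omega}(n,k)$ replaced by $\kappa e^{\epsilon|n|}$, $\lambda$, $A(n)$, $U(n,m)$, $L_\lambda$, $G_\lambda(n,k)$. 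Your transcription of that substitution, including the Green-function estimate $\|G_\lambda(n,k)\|\le\gamma_\lambda\kappa e^{\epsilon|k|}e^{\lambda(n-k)}$, the identification $G_\beta=G_{-\beta}$ via the third admissible pair, and the choice $\tilde\kappa=\kappa\max\{\gamma_\beta,\gamma_{-\beta}\}$, is the intended argument.
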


We omit the detail of the proof of Corollary \ref{DetmS}.
Actually,
by Remark \ref{re-assum-gamma},
we see that the proper admissibility of the pair $(\ell^\infty_{1/\kappa e^{\epsilon\cdot}, \lambda}(\Z),\ell^\infty_{1, \lambda}(\Z))$
for $\lambda=\pm \beta$ implies that
there is a positive constant $\gamma_{\lambda}$
 such that
\begin{equation*}
\|x\|_{\ell^\infty_{1, \lambda}(\Z)}
\leq \gamma_{\lambda}\|f\|_{\ell^\infty_{1/\kappa e^{\epsilon\cdot}, \lambda}(\Z)}
\end{equation*}
for $f=(f(n))_{n\in \Z}\in \ell^\infty_{1/\kappa e^{\epsilon\cdot}, \lambda}(\Z)$,
where $x=(x(n))_{n\in \Z}$ is the unique solution of~\eqref{non-lnde-dis} in
 $\ell^\infty_{1, \lambda}(\Z)$.
Hence,
replacing $K(\theta^{n}\omega)$, $\lambda(\omega)$, $\beta(\omega)$, $A(\theta^{n}\omega)$, $U_{\omega}(n,m)$,
$L_{\lambda, \omega}$ and $G_{\lambda, \omega}(n, k)$
with
$\kappa e^{\epsilon |n|}$, $\lambda$, $\beta$, $A(n)$, $U(n,m)$,
$L_{\lambda}$ and $G_{\lambda}(n, k)$ respectively in the proof of Theorem \ref{theorem-admi-texd},
one can check that Eq.(\ref{lnde-dis}) has a nonuniform exponential dichotomy with the bound $\tilde{\kappa} e^{\epsilon|n|}$
for a constant $\tilde{\kappa}>0$.


\section{Acknowledgements}
The authors are ranked in alphabetic order of their names and their contributions should be regarded equally.
D. D  was supported in part by Croatian Science Foundation under the Project IP-2019-04-1239 and by the University of Rijeka under the Projects uniri-prirod-18-9 and uniri-prprirod-19-16.
W. Z was supported by NSFC \#11771307, \#11831012 and \#11821001.
L. Z was supported by NSFC \#12171337 and \#11771308.



\vskip 0.4cm

\noindent {\bf Appendix: Four Important Lemmas}

For convenience,
we collect the statements of the Luzin's theorem,
Poincare's reccurence theorem, Birkhoff's ergodic theorem and Kac's Lemma used in the proof of Theorem~\ref{t2}
in this appendix.

{\bf Lemma A1.}
{\bf (Luzin's theorem, \cite[Theorem 7.10, p.217]{Folland})}
{\it  Suppose that $\Omega$ is a locally compact Hausdorff space,
 $\mu$ is Radon measure on $\Omega$,
and $f: \Omega\rightarrow \mathbb{C}$ is measurable which vanishes outside a set of finite measure.
 Then for any fixed $\epsilon>0$ there exists a Borel set $E\subset \Omega$
 and a function $g\in C_{c}(\Omega)$ such that
$ f=g \ {\rm on}\ E\ {\rm and}\ \mu(\Omega\setminus E)<\epsilon,$
where $C_{c}(\Omega)$ is the class of continuous functions on $\Omega$ with compact support.
}

A Radon
measure $\mu$ on $\Omega$ is a Borel measure that is finite on all compact sets,
outer regular on all Borel sets (i.e., $\mu(E)=\inf\{\mu(U):U\supset E, U \ {\rm is \ open}\}$ for any Borel set $E$),
and inner regular on all open sets (i.e., $\mu(E)=\sup\{\mu(U):U\subset E, U \ {\rm is \ compact}\}$ for any open set $E$).
Note that a compact metric space is second countable.
Hence, by  \cite[Theorem 7.8, p.217]{Folland},
if $\Omega$ is a compact metric space then a Borel probability measure on $\Omega$ is a Radon measure.

{\bf Lemma A2.}
{\bf (Poincare's reccurence theorem, \cite[Theorem 4.1.19, p.142]{Katok-Hasselblatt-1995})}
{\it Suppose that $(\Omega, \mathcal F, \mathbb P)$ is a probability space,
$\theta:\Omega\rightarrow \Omega$ is measurable $\mathbb P$-preserving map,
and $E\in \mathcal F$.
Then
$ \mathbb P(\{\omega\in E| \{\theta^{n}\omega\}_{n\geq N}\subset \Omega\setminus E\})=0$
for any $N\in \mathbb{Z}_{+}$.
}

Under the assumptions of Poincare's reccurence theorem,
it follows that
for any fixed $E\in \mathcal F$ with $\mathbb P(E)>0$ there exists $F\subset E$ with $\mathbb P(F)=\mathbb P(E)$
such that for any $x\in F$ there exists a strict increasing sequence $\{n_{i}\}_{i}^{+\infty}\subset \mathbb{Z}_{+}$
with $\theta^{n_{i}}x\in F$.

{\bf Lemma A3.}
{\bf (Birkhoff's ergodic theorem, \cite[Theorem 4.1.2, p.136]{Katok-Hasselblatt-1995})}
{\it Suppose that $(\Omega, \mathcal F, \mathbb P)$ is a probability space,
$\theta:\Omega\rightarrow \Omega$ a measurable $\mathbb P$-preserving map
and $\varphi\in \mathcal{L}^{1}(\Omega,\mathbb P)$.
Then the limit
$$
\tilde{\varphi}(\omega):=\lim_{n\rightarrow \infty}\frac{1}{n}\sum_{k=0}^{n-1}\varphi(\theta^{k}\omega)
$$
exists $\mathbb P$-a.e. $\omega\in \Omega$ and
$\tilde{\varphi}$ is $\mathbb P$-integrable
and $\theta$-invariant
and satisfies
\begin{eqnarray*}
 \int_{\Omega}\tilde{\varphi}(\omega) d\mathbb P=\int_{\Omega}\varphi(\omega) d\mathbb P.
\end{eqnarray*}
}


{\bf Lemma A4.}
{\bf (Kac's lemma, \cite[Theorem 2', p.1006]{Kac-1947})}
\label{lm-Kac}
{\it Suppose that $f$ is a measure preserving map on a measure space $(\Omega,\mathcal{F},\mu)$ and
 let $n_{A}(x) := \inf\{k\geq 1 : f^{k}(x)\in A, x\in A\}$ for any fixed positive measure set $A\in \mathcal{F}$.
 Then
$ \int_{A}n_{A}(x)d\mu_A={1}/{\mu(A)},
$
where $\mu_{A}(\cdot):= \mu(\cdot\cap A)/\mu(A)$ denotes the normalized restriction of $\mu$ to $A$.
}


\end{document}